\newtheorem{thm}{Theorem}[section]
\newtheorem{cor}[thm]{Corollary}
\newtheorem{prop}[thm]{Proposition}
\newtheorem{lemma}[thm]{Lemma}
\theoremstyle{remark}
\newtheorem{remark}[thm]{Remark}
\newtheorem*{notation}{Notation}
\theoremstyle{definition}
\newtheorem{definition}[thm]{Definition}
\newtheorem{conj}[thm]{Conjecture}
\newcommand{\op}{\mathtt{op}}
\newcommand{\id}{{\mathtt{Id}}}
\newcommand{\dbar}{\overline{\partial}}
\newcommand\dual[1]{{#1}^{\vee}}
\newcommand{\Conn}{\mathbb{\nabla}}
\newcommand{\poisson}{\pi}
\newcommand{\EXT}{\mathcal{E\!\scriptstyle{XT}}}
\DeclareMathOperator{\Ad}{Ad}
\DeclareMathOperator{\gr}{gr}
\DeclareMathOperator{\DQ}{\mathfrak{DQ}}
\DeclareMathOperator{\DQA}{DQ-alg}
\DeclareMathOperator{\CONN}{Conn}
\DeclareMathOperator{\Hom}{Hom}
\DeclareMathOperator{\Pic}{Pic}
\DeclareMathOperator{\shHom}{\underline{Hom}}
\DeclareMathOperator{\shEnd}{\underline{End}}
\DeclareMathOperator{\shPic}{\underline{Pic}}
\DeclareMathOperator{\rk}{rk}
\DeclareMathOperator{\ad}{ad}
\DeclareMathOperator{\pr}{pr}
\begin{document}
\title{On quasi-classical limits of DQ-algebroids}

\author[P.Bressler]{Paul Bressler}
\email{paul.bressler@gmail.com}
\address {Departamento de Matem\'aticas, Universidad de Los Andes, Bogot\'a, Colombia}

\author[A.Gorokhovsky]{Alexander Gorokhovsky}
\email{Alexander.Gorokhovsky@colorado.edu}
\address{Department of Mathematics, UCB 395,
University of Colorado, Boulder, CO~80309-0395, USA}

\author[R.Nest]{Ryszard Nest}
\email{rnest@math.ku.dk}
\address{Department of Mathematics,
Copenhagen University, Universitetsparken 5, 2100 Copenhagen, Denmark}

\author[B.Tsygan]{Boris Tsygan}
\email{b-tsygan@northwestern.edu}
\address{Department of
Mathematics, Northwestern University, Evanston, IL 60208-2730, USA}

\classification{53D55}

\keywords{algebroid, deformation quantization, classical limit}

\thanks{A.~Gorokhovsky was partially supported by NSF grant DMS-0900968. B.~Tsygan was partially
supported by NSF grant DMS-0906391. R.~Nest was supported by the Danish National Research Foundation through the Centre for Symmetry and Deformation (DNRF92).}

\begin{abstract}
We determine the additional structure which arises on the classical limit of a DQ-algebroid.
\end{abstract}

\maketitle

\section{Introduction}
As is well-known, a one-parameter formal deformation of a commutative algebra gives rise to a Poisson bracket on the latter. The resulting Poisson algebra is usually referred to as the quasi-classical limit of the deformation. This circumstance is responsible for the link between deformation quantization and Poisson geometry. Although deformation quantization had been developed in the context of (sheaves of) algebras and deformations thereof, it became apparent from the work of M.~Kontsevich (\cite{K01}) and M.~Kashiwara (\cite{Kash}) that the broader context of algebroid stacks provides a more natural setting for the theory leading to the notion of a DQ-algebroid due to M.~Kashiwara and P.~Schapira (\cite{KS}).

Unlike the more traditional setting, the classical limit of a DQ-algebroid on a manifold $X$ is not the structure sheaf $\mathcal{O}_X$\footnote{Here and below we consider complex valued functions.} but, rather, a twisted form thereof which is, in essence, a linear version of a $\mathcal{O}^\times_X$-gerbe. The purpose of the present note is to determine the additional structure which arises on the classical limit of a DQ-algebroid and reflects the non-commutative aspect of the latter in the way in which the Poisson bracket reflects non-commutativity of the deformation quantization of $\mathcal{O}_X$. We shall presently describe our answer.

A DQ-algebroid $\mathcal{C}$ gives rise to \emph{the associated Poisson bi-vector $\pi^\mathcal{C}$} on $X$, hence to a Lie algebroid structure on the cotangent sheaf $\Omega^1_X$ which we denote $\Pi^\mathcal{C}$. The notion of a connective structure with curving on a $\mathcal{O}^\times_X$-gerbe readily generalizes to the notion of a $\Pi^\mathcal{C}$-connective structure with curving (and, in fact, to the notion of a $\mathcal{B}$-connective structure with curving for any Lie algebroid $\mathcal{B}$). The principal result of this note is the construction of a canonical $\Pi^\mathcal{C}$-connective structure with flat curving on the classical limit of the DQ-algebroid $\mathcal{C}$. As an application we show (Theorem \ref{thm: obst and gemuese}) that an $\mathcal{O}^\times_X$-gerbe $\mathcal{S}$ admits a deformation with the associated Poisson bi-vector $\pi$ only if the class $[\mathcal{S}]\in H^2(X;\mathcal{O}^\times_X)$ lifts to Deligne--Poisson cohomology $H^2(X;\mathcal{O}^\times_X\to\Omega^1_\Pi\to\Omega^{2,cl}_\Pi)$, where $\Pi$ is the Lie algebroid (structure on $\Omega^1_X$) determined by $\pi$. Similar ideas have been used in \cite{BGKP} to give a necessary and sufficient condition for a line bundle on a lagrangian subvariety of a symplectic manifold to admit a quantization.

In \cite{B02} (see also \cite{BW04}), the canonical flat contravariant connection on the classical limit of a bi-module over a pair of star-product algebras is constructed. Our variant of this construction (see \ref{subsection: Quasi-classical limits of bi-modules}) is a key component of the present work. 

We work in the natural generality of a $C^\infty$ manifold $X$ equipped with an integrable complex distribution $\mathcal{P}$ satisfying additional technical conditions. Subsumed thereby are the case of plain $C^\infty$ manifold $X$ ($\mathcal{P}$ trivial) as well as $X$ a complex analytic manifold ($\mathcal{P}$ a complex structure).

The paper is organized as follows. Relevant facts about calculus in presence of an integrable distribution are recalled in Section \ref{section: distributions}. Section \ref{section: algebroid stacks} contains basic facts about torsors and gerbes and establishes notation. In Section \ref{section: lie algebroids} we review the theory of Lie algebroids and modules. In particular, we describe the cohomological classification of invertible modules over a Lie algebroid which may be of independent interest. Section \ref{section: connective structures} is devoted to generalization of the notions of connective structures and curving to the Lie algebroid setting. In Section \ref{section: DQ-algebras} we review basic facts on deformation quantization algebras and bi-modules and define the \emph{subprincipal curvature}. Section \ref{section: qcl} contains the necessary background material on DQ-algebroids and the principal result of the present note -- the construction of the quasi-classical limit of a DQ-algebroid. In Section \ref{section: formality} we discuss the relationship between the construction of the quasi-classical limit and the quasi-classical description of \cite{BGNT1} of the category of DQ-algebroids provided by the formality theorem. Section \ref{section: formality} concludes with a conjecture on the precise nature of the aforementioned relationship.

It is the authors' pleasure to express their gratitude to the referee for thorough reading of the original manuscript and thoughtful comments and suggestions.

\section{Calculus in presence of an integrable distribution}\label{section: distributions}
In this section we briefly review basic facts regarding differential calculus in the presence of an integrable complex distribution. We refer the reader to \cite{Kostant}, \cite{Rawnsley} and \cite{FW} for details and proofs.

For a $C^\infty$ manifold $X$ we denote by $\mathcal{O}_X$ (respectively, $\Omega^i_X$) the sheaf of \emph{complex valued} $C^\infty$ functions (respectively, differential forms of degree $i$) on $X$. Throughout this section we denote by $\mathcal{T}_X^\mathbb{R}$ the sheaf of \emph{real valued} vector fields on $X$. Let $\mathcal{T}_X := \mathcal{T}_X^\mathbb{R}\otimes_\mathbb{R}\mathbb{C}$.

\subsection{Complex distributions}
A \emph{(complex) distribution} on $X$ is a sub-bundle\footnote{A
sub-bundle is an $\mathcal{O}_X$-submodule which is a direct
summand locally on $X$.} of ${\mathcal T}_X$.

A distribution $\mathcal{P}$ is called \emph{involutive} if it is closed under the Lie bracket, i.e. $[\mathcal{P},\mathcal{P}] \subseteq \mathcal{P}$.

For a distribution $\mathcal{P}$ on $X$ we denote by $\mathcal{P}^\perp \subseteq \Omega^1_X$ the annihilator of $\mathcal{P}$ (with respect to the canonical duality pairing).

A distribution $\mathcal{P}$ of rank $r$ on $X$ is called \emph{integrable} if, locally on $X$, there exist functions $f_1,\ldots , f_r\in\mathcal{O}_X$ such that $df_1,\ldots , df_r$ form a local frame for $\mathcal{P}^\perp$.

It is easy to see that an integrable distribution is involutive. The converse is true when $\mathcal{P}$ is \emph{real}, i.e. $\overline{\mathcal{P}} = \mathcal{P}$ (Frobenius) and when $\mathcal{P}$ is a \emph{complex structure}, i.e. $\overline{\mathcal{P}} \cap \mathcal{P} =0$ and $\overline{\mathcal{P}} \oplus \mathcal{P}
= \mathcal{T}_X$ (Newlander-Nirenberg). More generally, according to Theorem 1 of \cite{Rawnsley}, a sufficient condition for integrability of a complex distribution $\mathcal{P}$ is
\begin{equation}\label{sufficient condition}
\text{$\mathcal{P}\cap\overline{\mathcal{P}}$ is a sub-bundle and both $\mathcal{P}$ and $\mathcal{P} + \overline{\mathcal{P}}$ are involutive.}
\end{equation}

\subsection{The Hodge filtration}\label{subsection: the hodge filtration}
Suppose that $\mathcal{P}$ is an involutive distribution on $X$.

Let $F_\bullet\Omega^\bullet_X$ denote the filtration by the powers of the differential ideal generated by $\mathcal{P}^\perp$, i.e. $F_{-i}\Omega^j_X = \bigwedge^i{\mathcal
P}^\perp\wedge\Omega^{j-i}_X\subseteq\Omega^j_X$. Let $\dbar$ denote the differential in $Gr^F_\bullet\Omega^\bullet_X$. The wedge product of
differential forms induces a structure of a commutative DGA on
$(Gr^F_\bullet\Omega^\bullet_X,\dbar)$.

In particular, $Gr^F_0\mathcal{O}_X = \mathcal{O}_X$, $Gr^F_0\Omega^1_X = \Omega^1_X/\mathcal{P}^\perp$ and $\dbar\colon \mathcal{O}_X \to Gr^F_0\Omega^1_X$ is equal to the composition $\mathcal{O}_X \xrightarrow{d} \Omega^1_X \to \Omega^1_X/\mathcal{P}^\perp$. Let $\mathcal{O}_{X/\mathcal{P}} := \ker(\mathcal{O}_X \xrightarrow{\dbar} Gr^F_0\Omega^1_X)$. Equivalently, $\mathcal{O}_{X/\mathcal{P}} = (\mathcal{O}_X)^\mathcal{P} \subset \mathcal{O}_X$, the subsheaf of functions locally constant along $\mathcal{P}$. Note that $\dbar$ is $\mathcal{O}_{X/\mathcal{P}}$-linear.

Theorem 2 of \cite{Rawnsley} says that, if $\mathcal{P}$ satisfies the condition \eqref{sufficient condition}, the higher $\dbar$-cohomology of $\mathcal{O}_X$ vanishes, i.e.
\begin{equation}\label{d-bar cohomology of functions}
H^i(Gr^F_0\Omega^\bullet_X,\dbar) = \left\{
\begin{array}{ll}
\mathcal{O}_{X/\mathcal{P}} & \text{if $i=0$} \\
0 & \text{otherwise.}
\end{array}
\right.
\end{equation}

\medskip

\noindent
In what follows we will assume that the complex distribution $\mathcal{P}$ under consideration is integrable and satisfies \eqref{d-bar cohomology of functions}. The latter is implied by the condition \eqref{sufficient condition}.

\subsection{$\dbar$-operators} Suppose that $\mathcal{E}$ is a vector bundle on $X$, i.e. a locally free $\mathcal{O}_X$-module of finite rank. A \emph{connection along $\mathcal{P}$} on
$\mathcal{E}$ is, by definition, a map $\nabla^\mathcal{P}:
\mathcal{E}\to\Omega^1_X/\mathcal{P}^\perp\otimes_{{\mathcal
O}_X}\mathcal{E}$ which satisfies the Leibniz rule
$\nabla^\mathcal{P}(fe)=f\nabla^\mathcal{P}(e)+\overline\partial
f\cdot e$ for $e\in \mathcal{E}$ and $f \in \mathcal{O}_X$. Equivalently, a connection along $\mathcal{P}$ is an
$\mathcal{O}_X$-linear map $\nabla^\mathcal{P}_{(\bullet)}\colon \mathcal{P} \to \shEnd_\mathbb{C}(\mathcal{E})$ which satisfies the Leibniz rule $\nabla^\mathcal{P}_\xi(fe)=
f\nabla^\mathcal{P}_\xi(e)+\overline\partial f\cdot e$ for $e\in \mathcal{E}$ and $f \in \mathcal{O}_X$. In particular, $\nabla^\mathcal{P}_\xi$ is $\mathcal{O}_{X/\mathcal{P}}$-linear. The two avatars of a connection along $\mathcal{P}$
are related by $\nabla^\mathcal{P}_\xi(e)=\iota_\xi\nabla^\mathcal{P}(e)$.

A connection along $\mathcal{P}$ on $\mathcal{E}$ is called \emph{flat} if the corresponding map $\nabla^\mathcal{P}_{(\bullet)}\colon \mathcal{P} \to \shEnd_\mathbb{C}(\mathcal{E})$ is a morphism of Lie algebras. We will refer to a flat connection along $\mathcal{P}$ on
$\mathcal{E}$ as a $\dbar$-operator on $\mathcal{E}$.

A connection on $\mathcal{E}$ along $\mathcal{P}$ extends uniquely to a derivation $\dbar_\mathcal{E}$ of the graded $Gr^F_0\Omega^\bullet_X$-module $Gr^F_0\Omega^\bullet_X\otimes_{\mathcal{O}_X}\mathcal{E}$ which is a $\dbar$-operator if and only if $\dbar_{\mathcal E}^2=0$. The complex $(Gr^F_0\Omega^\bullet_X\otimes_{{\mathcal O}_X}\mathcal{E}, \dbar_\mathcal{E})$ is referred to as the (corresponding) $\dbar$-complex. Since $\dbar_\mathcal{E}$ is $\mathcal{O}_{X/\mathcal{P}}$-linear, the sheaves $H^i(Gr^F_0\Omega^\bullet_X\otimes_{{\mathcal O}_X}{\mathcal E},\dbar_\mathcal{E})$ are $\mathcal{O}_{X/\mathcal{P}}$-modules. The vanishing of higher $\dbar$-cohomology of $\mathcal{O}_X$ \eqref{d-bar cohomology of functions} generalizes easily to vector bundles.

\begin{lemma}\label{dbar lemma}
Suppose that $\mathcal{E}$ is a vector bundle and $\dbar_\mathcal{E}$ is a $\dbar$-operator on $\mathcal{E}$. Then, $H^i(Gr^F_0\Omega^\bullet_X \otimes_{\mathcal{O}_X}\mathcal{E}, \dbar_\mathcal
{E})=0$ for $i\neq 0$, i.e. the $\dbar$-complex is a resolution of $\ker(\dbar_\mathcal{E})$. Moreover, $\ker(\dbar_\mathcal{E})$ is locally free over $\mathcal{O}_{X/\mathcal{P}}$ of rank $\rk_{\mathcal{O}_X}\mathcal{E}$ and the map $\mathcal{O}_X \otimes_{\mathcal{O}_{X/\mathcal{P}}} \ker(\dbar_\mathcal{E}) \to \mathcal{E}$ (the $\mathcal{O}_X$-linear extension of the
inclusion $\ker(\dbar_\mathcal{E}) \hookrightarrow \mathcal{E}$) is an isomorphism.
\end{lemma}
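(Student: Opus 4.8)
The plan is to reduce all three assertions to the scalar case \eqref{d-bar cohomology of functions} by exhibiting, locally on $X$, a frame of $\mathcal{E}$ consisting of $\dbar_\mathcal{E}$-flat sections. Since the formation of the cohomology sheaves, the local freeness of $\ker(\dbar_\mathcal{E})$, and the bijectivity of the comparison map are all local questions, I fix a point of $X$ and work in a neighborhood over which $\mathcal{E}$ is trivial of rank $r = \rk_{\mathcal{O}_X}\mathcal{E}$.

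The main step is to start from an arbitrary local frame $(e_1,\dots,e_r)$ of $\mathcal{E}$ and write $\nabla^\mathcal{P} e_j = \sum_i \omega_{ij}\, e_i$ with connection matrix $\omega = (\omega_{ij}) \in M_r(Gr^F_0\Omega^1_X)$. I seek an invertible $g \in GL_r(\mathcal{O}_X)$ for which the sections $s_j = \sum_i e_i\, g_{ij}$ satisfy $\nabla^\mathcal{P} s_j = 0$; a short Leibniz-rule computation shows this amounts to solving the matrix equation $\dbar g + \omega g = 0$, and that the defining condition $\dbar_\mathcal{E}^2 = 0$ of a $\dbar$-operator is exactly the integrability condition $\dbar\omega + \omega\wedge\omega = 0$, which is the consistency condition for this system. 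Solving it is a Frobenius-type integrability statement for a flat connection along the integrable distribution $\mathcal{P}$, and this is the main obstacle. In the rank-one case it reduces directly to \eqref{d-bar cohomology of functions}: there $\omega$ is $\dbar$-closed, hence locally $\omega = \dbar(-h)$ for some $h \in \mathcal{O}_X$, and $g = e^{h}$ solves the equation. The general case is the non-abelian enhancement, where the non-commutativity of $M_r(\mathcal{O}_X)$ precludes a one-line exponentiation and one must exploit the integrability of $\mathcal{P}$ together with \eqref{d-bar cohomology of functions}.

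Given such a flat frame $(s_1,\dots,s_r)$, the remaining assertions are formal. The $\mathcal{O}_X$-linear isomorphism $\mathcal{O}_X^{\oplus r} \xrightarrow{\sim} \mathcal{E}$ sending the $j$-th basis vector to $s_j$ intertwines the diagonal operator $\dbar^{\oplus r}$ with $\dbar_\mathcal{E}$, because $\dbar_\mathcal{E}(\sum_j a_j s_j) = \sum_j \dbar a_j \otimes s_j$ once $\nabla^\mathcal{P} s_j = 0$. Hence the $\dbar$-complex of $\mathcal{E}$ is isomorphic to $r$ copies of the scalar complex $(Gr^F_0\Omega^\bullet_X,\dbar)$, and \eqref{d-bar cohomology of functions} yields at once $H^i(Gr^F_0\Omega^\bullet_X\otimes_{\mathcal{O}_X}\mathcal{E},\dbar_\mathcal{E}) = 0$ for $i \neq 0$ together with $\ker(\dbar_\mathcal{E}) = H^0 \cong \mathcal{O}_{X/\mathcal{P}}^{\oplus r}$, the latter free over $\mathcal{O}_{X/\mathcal{P}}$ on $s_1,\dots,s_r$; this proves the first two assertions. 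Finally, in this frame the comparison map $\mathcal{O}_X \otimes_{\mathcal{O}_{X/\mathcal{P}}} \ker(\dbar_\mathcal{E}) \to \mathcal{E}$ becomes the identity $\mathcal{O}_X^{\oplus r} \to \mathcal{O}_X^{\oplus r}$, hence an isomorphism; since the map is globally defined and the statement is local, this settles the third assertion.
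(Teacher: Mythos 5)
Your reduction is sound in both directions of the easy implications, and the endgame is correct: once a local $\dbar_\mathcal{E}$-flat frame exists, the $\dbar$-complex of $\mathcal{E}$ splits into $r$ copies of the scalar complex and all three assertions follow from \eqref{d-bar cohomology of functions}. But the existence of that flat frame is precisely where your proof stops. You correctly identify that it amounts to solving $\dbar g = -\omega g$ with $g \in GL_r(\mathcal{O}_X)$, given the integrability $\dbar\omega + \omega\wedge\omega = 0$, you solve it for $r=1$ by the exponential trick, and then for $r \geq 2$ you say only that ``one must exploit the integrability of $\mathcal{P}$ together with \eqref{d-bar cohomology of functions}'' --- that is a statement of intent, not an argument. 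This step is not a technicality: it is essentially equivalent to the lemma itself (a flat frame is exactly a local basis of $\ker(\dbar_\mathcal{E})$ over $\mathcal{O}_{X/\mathcal{P}}$ trivializing the comparison map), so the proposal in effect restates the lemma rather than proving it. Moreover, the non-abelian problem genuinely cannot be obtained from the scalar vanishing by soft algebra: when $\mathcal{P}$ is a complex structure, the existence of flat frames for a $\dbar$-operator is the Koszul--Malgrange theorem (integrability of holomorphic structures), whose known proofs require analytic input of Newlander--Nirenberg/elliptic type, not merely the scalar Dolbeault--Grothendieck lemma. There is no decoupling of the system $\dbar s + \omega s = 0$, no induction on rank that produces even one nowhere-vanishing flat section, and no convergence argument; any of these would have to be supplied.

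For comparison: the paper itself gives no proof of this lemma --- it asserts that \eqref{d-bar cohomology of functions} ``generalizes easily to vector bundles'' and refers to \cite{Kostant}, \cite{Rawnsley} and \cite{FW}. So your outline is a reasonable skeleton of what such a proof would look like, but to complete it you would either have to invoke the bundle-level results of those references directly, or prove the flat-frame theorem in the generality used here: locally put $X$ in a normal form adapted to $\mathcal{P}$ (using integrability and condition \eqref{sufficient condition}), trivialize the flat partial connection along the real directions of $\mathcal{P}\cap\overline{\mathcal{P}}$ by parallel transport (Frobenius), and then solve a parametrized matrix $\dbar$-problem along the complex directions (Koszul--Malgrange with smooth dependence on parameters). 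As written, the proposal has a genuine gap at its central step.
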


\begin{remark}
Suppose that $\mathcal{F}$ is a locally free $\mathcal{O}_{X/\mathcal{P}}$-module of finite rank. Then, $\mathcal{O}_X\otimes_{\mathcal{O}_{X/\mathcal{P}}}\mathcal{F}$ is a locally free $\mathcal{O}_X$-module of rank $\rk_{\mathcal{O}_{X/\mathcal{P}}}\mathcal{F}$ and is endowed in a canonical way with a $\dbar$-operator, namely, $\dbar\otimes\id$.
The assignments $\mathcal{F}\mapsto(\mathcal{O}_X\otimes_{\mathcal{O}_{X/\mathcal{P}}}\mathcal{F},\dbar\otimes\id)$ and $(\mathcal{E},\dbar_\mathcal{E}) \mapsto \ker(\dbar_\mathcal{E})$ are mutually inverse equivalences of suitably defined categories.
\end{remark}

\subsection{Calculus}\label{subsection: calculus}
The adjoint action of $\mathcal{P}$ on $\mathcal{T}_X$ preserves $\mathcal{P}$, hence descends to an action on $\mathcal{T}_X/\mathcal{P}$. The latter action defines a connection along $\mathcal{P}$, i.e. a canonical $\dbar$-operator on $\mathcal{T}_X/\mathcal{P}$ which is easily seen to coincide with the one induced via the duality pairing between the latter and $\mathcal{P}^\perp$.\footnote{In the case of a real polarization this connection is known as the Bott connection.} Let $\mathcal{T}_{X/\mathcal{P}} := (\mathcal{T}_X/\mathcal{P})^\mathcal{P}$ (the subsheaf of $\mathcal{P}$ invariant section, equivalently, the kernel of the $\dbar$-operator on $\mathcal{T}_X/\mathcal{P}$. The Lie bracket on $\mathcal{T}_X$ (respectively, the action of $\mathcal{T}_X$ on $\mathcal{O}_X$) induces a Lie bracket on $\mathcal{T}_{X/\mathcal{P}}$ (respectively, an action of $\mathcal{T}_{X/\mathcal{P}}$ on $\mathcal{O}_{X/\mathcal{P}}$). The bracket and the action on $\mathcal{O}_{X/\mathcal{P}}$ endow $\mathcal{T}_{X/\mathcal{P}}$ with a structure of an $\mathcal{O}_{X/\mathcal{P}}$-Lie algebroid.

The action of $\mathcal{P}$ on $\Omega^1_X$ by Lie derivative restricts to a flat connection along $\mathcal{P}$, i.e. a canonical $\dbar$-operator on $\mathcal{P}^\perp$ and, therefore, on $\bigwedge^i\mathcal{P}^\perp$ for all $i$. It is easy to see that the multiplication map $Gr^F_0\Omega^\bullet\otimes\bigwedge^i\mathcal{P}^\perp \to Gr^F_{-i}\Omega^\bullet[i]$ is an isomorphism which identifies the $\dbar$-complex of $\bigwedge^i\mathcal{P}^\perp$ with $Gr^F_{-i}\Omega^\bullet[i]$. Let $\Omega^i_{X/\mathcal{P}}:= H^i(Gr^F_{-i}\Omega^\bullet_X,\dbar)$ (so that $\mathcal{O}_{X/\mathcal{P}}:=\Omega^0_{X/\mathcal{P}}$). Then,  $\Omega^i_{X/\mathcal{P}} \subset \bigwedge^i\mathcal{P}^\perp \subset \Omega^i_X$. The wedge product of differential forms
induces a structure of a graded-commutative algebra  on $\Omega^\bullet_{X/\mathcal {P}} := \oplus_i \Omega^i_{X/\mathcal{P}}[-i] = H^\bullet(Gr^F\Omega^\bullet_X,\dbar)$. The multiplication induces an isomorphism $\bigwedge^i_{\mathcal{O}_{X/\mathcal{P}}} \Omega^1_{X/\mathcal{P}} \to \Omega^i_{X/\mathcal{P}}$. The de Rham differential $d$ restricts to the map $d :
\Omega^i_{X/\mathcal{P}} \to \Omega^{i+1}_{X/\mathcal{P}}$ and the complex $\Omega^\bullet_{X/\mathcal{P}}:=(\Omega^\bullet_{X/\mathcal{P}},d)$ is a commutative DGA.

The Hodge filtration $F_\bullet\Omega^\bullet_{X/\mathcal{P}}$ is defined by
\[
F_i\Omega^\bullet_{X/\mathcal{P}} = \oplus_{j\geq -i} \Omega^j_{X/\mathcal{P}} ,
\]
so that the inclusion $\Omega^\bullet_{X/\mathcal{P}} \hookrightarrow \Omega^\bullet_X$ is filtered with respect to the Hodge filtration. It follows from Lemma \ref{dbar lemma} that it is, in fact, a filtered quasi-isomorphism.

The duality pairing $\mathcal{T}_X/\mathcal{P}\otimes\mathcal{P}^\perp \to \mathcal{O}_X$ restricts to a non-degenerate pairing $\mathcal{T}_{X/\mathcal{P}} \otimes_{\mathcal{O}_{X/\mathcal{P}}} \Omega^1_{X/\mathcal{P}} \to \mathcal{O}_{X/\mathcal{P}}$.
The action of $\mathcal{T}_X/\mathcal{P}$ on $\mathcal{O}_{X/\mathcal{P}}$ the pairing and the de Rham differential are related by the usual formula $\xi(f)=\iota_\xi df$, for $\xi \in \mathcal{T}_{X/\mathcal{P}}$ and $f\in\mathcal{O}_{X/\mathcal{P}}$.

\section{Algebroid stacks}\label{section: algebroid stacks}
In what follows we will be considering gerbes with abelian lien. Below we briefly recall some relevant notions and constructions with the purpose of establishing notations. We refer the reader to \cite{D} and \cite{M} for detailed treatment of Picard stacks and gerbes respectively.

Suppose that $X$ is a topological space.

\subsection{Picard stacks}
We recall the definitions from \emph{1.4.Champs de Picard strictement commutatifs} of \cite{D}. 

A (strictly commutative) \emph{Picard groupoid} $\mathcal{P}$ is a non-empty groupoid equipped with a functor $+ \colon \mathcal{P}\times\mathcal{P}\to\mathcal{P}$ and functorial isomorphisms
\begin{itemize}
\item $\sigma_{x,y,z} \colon (x+y)+z \to x+(y+z)$
\item $\tau_{x,y} \colon x+y \to y+x$
\end{itemize}
rendering $+$ associative and strictly commutative, and such that for each object $x\in\mathcal{P}$ the functor $y \mapsto x+y$ is an equivalence.

A \emph{Picard stack} on $X$ is a stack in groupoids $\mathcal{P}$ equipped with a functor
$+ \colon \mathcal{P}\times\mathcal{P}\to\mathcal{P}$ and functorial isomorphisms $\sigma$ and $\tau$ as above, which, for each open subset $U \subseteq X$, endow the category $\mathcal{P}(U)$ a structure of a Picard groupoid.

\subsection{Torsors}
Suppose that $A$ is a sheaf of abelian groups on $X$. The stack of $A$-torsors will be denoted by $A[1]$; it is a gerbe since all $A$-torsors are locally trivial.

Suppose that $\phi \colon A \to B$ is a morphism of sheaves of abelian groups. The assignment $A[1] \ni T \mapsto \phi(T):= T\times_A B \in B[1]$ extends to a morphism $\phi \colon A[1] \to B[1]$ of stacks. There is a canonical map of sheaves of torsors $\phi = \phi_T \colon T \to \phi(T)$ compatible with the map $\phi$ of abelian groups and respective actions.

Suppose that $A$ and $B$ are sheaves of abelian groups. The assignment $A[1]\times B[1] \ni (S,T) \mapsto S\times T \in (A\times B)[1]$ extends to a morphism of stacks $\times \colon A[1]\times B[1] \to (A\times B)[1]$.

Suppose that $A$ is a sheaf of abelian groups with the group structure $+ \colon A\times A \to A$. The latter is a morphism of sheaves of groups since $A$ is abelian. The assignment $A[1]\times A[1] \ni (S,T) \mapsto S+T := +(S\times T)$ defines a structure of a Picard stack on $A[1]$. If $\phi \colon A \to B$ is a morphism of sheaves of abelian groups the corresponding morphism $\phi \colon A[1] \to B[1]$ is a morphism of Picard stacks.

As a consequence, the set $\pi_0A[1](X)$ of isomorphism classes of $A$-torsors is endowed with a canonical structure of an abelian group. There is a canonical isomorphism of groups $\pi_0A[1](X) \cong H^1(X;A)$.

\subsection{Gerbes}
An $A$-gerbe is a stack in groupoids which is a twisted form of (i.e. locally equivalent to) $A[1]$. The (2-)stack of $A$-gerbes will be denoted $A[2]$. Since all $A$-gerbes are locally equivalent the (2-)stack $A[2]$ is a (2-)gerbe. Every $A$-gerbe $\mathcal{S}$ (is equivalent to one which) admits a canonical action of the Picard stack $A[1]$ by autoequivalences denoted $+ \colon A[1]\times \mathcal{S} \to \mathcal{S}$, $(T, L) \mapsto T+L$ endowing $\mathcal{S}$ with a structure of a (2-)torsor under $A[1]$. We shall not make distinction between $A$-gerbes and  (2-)torsors under $A[1]$ and use the notation $A[2]$ for both.

Suppose that $\phi \colon A \to B$ is a morphism of sheaves of abelian groups and $\mathcal{S}$ is an $A$-gerbe. In particular, for any two (locally defined) objects $s_1, s_2 \in \mathcal{S}$ the sheaf $\shHom_\mathcal{S}(s_1, s_2)$ is an $A$-torsor. The stack $\phi\mathcal{S}$ is defined as the stack associated to the prestack with the same objects as $\mathcal{S}$ and $\shHom_{\phi\mathcal{S}}(s_1, s_2) := \phi(\shHom_\mathcal{S}(s_1, s_2))$. Then, $\phi\mathcal{S}$ is a $B$-gerbe and the assignment $\mathcal{S} \mapsto \phi\mathcal{S}$ extends to a morphism $\phi \colon A[2] \to B[2]$. There is a canonical morphism of stacks $\phi = \phi_\mathcal{S} \colon \mathcal{S} \to \phi\mathcal{S}$ which induces the map $\phi \colon A \to B$ on groups of automorphisms.

The Picard structure on $A[1]$ gives rise to one on $A[2]$ defined in analogous fashion. As a consequence, the set $\pi_0A[2](X)$ of equivalence classes of $A[1]$-torsors is endowed with a canonical structure of an abelian group. There is a canoncial isomorphism of groups $\pi_0A[2](X) \cong H^2(X;A)$.

Let $A^0 \xrightarrow{d} A^1$ be a complex of sheaves of abelian groups on $X$ concentrated in degrees zero and one. Recall that a $(A^0 \xrightarrow{d} A^1)$-torsor is a pair $(T, \tau)$, where $T$ is a $A^0$-torsor and $\tau$ is a trivialization (i.e. a section) of the $A^1$-torsor $d(T) = T\times_{A^0} A^1$. A morphism of $(A^0 \xrightarrow{d} A^1)$-torsors $\phi \colon (S,\sigma) \to (T,\tau)$ is a morphism of $A^0$-torsors $\phi \colon S \to T$ such that the induced morphism of $A^1$-torsors $d(\phi) \colon d(S) \to d(T)$ which commutes with respective trivializations, i.e. $d(\phi)(\sigma) = \tau$

The monoidal structure on the category of $(A^0 \xrightarrow{d} A^1)$-torsors is defined as follows. Suppose that $(S,\sigma)$ and $(T,\tau)$ are $(A^0 \xrightarrow{d} A^1)$-torsors. The sum $(S,\sigma)+(T,\tau)$ is represented by $(S+T, \sigma + \tau)$, where $\sigma + \tau$ is the trivialization of $d(S)+d(T) = d(S+T)$ induced by $\sigma$ and $\tau$.

Locally defined $(A^0 \xrightarrow{d} A^1)$-torsors form a Picard stack on $X$ which we will denote by $(A^0 \xrightarrow{d} A^1)[1]$. By a result of P.~Deligne (\cite{D}, Proposition 1.4.15) all Picard stacks arise in this way. The group (under the operation induced by the monoidal structure) of isomorphism classes of $(A^0 \xrightarrow{d} A^1)$-torsors on $X$, i.e. $\pi_0(A^0 \xrightarrow{d} A^1)[1](X)$ is canonically isomorphic to $H^1(X;A^0 \xrightarrow{d} A^1)$.

\subsection{2-torsors}
A $(A^0 \xrightarrow{d} A^1)$-gerbe is equivalent to the data $(\mathcal{S}, \sigma)$, where $\mathcal{S}$ is an $A^0$-gerbe and $\tau$ is a trivialization of the $A^1$-gerbe $d\mathcal{S}$, i.e. an equivalence $\tau \colon d\mathcal{S} \to A^1[1]$. The composition $\mathcal{S} \xrightarrow{d} d\mathcal{S} \xrightarrow{\tau} A^1[1]$ is a functorial assignment of an $A^1$-torsor $\tau(s)$ to a (locally defined) object $s\in\mathcal{S}$. The 2-stack of $(A^0 \xrightarrow{d} A^1)$-gerbes will be denoted $(A^0 \xrightarrow{d} A^1)[2]$. The Picard structure on $(A^0 \xrightarrow{d} A^1)[1]$ gives rise to one on $(A^0 \xrightarrow{d} A^1)[2]$ defined in an analogous fashion. As a consequence, the set $\pi_0(A^0 \xrightarrow{d} A^1)[2](X)$ of equivalence classes of $(A^0 \xrightarrow{d} A^1)$-gerbes is endowed with a canonical structure of an abelian group. There is a canonical isomorphism of groups $\pi_0(A^0 \xrightarrow{d} A^1)[2](X) \cong H^2(X;A^0 \xrightarrow{d} A^1)$.

\subsection{Algebroids}
Recall that a $k$-algebroid, $k$ a commutative ring with unit, is a stack in $k$-linear categories $\mathcal{C}$ such that the substack of isomorphisms $i\mathcal{C}$ (which is a stack in groupoids) is a gerbe.

Suppose that $\mathcal{A}$ is a sheaf of $k$-algebras on $X$. For $U$ open in $X$ let $\mathcal{A}^+(U)$ denote the category with one object whose endomorphism algebra is $\mathcal{A}^\op$. The assignment $U\mapsto \mathcal{A}^+(U)$ defines a prestack on $X$, which we denote $\mathcal{A}^+$. Let $\widetilde{\mathcal{A}^+}$ denote the associated stack. Note that $\widetilde{\mathcal{A}^+}$ is equivalent to the stack of $\mathcal{A}$-modules locally isomorphic to $\mathcal{A}$. The category $\widetilde{\mathcal{A}^+}(X)$ is non-empty.

Conversely, let $\mathcal{C}$ be a $k$-algebroid such that the category $\mathcal{C}(X)$ is non-empty. Let $L\in\mathcal{C}(X)$, and let $\mathcal{A} := \shEnd_\mathcal{C}(L)^\op$. The morphism $\mathcal{A}^+ \to \mathcal{C}$ which sends the unique object to $L$ induces and equivalence $\widetilde{\mathcal{A}^+} \to \mathcal{C}$.

For a sheaf of $k$-algebras $\mathcal{A}$ on $X$ a \emph{twisted form of $\mathcal{A}$} is a $k$-algebroid locally $k$-linearly equivalent to $\widetilde{\mathcal{A}^+}$. 

Let $X$ be a $C^\infty$ manifold equipped with an integrable complex distribution $\mathcal{P}$. Twisted forms of the sheaf of $\mathbb{C}$-algebras $\mathcal{O}_{X/\mathcal{P}}$ are in one-to-one correspondence with $\mathcal{O}_{X/\mathcal{P}}^\times$-gerbes via $\mathcal{S} \mapsto i\mathcal{S}$. Equivalence classes of twisted forms of $\mathcal{O}_{X/\mathcal{P}}$ form an group canonically isomorphic to $H^2(X;\mathcal{O}_{X/\mathcal{P}}^\times)$.

\section{Lie algebroids}\label{section: lie algebroids}
In this section we review basic definitions and facts concerning Lie algebroids, modules over Lie algebroids and cohomological classification of invertible modules. General references for this material are \cite{Mac05}

Throughout the section
$X$ is a $C^\infty$ manifold equipped with an integrable complex distribution $\mathcal{P}$, see Section \ref{section: distributions} for definitions and notations.

\subsection{$\mathcal{O}_{X/\mathcal{P}}$-Lie algebroids}
An $\mathcal{O}_{X/\mathcal{P}}$-Lie algebroids or, simply, a Lie algebroid is the datum of
\begin{itemize}
\item an $\mathcal{O}_{X/\mathcal{P}}$-module $\mathcal{B}$,
\item an $\mathcal{O}_{X/\mathcal{P}}$-linear map $\sigma\colon \mathcal{B} \to \mathcal{T}_{X/\mathcal{P}}$ (the \emph{anchor}),
\item a structure of a $\mathbb{C}$-Lie algebra on $\mathcal{B}$ given by $[\ ,\ ]\colon \mathcal{B}\otimes_\mathbb{C}\mathcal{B} \to \mathcal{B}$
\end{itemize}
such that the Leibniz rule
\[
[b_1,fb_2] = \sigma(b_1)(f)b_2 + f[b_1,b_2],
\]
$b_1, b_2\in\mathcal{B}$, $f\in\mathcal{O}_{X/\mathcal{P}}$, holds. As a consequence, the anchor map is a morphism of Lie algebras.

A \emph{morphism of Lie algebroids} is an $\mathcal{O}_{X/\mathcal{P}}$-linear map $\phi\colon \mathcal{B}_1 \to \mathcal{B}_2$ of Lie algebras which is commutes with respective anchors.

The sheaf $\mathcal{T}_{X/\mathcal{P}}$ is a Lie algebroid under the Lie bracket of vector fields with the anchor the identity map. It is immediate from the definitions that $\mathcal{T}_{X/\mathcal{P}}$ is the terminal object in the category of Lie algebroids.

Further examples of Lie algebroids are given below.

\subsubsection{Atiyah algebras}\label{subsubsection: Atiyah algebras}
For a locally free $\mathcal{O}_{X/\mathcal{P}}$-module of finite rank $\mathcal{E}$ the \emph{Atiyah algebra} of $\mathcal{E}$, denoted $\mathcal{A}_\mathcal{E}$ is defined by the pull-back diagram
\begin{equation*}
\begin{CD}
\mathcal{A}_\mathcal{E} @>{\sigma}>> \mathcal{T}_{X/\mathcal{P}} \\
@VVV @VV{1\otimes\id}V \\
\operatorname{Diff}^{\leq 1}(\mathcal{E}, \mathcal{E}) @>{\sigma}>> \shEnd_{\mathcal{O}_{X/\mathcal{P}}}(\mathcal{E})\otimes\mathcal{T}_{X/\mathcal{P}}
\end{CD}
\end{equation*}
where the bottom horizontal arrow is the order one principal symbol map.
It is a Lie algebroid under (the restriction of) the commutator bracket, the anchor given by the top horizontal map (the restriction of the principal symbol map). There is an exact sequence
\begin{equation*}
0 \to \shEnd_{\mathcal{O}_{X/\mathcal{P}}}(\mathcal{E}) \to \mathcal{A}_\mathcal{E} \xrightarrow{\sigma} \mathcal{T}_{X/\mathcal{P}} \to 0 \ .
\end{equation*}

\subsubsection{Poisson structures}\label{subsubsection: Poisson structures}
Let $\poisson\in\Gamma(X;\bigwedge^2\mathcal{T}_{X/\mathcal{P}})$ be a bi-vector; we denote by $\widetilde{\poisson}\colon \Omega^1_{X/\mathcal{P}} \to \mathcal{T}_{X/\mathcal{P}}$ the adjoint map $\alpha\mapsto\poisson(\alpha,\cdot)$. The bi-vector $\poisson$ is \emph{Poisson} if the operation $\{\ ,\ \}\colon \mathcal{O}_{X/\mathcal{P}}\otimes_\mathbb{C}\mathcal{O}_{X/\mathcal{P}} \to \mathcal{O}_{X/\mathcal{P}}$ defined by $\{f,g\} = \poisson(df,dg)$ satisfies the Jacobi identity, i.e. endows $\mathcal{O}_{X/\mathcal{P}}$ with a structure of a $\mathbb{C}$-Lie algebra.

Let $[\ ,\ ]_\poisson\colon \Omega^1_{X/\mathcal{P}}\otimes_\mathbb{C}\Omega^1_{X/\mathcal{P}} \to \Omega^1_{X/\mathcal{P}}$ denote the map given by
\begin{equation}\label{bracket on forms}
[\alpha,\beta]_\poisson = L_{\widetilde{\poisson}(\alpha)}\beta -  L_{\widetilde{\poisson}(\beta)}\alpha - d\poisson(\alpha,\beta) .
\end{equation}
The bi-vector $\poisson$ is Poisson if and only if the bracket \eqref{bracket on forms} on $\Omega^1_{X/\mathcal{P}}$ satisfies the Jacobi identity; the map $\widetilde{\poisson}$ is compatible with respective brackets.

Thus, a Poisson bi-vector $\poisson$ gives rise to a structure, denoted $\Pi$ in what follows, of a Lie algebroid on $\Omega^1_{X/\mathcal{P}}$ with bracket \eqref{bracket on forms} and the anchor $\widetilde{\poisson}$. 

\subsubsection{The de Rham complex}\label{subsubsection: the de Rham complex}
We shall assume for simplicity that the Lie algebroid $\mathcal{B}$ in question is locally free of finite rank over $\mathcal{O}_{X/\mathcal{P}}$. Let $\Omega^0_\mathcal{B} = \mathcal{O}_{X/\mathcal{P}}$, $\Omega^1_\mathcal{B} = \dual{\mathcal{B}}$, where, for an $\mathcal{O}_{X/\mathcal{P}}$-module $\mathcal{E}$ we put $\dual{\mathcal{E}} := \shHom_{\mathcal{O}_{X/\mathcal{P}}}(\mathcal{E}, \mathcal{O}_{X/\mathcal{P}})$.

Let $\Omega^i_\mathcal{B} = \bigwedge^i\Omega^1_\mathcal{B}$. Then, $\Omega^\bullet_\mathcal{B} := \bigoplus_i \Omega^i_\mathcal{B}[-i]$ is a graded commutative algebra. The composition
\begin{equation*}
\mathcal{O}_{X/\mathcal{P}} \xrightarrow{d} \Omega^1_{X/\mathcal{P}} \xrightarrow{\dual{\sigma}} \dual{B} = \Omega^1_\mathcal{B}
\end{equation*}
extends uniquely to a square-zero derivation $d_\mathcal{B}$ of degree one of the algebra $\Omega^\bullet_\mathcal{B}$. Let $\Omega^{i, cl}_\mathcal{B} := \ker(\Omega^i_\mathcal{B} \xrightarrow{d_\mathcal{B}} \Omega^{i+1}_\mathcal{B})$. In the case $\mathcal{B} = 
\Pi$ of example \ref{subsubsection: Poisson structures} $\Omega^i_\Pi = \bigwedge^i\mathcal{T}_{X/\mathcal{P}}$ and the de Rham differential is given by $d_
\Pi = [\poisson, \cdot ]$, i.e the Schouten bracket with the bi-vector $\pi$.

\subsection{Modules over Lie algebroids}
Suppose that $\mathcal{B}$ is a Lie algebroid.

\subsubsection{$\mathcal{B}$-modules}
Suppose that $\mathcal{M}$ is a $\mathcal{O}_{X/\mathcal{P}}$-module. A $\mathcal{B}$-module structure on $\mathcal{M}$ is given by an $\mathcal{O}_{X/\mathcal{P}}$-linear map $\mathcal{B} \to \shEnd_\mathbb{C}(\mathcal{M})$, $b \mapsto (m\mapsto b.m)$, $b\in\mathcal{B}$, $m\in\mathcal{M}$, which is a map of Lie algebras (with respect to the commutator bracket of endomorphisms) and satisfies the Leibniz rule $b.fm = \sigma(b)(f)m + f\cdot b.m$.

Suppose that $\mathcal{E}$ is a locally free $\mathcal{O}_{X/\mathcal{P}}$-module of finite rank. It transpires from the above definition that a $\mathcal{B}$-module structure on $\mathcal{E}$ amounts to a morphism of algebroids $\mathcal{B} \to \mathcal{A}_\mathcal{E}$. Equivalently, it is a morphism of Lie algebroids $\mathcal{B} \to \mathcal{B}\times_{\mathcal{T}_{X/\mathcal{P}}} \mathcal{A}_\mathcal{E}$ splitting the exact sequence
\begin{equation*}
0 \to \shEnd_{\mathcal{O}_{X/\mathcal{P}}}(\mathcal{E}) \to \mathcal{B}\times_{\mathcal{T}_{X/\mathcal{P}}} \mathcal{A}_\mathcal{E} \xrightarrow{\pr_\mathcal{B}} \mathcal{B} \to 0 .
\end{equation*}

A morphism of $\mathcal{B}$-modules is an $\mathcal{O}_{X/\mathcal{P}}$-linear map which commutes with respective actions of $\mathcal{B}$. $\mathcal{B}$-modules form an Abelian category.

The structure sheaf $\mathcal{O}_{X/\mathcal{P}}$ is a $\mathcal{T}_{X/\mathcal{P}}$-module hence a $\mathcal{B}$-module in a canonical way for any Lie algebroid $\mathcal{B}$.

\subsubsection{Tensor product of $\mathcal{B}$-modules}
The category of $\mathcal{B}$-modules is endowed with a monoidal structure: for any two modules $\mathcal{M}$ and $\mathcal{N}$ their tensor product $\mathcal{M}\otimes_{\mathcal{O}_{X/\mathcal{P}}}\mathcal{N}$ is equipped with the canonical $\mathcal{B}$-module structure given by the Leibniz rule. Namely, for $m\in\mathcal{M}$, $n\in\mathcal{M}$, $b\in\mathcal{B}$, $b.(m\otimes n) = (b.m)\otimes n + m\otimes(b.n)$.

The sheaf $\shHom_{\mathcal{O}_{X/\mathcal{P}}}(\mathcal{M}, \mathcal{N})$ is equipped with the canonical structure of a $\mathcal{B}$-module by $(b.\phi)(m) = b.(\phi(m)) - \phi(b.m)$.

\subsubsection{Invertible $\mathcal{B}$-modules}\label{subsubsection: invertible B-modules}
For the purposes of this note an invertible $\mathcal{B}$-module is a line bundle (i.e. a locally free $\mathcal{O}_{X/\mathcal{P}}$-module of rank one) equipped with a structure of a $\mathcal{B}$-module. Invertible $\mathcal{B}$-modules form a Picard category, denoted $\Pic_\mathcal{B}(X)$, under the tensor product over $\mathcal{O}_{X/\mathcal{P}}$. Let $\shPic_\mathcal{B}$ denote the Picard stack of invertible $\mathcal{B}$-modules, i.e. $\underline{\Pic}_\mathcal{B}(U) = \Pic_\mathcal{B}(U)$ for $U$ open in $X$.

Suppose that $\mathcal{L}$ is a line bundle; we shall denote by $\mathcal{L}^\times$ the corresponding $\mathcal{O}_{X/\mathcal{P}}^\times$-torsor (the subsheaf of nowhere vanishing sections). Recall that a $\mathcal{B}$-module structure on $\mathcal{L}$ is a morphism of Lie algebroids $\nabla\colon \mathcal{B} \to \mathcal{B}\times_{\mathcal{T}_{X/\mathcal{P}}} \mathcal{A}_\mathcal{L}$ splitting the short exact sequence
\begin{equation}\label{O ext B struct on L}
0 \to \mathcal{O}_{X/\mathcal{P}} \xrightarrow{i} \mathcal{B}\times_{\mathcal{T}_{X/\mathcal{P}}} \mathcal{A}_\mathcal{L} \xrightarrow{\pr_\mathcal{B}} \mathcal{B} \to 0 .
\end{equation}
For any two such, $\nabla_1, \nabla_2 \colon \mathcal{B} \to \mathcal{B}\times_{\mathcal{T}_{X/\mathcal{P}}} \mathcal{A}_\mathcal{L}$, their difference $\nabla_2 - \nabla_1$ satisfies $\pr_\mathcal{B}\circ(\nabla_2 - \nabla_1) = 0$ and therefore factors through (the inclusion of) $\mathcal{O}_{X/\mathcal{P}}$. Moreover, the resulting section $\nabla_2 - \nabla_1 \in \Hom_{\mathcal{O}_{X/\mathcal{P}}}(\mathcal{B},\mathcal{O}_{X/\mathcal{P}}) = \Omega^1_\mathcal{B}$ satisfies $d_\mathcal{B}(\nabla_2 - \nabla_1) = 0$, where $d_\mathcal{B}$ is the de Rham differential introduced in \ref{subsubsection: the de Rham complex}.

The sheaf of (locally defined) splittings of \eqref{O ext B struct on L} by Lie algebroid morphisms is a $\Omega^{1,cl}_\mathcal{B}$-torsor and will be denoted $\CONN^\flat_\mathcal{B}(\mathcal{L})$. A structure of a $\mathcal{B}$-module on $\mathcal{L}$ is a trivialization of $\CONN^\flat_\mathcal{B}(\mathcal{L})$.

The morphism of sheaves of groups
\[
d_\mathcal{B}\log \colon \mathcal{O}_{X/\mathcal{P}}^\times \to \Omega^{1,cl}_\mathcal{B}
\]
is defined by $f \mapsto \dfrac{d_\mathcal{B}f}{f}$.
There is a canonical morphism of sheaves
\begin{equation}\label{dlog L}
d_\mathcal{B}\log\colon \mathcal{L}^\times \to \CONN^\flat_\mathcal{B}(\mathcal{L})
\end{equation}
defined as follows. Let $s \in \mathcal{L}^\times$ be a (locally defined) section. The section $s$ gives rise to a (locally defined) isomorphism $\mathcal{O}_{X/\mathcal{P}} \xrightarrow{\cong} \mathcal{L}$. Via this isomorphism the canonical $\mathcal{B}$-module structure on $\mathcal{O}_{X/\mathcal{P}}$ gives rise to a $\mathcal{B}$-module structure on $\mathcal{L}$ which corresponds to a (locally defined) section denoted $d_\mathcal{B}\log(s) \in \CONN^\flat_\mathcal{B}(\mathcal{L})$. We leave it to the reader to check that the morphism \eqref{dlog L} is compatible with the map of sheaves of groups $d_\mathcal{B}\log$ and respective actions. Hence, \eqref{dlog L} induces a canonical isomorphism of $\Omega^{1,cl}_\mathcal{B}$-torsors $\CONN^\flat_\mathcal{B}(\mathcal{L})\cong d_\mathcal{B}\log(\mathcal{L}^\times) := \mathcal{L}^\times\times_{\mathcal{O}_{X/\mathcal{P}}^\times} \Omega^{1,cl}_\mathcal{B}$. In particular, a structure of a $\mathcal{B}$-module on $\mathcal{L}$ amounts to a trivialization of $d_\mathcal{B}\log(\mathcal{L}^\times)$.

\subsubsection{Classification of invertible $\mathcal{B}$-modules}
Assigning to an invertible $\mathcal{B}$-module $\mathcal{L}$ the pair $(\mathcal{L}^\times, \nabla)$, where $\nabla$ is the trivialization of $d_\mathcal{B}\log(\mathcal{L}^\times)$ corresponding to the $\mathcal{B}$-module structure we obtain a morphism of Picard stacks
\begin{equation}\label{Pic B to tors}
\shPic_\mathcal{B} \to (\mathcal{O}_{X/\mathcal{P}}^\times \xrightarrow{d_\mathcal{B}\log} \Omega_\mathcal{B}^{1,cl})[1]
\end{equation}

\begin{lemma}\label{lemma: pic algd}
The morphism \eqref{Pic B to tors} is an equivalence.
\end{lemma}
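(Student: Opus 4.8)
The plan is to verify that the morphism \eqref{Pic B to tors} is an equivalence of Picard stacks by checking that it induces isomorphisms on the two homotopy sheaves: $\pi_1$, the sheaf of automorphisms of the unit object, and $\pi_0$, the sheafification of the presheaf of isomorphism classes of objects. By Deligne's dictionary (\cite{D}, 1.4.15) a morphism of Picard stacks is an equivalence precisely when the associated morphism of two-term complexes of sheaves is a quasi-isomorphism, i.e. induces isomorphisms on both cohomology sheaves. All of the genuine content is already packaged in the canonical isomorphism of $\Omega^{1,cl}_\mathcal{B}$-torsors $\CONN^\flat_\mathcal{B}(\mathcal{L}) \cong d_\mathcal{B}\log(\mathcal{L}^\times)$ established above, together with the standard equivalence $\mathcal{L}\mapsto\mathcal{L}^\times$ between the Picard stack of line bundles and $\mathcal{O}_{X/\mathcal{P}}^\times[1]$; the proof consists in unwinding these.

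For $\pi_1$ I would compute the automorphisms of the unit object on each side. On the left, an automorphism of the trivial invertible $\mathcal{B}$-module $\mathcal{O}_{X/\mathcal{P}}$ (with its canonical structure via the anchor) is multiplication by a unit $f\in\mathcal{O}_{X/\mathcal{P}}^\times$ commuting with the $\mathcal{B}$-action; the Leibniz rule forces $\sigma(b)(f)=0$ for all $b\in\mathcal{B}$, i.e. $f\in\ker(d_\mathcal{B}\log)$. On the right, an automorphism of the trivial $(\mathcal{O}_{X/\mathcal{P}}^\times\to\Omega^{1,cl}_\mathcal{B})$-torsor is a unit $f$ whose image $d_\mathcal{B}\log f$ vanishes, again $\ker(d_\mathcal{B}\log)$, and the morphism is visibly the identity on this sheaf.

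For $\pi_0$ I would work locally, where every line bundle is trivial. A choice of trivialization of $\mathcal{L}$ identifies an invertible $\mathcal{B}$-module structure with a trivialization of $\CONN^\flat_\mathcal{B}(\mathcal{O}_{X/\mathcal{P}})$; since the latter is an $\Omega^{1,cl}_\mathcal{B}$-torsor, such trivializations form a torsor under $\Omega^{1,cl}_\mathcal{B}$, and two of them define isomorphic invertible $\mathcal{B}$-modules exactly when they differ by $d_\mathcal{B}\log f$ for some unit $f$ (the effect of changing the trivialization of $\mathcal{L}$). Hence the sheaf of local isomorphism classes is $\operatorname{coker}(d_\mathcal{B}\log\colon\mathcal{O}_{X/\mathcal{P}}^\times\to\Omega^{1,cl}_\mathcal{B})$, which is precisely the $\pi_0$-sheaf of the target; by construction \eqref{Pic B to tors} realizes this identification. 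Isomorphisms on both homotopy sheaves then yield the equivalence.

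The computation is entirely formal once the torsor identification is in hand, so I do not expect a serious obstacle; the one point requiring care is the compatibility of \eqref{Pic B to tors} with the Picard (monoidal) structures, which must be checked so that the comparison of homotopy sheaves legitimately detects an equivalence of \emph{Picard} stacks rather than merely of underlying stacks. Alternatively, one may bypass homotopy sheaves and argue directly that $(T,\tau)\mapsto(T\times_{\mathcal{O}_{X/\mathcal{P}}^\times}\mathcal{O}_{X/\mathcal{P}},\tau)$ --- using $\CONN^\flat_\mathcal{B}(\mathcal{L})\cong d_\mathcal{B}\log(\mathcal{L}^\times)$ to read $\tau$ as a $\mathcal{B}$-module structure --- furnishes a quasi-inverse, verifying that it is inverse to \eqref{Pic B to tors} on objects and on morphisms.
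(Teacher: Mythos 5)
Your proof is correct, but your primary argument takes a genuinely different route from the paper's, whose entire proof is essentially the construction you relegate to your final sentence. The paper simply exhibits the quasi-inverse: a torsor $T$ determines the line bundle $\mathcal{L} := T\times_{\mathcal{O}_{X/\mathcal{P}}^\times}\mathcal{O}_{X/\mathcal{P}}$ with $\mathcal{L}^\times\cong T$ canonically, whence $d_\mathcal{B}\log(T)\cong d_\mathcal{B}\log(\mathcal{L}^\times)\cong\CONN^\flat_\mathcal{B}(\mathcal{L})$, so a trivialization of $d_\mathcal{B}\log(T)$ is precisely a $\mathcal{B}$-module structure on $\mathcal{L}$; no homotopy sheaves appear. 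Your main route --- checking that \eqref{Pic B to tors} induces isomorphisms on $\pi_1$ (both sides being $\ker(d_\mathcal{B}\log)$) and on sheafified $\pi_0$ (both sides being $\operatorname{coker}\bigl(d_\mathcal{B}\log\colon\mathcal{O}_{X/\mathcal{P}}^\times\to\Omega^{1,cl}_\mathcal{B}\bigr)$, computed locally on trivialized objects) --- is also valid, and your local computations are accurate. Two remarks on the comparison. First, the additivity you flag as the delicate point is not really an extra burden: the paper already asserts, in the paragraph defining \eqref{Pic B to tors}, that it is a morphism of Picard stacks, and this is what licenses both the reduction of Hom-sheaves to automorphisms of the unit and the appeal to Deligne's dictionary (strictly speaking, the criterion ``equivalence iff quasi-isomorphism of associated complexes'' is a companion of \cite{D}, Proposition 1.4.15, which itself only states that every Picard stack arises from a complex). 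Second, the trade-off: the paper's argument is shorter and produces the inverse functor explicitly from the single identification $\CONN^\flat_\mathcal{B}(\mathcal{L})\cong d_\mathcal{B}\log(\mathcal{L}^\times)$ of \eqref{dlog L}, whereas yours replaces functor-level verification by two sheaf-level computations, which is more systematic and makes visible exactly which kernel and cokernel control the classification --- at the cost of invoking the derived-category criterion for morphisms of Picard stacks. Both are complete proofs.
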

\begin{proof}
A quasi-inverse to \eqref{Pic B to tors} is given by the following construction.
A $\mathcal{O}_{X/\mathcal{P}}^\times$-torsor $T$ determines the line bundle $\mathcal{L} := T\times_{\mathcal{O}_{X/\mathcal{P}}^\times}\mathcal{O}_{X/\mathcal{P}}$ such that there is a canonical isomorphism $\mathcal{L}^\times \cong T$, hence $d_\mathcal{B}\log(T) \cong d_\mathcal{B}\log(\mathcal{L}^\times)\cong \CONN^\flat_\mathcal{B}(\mathcal{L})$. Thus, a trivialization of $d_\mathcal{B}\log(T)$ gives rise to a $\mathcal{B}$-module structure on $\mathcal{L}$.
\end{proof}

\begin{cor}
$\pi_0\Pic_\mathcal{B}(X) \cong H^1(X;\mathcal{O}_{X/\mathcal{P}}^\times \xrightarrow{d_\mathcal{B}\log} \Omega_\mathcal{B}^{1,cl})$.
\end{cor}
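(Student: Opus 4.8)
**

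The corollary follows almost immediately from Lemma \ref{lemma: pic algd} by passing to groups of isomorphism classes, so the plan is to identify both sides of the claimed isomorphism with the two sides of the equivalence \eqref{Pic B to tors} after applying $\pi_0$.

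First I would recall that $\pi_0$ of a Picard stack is a functor to abelian groups that carries an equivalence of Picard stacks to an isomorphism of abelian groups; this is automatic since an equivalence is in particular essentially surjective and fully faithful, hence induces a bijection on isomorphism classes, and the bijection respects the Picard (group) structure because the equivalence is monoidal. Applying $\pi_0(-)(X)$ to the equivalence \eqref{Pic B to tors} of Lemma \ref{lemma: pic algd} therefore yields a canonical isomorphism of abelian groups
\begin{equation*}
\pi_0\shPic_\mathcal{B}(X) \xrightarrow{\ \cong\ } \pi_0\bigl(\mathcal{O}_{X/\mathcal{P}}^\times \xrightarrow{d_\mathcal{B}\log} \Omega_\mathcal{B}^{1,cl}\bigr)[1](X).
\end{equation*}

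Next I would match each side with the group named in the statement. On the left, by definition $\pi_0\shPic_\mathcal{B}(X)$ is the group of isomorphism classes of invertible $\mathcal{B}$-modules over $X$, which is exactly $\pi_0\Pic_\mathcal{B}(X)$ as introduced in \ref{subsubsection: invertible B-modules}. On the right, the two-term complex $\mathcal{O}_{X/\mathcal{P}}^\times \xrightarrow{d_\mathcal{B}\log} \Omega_\mathcal{B}^{1,cl}$ sits in degrees zero and one, and the general fact recorded in the Torsors/Gerbes discussion (following Deligne's Proposition 1.4.15 of \cite{D}) gives a canonical isomorphism
\begin{equation*}
\pi_0\bigl(A^0 \xrightarrow{d} A^1\bigr)[1](X) \cong H^1\bigl(X; A^0 \xrightarrow{d} A^1\bigr)
\end{equation*}
for any complex concentrated in degrees zero and one. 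Specializing to $A^0 = \mathcal{O}_{X/\mathcal{P}}^\times$, $A^1 = \Omega_\mathcal{B}^{1,cl}$ and $d = d_\mathcal{B}\log$ identifies the right-hand group with the hypercohomology $H^1(X;\mathcal{O}_{X/\mathcal{P}}^\times \xrightarrow{d_\mathcal{B}\log} \Omega_\mathcal{B}^{1,cl})$. Composing the three canonical isomorphisms yields the corollary.

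There is no real obstacle here: the entire content is carried by Lemma \ref{lemma: pic algd}, and the corollary is its shadow on $\pi_0$. The only point requiring a word of care is the compatibility of the induced bijection with the abelian group structures, i.e.\ that the equivalence of \eqref{Pic B to tors} is an equivalence of \emph{Picard} stacks and not merely of stacks in groupoids; this was built into the construction of \eqref{Pic B to tors} as a morphism of Picard stacks, so the group law on $\pi_0$ is automatically respected.
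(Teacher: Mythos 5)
Your proposal is correct and follows exactly the route the paper intends: the corollary is stated without proof precisely because it is the image under $\pi_0(-)(X)$ of the Picard-stack equivalence \eqref{Pic B to tors} of Lemma \ref{lemma: pic algd}, combined with the general identification $\pi_0(A^0 \xrightarrow{d} A^1)[1](X) \cong H^1(X;A^0 \xrightarrow{d} A^1)$ recorded in the section on torsors. Your additional remark that the monoidality of the equivalence is what makes the induced bijection a group isomorphism is the right point of care, and it is indeed already built into the construction of \eqref{Pic B to tors} as a morphism of Picard stacks.
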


The above result reduces to the well-known classification of line bundles equipped with flat connections in the case $\mathcal{B} = \mathcal{T}_{X/\mathcal{P}}$ (see, for example, \cite{JLB}). In the case $\mathcal{B} = \Pi$ (see \ref{subsubsection: Poisson structures}) Lemma \ref{lemma: pic algd} is a particular case of Proposition 2.8 of \cite{B01}.

\subsection{$\mathcal{O}$-extensions}
Suppose that $\mathcal{B}$ is a $\mathcal{O}_{X/\mathcal{P}}$-Lie algebroid. An \emph{$\mathcal{O}$-extension of $\mathcal{B}$} is a triple $(\widetilde{\mathcal{B}}, \mathfrak{c}, \sigma)$ which consists of
\begin{itemize}
\item a Lie algebroid $\widetilde{\mathcal{B}}$,
\item a central element $\mathfrak{c}$ of the Lie algebra of sections $\Gamma(X;\widetilde{\mathcal{B}})$,
\item a morphism of Lie algebroids
$\sigma : \widetilde{\mathcal{B}} \to \mathcal{B}$
\end{itemize}
such that these data give rise to the associated short exact sequence
\begin{equation}\label{O-ext}
0 \to \mathcal{O}_{X/\mathcal{P}} \xrightarrow{f\mapsto f\cdot\mathfrak{c}} \widetilde{\mathcal{B}} \xrightarrow{\sigma} \mathcal{B} \to 0
\end{equation}
Locally defined $\mathcal{O}$-extensions of $\mathcal{B}$ form a Picard stack under the operation of Baer sum of extensions  which we denote $\mathcal{O}\EXT(\mathcal{B})$.

Since $\mathfrak{c}$ is central, it follows from the Leibniz rule that, for $f\in \mathcal{O}_{X/\mathcal{P}}$, $b\in \widetilde{\mathcal{B}}$, $[b,f\cdot\mathfrak{c}] = \overline{b}(f)\cdot\mathfrak{c}$, where $\overline{b} \in \mathcal{T}_{X/\mathcal{P}}$ denotes the image of $b$ under the anchor map. That is to say, the (adjoint) action of $\widetilde{\mathcal{B}}$ on $\mathcal{O}_{X/\mathcal{P}}\cdot\mathfrak{c} \cong \mathcal{O}_{X/\mathcal{P}}$ factors through $\mathcal{T}_{X/\mathcal{P}}$ and the latter action coincides with the Lie derivative action of vector fields on functions.

Suppose that $\mathcal{B}$ is locally free of finite rank over $\mathcal{O}_{X/\mathcal{P}}$. Then there is a canonical equivalence of Picard stacks
\begin{equation}\label{O-ext to tors}
\mathcal{O}\EXT(\mathcal{B})\xrightarrow{\cong} (\Omega^1_\mathcal{B} \to \Omega^{2,cl}_\mathcal{B})[1] .
\end{equation}
The functor \eqref{O-ext to tors} associates to $(\widetilde{\mathcal{B}}, \mathfrak{c}, \sigma)$ the $\Omega^1_\mathcal{B}$-torsor $\CONN_\mathcal{B}(\widetilde{\mathcal{B}})$ of (locally defined) splittings of $\sigma \colon \widetilde{\mathcal{B}} \to \mathcal{B}$. The ``curvature" map $c \colon \CONN_\mathcal{B}(\widetilde{\mathcal{B}}) \to \Omega^{2,cl}_\mathcal{B}$, $\CONN_\mathcal{B}(\widetilde{\mathcal{B}}) \ni \nabla \mapsto c(\nabla) \in \Omega^{2,cl}_\mathcal{B} \subset \Hom(\bigwedge^2 \mathcal{B}, \mathcal{O}_{X/\mathcal{P}})$ is determined by $c(\nabla)(b_1,b_2)\cdot\mathfrak{c} = [\nabla(b_1), \nabla(b_2)] - \nabla([b_1,b_2])$. We leave it to the reader to check that the standard calculation shows that the above formula defines a closed 2-form.

\section{Connective structures}\label{section: connective structures}
Throughout this section $X$ is a $C^\infty$ manifold equipped with an integrable complex distribution $\mathcal{P}$ (see Section \ref{section: distributions} for definitions and notations), $\mathcal{B}$ is a $\mathcal{O}_{X/\mathcal{P}}$-Lie algebroid (Section \ref{section: lie algebroids}). This section is devoted to generalities on connective structures on $\mathcal{O}_{X/\mathcal{P}}^\times$-gerbes in terms of the formalism of Section \ref{section: algebroid stacks}. The exposition follows closely the terminology of \cite{JLB}, Chapter V.

\subsection{$\mathcal{B}$-connective structures}
Let $\mathcal{S}$ be a $\mathcal{O}_{X/\mathcal{P}}^\times$-gerbe. A \emph{$\mathcal{B}$-connective structure} on $\mathcal{S}$ is a trivialization of the $\Omega^1_\mathcal{B}$-gerbe $d_\mathcal{B}\log\mathcal{S}$, i.e. a morphism, thus an equivalence, of $\Omega^1_\mathcal{B}$-gerbes $\Conn\colon d_\mathcal{B}\log\mathcal{S} \to \Omega^1_\mathcal{B}[1]$. The composition $\mathcal{S} \to d_\mathcal{B}\log\mathcal{S} \xrightarrow{\Conn} \Omega^1_\mathcal{B}[1]$ is a functorial assignment of a $\Omega^1_\mathcal{B}$-torsor $\Conn(L)$ to a (locally defined) object $L\in\mathcal{S}$ which induces the map $d_\mathcal{B}\log\colon \mathcal{O}_{X/\mathcal{P}}^\times \to \Omega^1_\mathcal{B}$ on respective sheaves of groups of automorphisms. For locally defined objects $L_1, L_2\in\mathcal{S}$ the connective structure induces the isomorphism $\Conn\colon \CONN_\mathcal{B}(\shHom_\mathcal{S}(L_1,L_2)) = \shHom_{d_\mathcal{B}\log\mathcal{S}}(d_\mathcal{B}\log(L_1),d_\mathcal{B}\log(L_2)) \to \shHom_{\Omega^1_\mathcal{B}[1]}(\Conn(L_1),\Conn(L_2)) \cong \Conn(L_2)-\Conn(L_1)$.

It transpires from the above description that locally defined $\mathcal{O}_{X/\mathcal{P}}^\times$-gerbes with $\mathcal{B}$-connective structure, i.e. pairs $(\mathcal{S},\Conn)$ as above are objects of the Picard 2-stack $(\mathcal{O}_{X/\mathcal{P}}^\times \xrightarrow{d_\mathcal{B}\log} \Omega^1_\mathcal{B})[2]$.
As a consequence, the group of equivalence classes of $\mathcal{O}_{X/\mathcal{P}}^\times$-gerbes with $\mathcal{B}$-connective structure is canonically isomorphic to $H^2(X;\mathcal{O}_{X/\mathcal{P}}^\times \xrightarrow{d_\mathcal{B}\log} \Omega^1_\mathcal{B})$.

Since the map $d_\mathcal{B}\log\colon \mathcal{O}_{X/\mathcal{P}}^\times \to \Omega^1_\mathcal{B}$ factors through the inclusion $\Omega^{1,cl}_\mathcal{B} \hookrightarrow \Omega^1_\mathcal{B}$ the $\Omega^1_\mathcal{B}$-gerbe $d_\mathcal{B}\log\mathcal{S}$ is induced from the $\Omega^{1,cl}_\mathcal{B}$-gerbe which will be denoted $d_\mathcal{B}\log\mathcal{S}^\flat$. A connective structure $\Conn\colon d_\mathcal{B}\log\mathcal{S} \to \Omega^1_\mathcal{B}[1]$ is called \emph{flat} if it is induced from the trivialization of $d_\mathcal{B}\log\mathcal{S}^\flat$, i.e. from an equivalence $\Conn\colon d_\mathcal{B}\log\mathcal{S}^\flat \to \Omega^{1,cl}_\mathcal{B}[1]$.

It transpires from the above description that locally defined $\mathcal{O}_{X/\mathcal{P}}^\times$-gerbes with flat $\mathcal{B}$-connective structure are objects of $(\mathcal{O}_{X/\mathcal{P}}^\times \xrightarrow{d_\mathcal{B}\log} \Omega^{1,cl}_\mathcal{B})[2]$. As a consequence, the group of equivalence classes of $\mathcal{O}_{X/\mathcal{P}}^\times$-gerbes with flat $\mathcal{B}$-connective structure is canonically isomorphic to $H^2(X;\mathcal{O}_{X/\mathcal{P}}^\times \xrightarrow{d_\mathcal{B}\log} \Omega^{1,cl}_\mathcal{B})$.

\subsection{Curving}\label{subsection: curving}
A \emph{curving} $\kappa$ on the a $\mathcal{B}$-connective structure $\Conn$ on a $\mathcal{O}_{X/\mathcal{P}}^\times$-gerbe $\mathcal{S}$ is a lift of the functor $\mathcal{S} \xrightarrow{\Conn} \Omega^1_\mathcal{B}[1]$ to $(\Omega^1_\mathcal{B} \to \Omega^2_\mathcal{B})[1]$, i.e. a factorization of $\Conn$ as
\begin{equation*}
\mathcal{S} \xrightarrow{\kappa} (\Omega^1_\mathcal{B} \to \Omega^2_\mathcal{B})[1] \to \Omega^1_\mathcal{B}[1] .
\end{equation*}
We refer to the functor $\mathcal{S} \xrightarrow{\kappa} (\Omega^1_\mathcal{B} \to \Omega^2_\mathcal{B})[1]$ as \emph{a connective structure with curving} (given by $\kappa$). 

Locally defined $\mathcal{O}_{X/\mathcal{P}}^\times$-gerbes with $\mathcal{B}$-connective structure with curving, i.e. pairs $(\mathcal{S}, \kappa)$ as above are objects of the Picard 2-stack which we denote $(\mathcal{O}_{X/\mathcal{P}}^\times \xrightarrow{d_\mathcal{B}\log} \Omega^1_\mathcal{B}\xrightarrow{d_\mathcal{B}}\Omega^2_\mathcal{B})[2]$. The group of equivalence classes of $\mathcal{O}_{X/\mathcal{P}}^\times$-gerbes with connective structure with curving is canonically isomorphic to $H^2(X;\mathcal{O}_{X/\mathcal{P}}^\times \xrightarrow{d_\mathcal{B}\log} \Omega^1_\mathcal{B} \xrightarrow{d_\mathcal{B}}\Omega^2_\mathcal{B})$.

The curving $\mathcal{S} \to (\Omega^1_\mathcal{B} \to \Omega^2_\mathcal{B})[1]$ is \emph{flat} if it factors through $(\Omega^1_\mathcal{B} \to \Omega^{2,cl}_\mathcal{B})[1]$. In this case we refer to the functor $\mathcal{S} \to (\Omega^1_\mathcal{B} \to \Omega^{2,cl}_\mathcal{B})[1]$ as a $\mathcal{B}$-connective structure with a flat curving. Locally defined $\mathcal{O}_{X/\mathcal{P}}^\times$-gerbes with $\mathcal{B}$-connective structure with flat curving are objects of the Picard 2-stack which we denote $(\mathcal{O}_{X/\mathcal{P}}^\times \xrightarrow{d_\mathcal{B}\log} \Omega^1_\mathcal{B}\xrightarrow{d_\mathcal{B}}\Omega^{2,cl}_\mathcal{B})[2]$.
The group of equivalence classes of $\mathcal{O}_{X/\mathcal{P}}^\times$-gerbes with connective structure with flat curving is canonically isomorphic to $H^2(X;\mathcal{O}_{X/\mathcal{P}}^\times \xrightarrow{d_\mathcal{B}\log} \Omega^1_\mathcal{B} \xrightarrow{d_\mathcal{B}}\Omega^{2,cl}_\mathcal{B})$.

The morphism of complexes $\Omega^{1,cl}_\mathcal{B} \to (\Omega^1_\mathcal{B} \to \Omega^{2,cl}_\mathcal{B})$ induces a canonical flat curving on a flat $\mathcal{B}$-connective structure.

\section{DQ-algebras}\label{section: DQ-algebras}
Throughout this section $X$ a $C^\infty$ manifold equipped with an integrable complex distribution $\mathcal{P}$, see Section \ref{section: distributions} for definitions and notations.

In the context of complex manifolds the notion of a DQ-algebra was introduced in \cite{KS}. 

\subsection{Star-products}
A \emph{star-product} on $\mathcal{O}_{X/\mathcal{P}}$ is a map
\begin{equation*}
\mathcal{O}_{X/\mathcal{P}}\otimes_\mathbb{C}\mathcal{O}_{X/\mathcal{P}} \to \mathcal{O}_{X/\mathcal{P}}[[t]]
\end{equation*}
of the form
\begin{equation}\label{star-product}
f\otimes g \mapsto f\star g = fg + \sum_{i=1}^\infty P_i(f,g)t^i,
\end{equation}
where $P_i$ are bi-differential operators. Such a map admits a unique $\mathbb{C}[[t]]$-bilinear extension
\begin{equation*}
\mathcal{O}_{X/\mathcal{P}}[[t]]\otimes_{\mathbb{C}[[t]]}\mathcal{O}_{X/\mathcal{P}}[[t]] \to \mathcal{O}_{X/\mathcal{P}}[[t]]
\end{equation*}
and the latter is required to define a structure of an associative unital $\mathbb{C}[[t]]$-algebra on $\mathcal{O}_{X/\mathcal{P}}[[t]]$.

\begin{prop}[(\cite{KS}, Proposition 2.2.3)]\label{prop: iso star-prod diff op}
Let $\star$ and $\star^\prime$ be star-products and let $\varphi\colon (\mathcal{O}_{X/\mathcal{P}}[[t]], \star) \to (\mathcal{O}_{X/\mathcal{P}}[[t]], \star^\prime)$ be a morphism of $\mathbb{C}[[t]]$-algebras. Then, there exists a unique sequence of differential operators $\{R_i \}_{i\geq 0}$ on $X$ such that $R_0 = 1$ and $\varphi(f) = \sum\limits_{i=0}^\infty R_i(f)t^i$ for any $f\in \mathcal{O}_{X/\mathcal{P}}$. In particular, $\varphi$ is an isomorphism.
\end{prop}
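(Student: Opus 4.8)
The plan is to show that $\varphi$ is completely determined by a sequence of $\mathbb{C}$-linear endomorphisms of $\mathcal{O}_{X/\mathcal{P}}$, that these endomorphisms are forced to be differential operators with the leading one equal to the identity, and finally that such a $\varphi$ is automatically invertible.

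First I would record that a morphism of $\mathbb{C}[[t]]$-algebras is in particular $\mathbb{C}[[t]]$-linear, hence continuous for the $t$-adic topology on the $t$-adically complete and separated module $\mathcal{O}_{X/\mathcal{P}}[[t]]$. Consequently $\varphi$ is determined by its restriction to $\mathcal{O}_{X/\mathcal{P}}$, and for $f\in\mathcal{O}_{X/\mathcal{P}}$ one may write $\varphi(f)=\sum_{i\ge0}R_i(f)t^i$ with $R_i\in\shEnd_{\mathbb{C}}(\mathcal{O}_{X/\mathcal{P}})$; uniqueness of the $R_i$ is then immediate, since they are the coefficients of this expansion. Writing $\star=\sum_k P_kt^k$ and $\star'=\sum_l P'_lt^l$ with $P_0(f,g)=P'_0(f,g)=fg$ and all $P_k,P'_l$ bi-differential, the multiplicativity $\varphi(f\star g)=\varphi(f)\star'\varphi(g)$ becomes, upon comparing coefficients of $t^m$, the family of identities
\begin{equation}\label{eq: mult coefficients}
\sum_{i+k=m}R_i(P_k(f,g))=\sum_{i+j+l=m}P'_l(R_i(f),R_j(g)).
\end{equation}

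Next I would establish $R_0=1$. The identity \eqref{eq: mult coefficients} at $m=0$ reads $R_0(fg)=R_0(f)R_0(g)$, while $\varphi(1)=1$ gives $R_0(1)=1$; thus $R_0$ is a unital $\mathbb{C}$-algebra endomorphism of the sheaf $\mathcal{O}_{X/\mathcal{P}}$. To see it is the identity I pass to stalks: for each $q\in X$ the stalk $\mathcal{O}_{X/\mathcal{P},q}$ is a local ring whose unique maximal ideal consists of the germs vanishing at $q$ (a non-vanishing $\dbar$-closed germ has a $\dbar$-closed inverse) and whose residue field is $\mathbb{C}$. Hence the only $\mathbb{C}$-algebra character of $\mathcal{O}_{X/\mathcal{P},q}$ is evaluation at $q$. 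Composing evaluation at $q$ with the stalk of $R_0$ yields another character, necessarily evaluation at $q$ again, so $R_0(f)(q)=f(q)$ for every germ $f$ and every $q$; therefore $R_0=1$.

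The heart of the argument is the claim that each $R_m$ is a differential operator, which I would prove by induction on $m$, the case $m=0$ being settled. Isolating in \eqref{eq: mult coefficients} the terms containing $R_m$ and using $R_0=1$ together with $P_0(f,g)=P'_0(f,g)=fg$, one obtains
\[
R_m(fg)-R_m(f)\,g-f\,R_m(g)=B_m(f,g),
\]
where $B_m$ is assembled from the $R_i$ with $i<m$ and the bi-differential operators $P_k,P'_l$; by the inductive hypothesis those $R_i$ are differential operators, so $B_m$ is a bi-differential operator, of some bounded order $d$ in its second argument. Rewriting this as $[R_m,f](g)=R_m(f)\,g+B_m(f,g)$ (where $f$ denotes multiplication by $f$) exhibits $[R_m,f]$ as a differential operator of order $\le d$ in the remaining variable; iterating the commutator with multiplication operators lowers this order by one at each step, so $\ad(f_0)\cdots\ad(f_{d+1})(R_m)=0$ for all $f_0,\dots,f_{d+1}$. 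By Grothendieck's commutator criterion — which, for the sheaf of smooth functions, agrees via Peetre's theorem with the analytic notion — $R_m$ is a differential operator of order $\le d+1$, closing the induction. The main obstacle here is the bookkeeping needed to confirm that $B_m$ really is bi-differential of controlled order, so that the commutator recursion terminates. Finally, the last assertion is formal: since $R_0=1$ we have $\varphi=\id+t\psi$ for a $\mathbb{C}[[t]]$-linear endomorphism $\psi$, which is invertible $t$-adically with inverse $\sum_{k\ge0}(-t\psi)^k$, and the inverse of an algebra morphism is again an algebra morphism, so $\varphi$ is an isomorphism.
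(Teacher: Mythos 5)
The paper does not actually prove this proposition: it is imported from \cite{KS}, Proposition 2.2.3, together with the remark that the proof of loc.\ cit.\ (written for $\mathcal{P}$ a complex structure) ``as well as the results it is based upon'' carry over to a general integrable distribution. Your argument is therefore a reconstruction of that outsourced proof, and in structure it is the standard one and is essentially complete: the $t$-adic continuity argument and uniqueness of the coefficients $R_i$; the coefficientwise multiplicativity identity; the stalkwise character argument forcing $R_0=1$ (here you correctly exploit that $R_0$ is a morphism of \emph{sheaves}, and that a nonvanishing $\dbar$-closed germ has a $\dbar$-closed inverse, so each stalk is a local $\mathbb{C}$-algebra with residue field $\mathbb{C}$); the induction isolating $R_m(fg)-R_m(f)g-fR_m(g)=B_m(f,g)$ with $B_m$ bi-differential of bounded order; and the $t$-adic Neumann series for invertibility.

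The one step you must restate is the appeal to ``Grothendieck's commutator criterion, which for the sheaf of smooth functions agrees via Peetre's theorem with the analytic notion.'' The algebra acted upon is $\mathcal{O}_{X/\mathcal{P}}$, not the sheaf of all smooth functions; when $\mathcal{P}$ is a complex structure (which is the original setting of \cite{KS}) it is the sheaf of holomorphic functions, where Peetre's theorem is unavailable and where locality genuinely fails to imply finite order: infinite-order operators such as $\sum_n (n!)^{-2}\partial^n$ are perfectly good local endomorphisms of $\mathcal{O}$. Moreover, even in the $C^\infty$ case Peetre's hypothesis is locality and its conclusion is locally finite order, neither of which matches what you use. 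Fortunately Peetre is not needed at all: locality is automatic because each $R_m$ is a morphism of sheaves, and the bounded iterated-commutator condition you established is exactly what yields finite order, by the following direct jet argument. If $\ad(f_0)\cdots\ad(f_n)(D)=0$ for all $f_i$, an easy induction shows $D(g_0\cdots g_n)(q)=0$ whenever all the germs $g_i$ vanish at $q$; hence $D(f)(q)$ depends only on $f$ modulo $\mathfrak{m}_q^{n+1}$, i.e.\ on the $n$-jet of $f$ at $q$ in the transverse coordinates furnished by integrability of $\mathcal{P}$ (Hadamard's lemma, resp.\ Taylor expansion in the holomorphic directions, places the remainder in $\mathfrak{m}_q^{n+1}$). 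Writing $D(f)(q)=\sum_{|\alpha|\le n}\tfrac{1}{\alpha!}\,\partial_y^\alpha f(q)\,D\bigl((y-y(q))^\alpha\bigr)(q)$ and expanding binomially shows the coefficients are sections of $\mathcal{O}_{X/\mathcal{P}}$, so $D=\sum_{|\alpha|\le n}a_\alpha\partial_y^\alpha$ is a differential operator of order $\le n$. With this substitution your induction closes; note that this repair is precisely the adaptation of ``the results it is based upon'' that the paper's remark alludes to, so it is the part of the argument that least deserves to be waved through.
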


\begin{remark}
The paper \cite{KS} and, in particular, Proposition 2.2.3 of loc. cit. pertain to the holomorphic context, i.e. the case when $\mathcal{P}$ is a complex structure. However, it is easy to see that the proof of Proposition 2.2.3 as well as the results it is based upon carry over to the case of a general integrable distribution.
\end{remark}

For a star-product given by \eqref{star-product} the operation
\begin{equation}\label{star product assoc poisson}
f\otimes g \mapsto P_1(f,g) - P_1(g,f)
\end{equation}
is a Poisson bracket on $\mathcal{O}_{X/\mathcal{P}}$ which we refer to as \emph{the associated Poisson bracket}. It follows from Lemma \ref{lemma: isom dq same poisson} below that isomorphic star-products give rise to the same associated Poisson bracket.

\begin{definition}
A star-product given by \eqref{star-product} is said to be \emph{special} if $P_1$ is skew-symmetric.
\end{definition}

\begin{remark}
If the star-product is special, then $P_1(f,g) = \dfrac12\{f,g\}$, where the latter is the associated Poisson bracket.
\end{remark}

The following lemma is well-known.

\begin{lemma}\label{lemma: iso to special}
Any star-product is locally isomorphic to a special one.
\end{lemma}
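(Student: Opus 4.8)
The plan is to realize the desired isomorphism as a gauge transformation of the form $\varphi = \id + tD$, extended $\mathbb{C}[[t]]$-linearly, where $D$ is a locally defined differential operator on $X$ chosen so as to cancel the symmetric part of $P_1$. By Proposition~\ref{prop: iso star-prod diff op} such a $\varphi$ is automatically an isomorphism, and since $\varphi$ is an invertible $\mathbb{C}[[t]]$-linear map the product $\star'$ defined by $f\star' g := \varphi(\varphi^{-1}(f)\star\varphi^{-1}(g))$ is again a star-product, with respect to which $\varphi$ is tautologically an isomorphism. It therefore suffices to choose $D$ so that the first-order term of $\star'$ is skew-symmetric.

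First I would record the constraints that associativity and unitality impose on $P_1$. Extracting the coefficient of $t$ from the associativity of $\star$ yields the Hochschild 2-cocycle identity
\[
f P_1(g,h) - P_1(fg,h) + P_1(f,gh) - P_1(f,g)h = 0 ,
\]
while $1\star f = f\star 1 = f$ forces $P_1(1,f) = P_1(f,1) = 0$. Thus $P_1$ is a bidifferential Hochschild cocycle vanishing on constants.

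The \emph{key step} is the local cohomological input: the Hochschild--Kostant--Rosenberg description of the bidifferential Hochschild complex of $\mathcal{O}_{X/\mathcal{P}}$, according to which, locally on $X$, every such cocycle is cohomologous to its skew-symmetrization, the latter being a biderivation. Writing $A := \tfrac12\bigl(P_1(f,g) - P_1(g,f)\bigr)$ for the skew-symmetrization, which the cocycle identity forces to be a biderivation, the classes of $P_1$ and $A$ coincide, so their difference is a coboundary. This produces, locally, a differential operator $D$ (which may be taken to vanish on constants) with
\[
\tfrac12\bigl(P_1(f,g)+P_1(g,f)\bigr) = f D(g) - D(fg) + D(f)g =: (\delta D)(f,g) ,
\]
the right-hand side being the Hochschild coboundary of $D$, which is automatically symmetric in $f$ and $g$.

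With this $D$ in hand I would compute the first-order term of $\star'$. A direct expansion using $\varphi^{-1} = \id - tD + O(t^2)$ gives $P_1'(f,g) = P_1(f,g) - (\delta D)(f,g)$. Since $\delta D$ is symmetric and, by the previous display, equals the symmetric part of $P_1$, the symmetric part of $P_1'$ vanishes; hence $P_1'$ is skew-symmetric and $\star'$ is special. The only substantive point is the local HKR statement used in the key step, and I expect it to be the main obstacle; the remaining ingredients (the cocycle identity, the invertibility of $\varphi$ over $\mathbb{C}[[t]]$, and the first-order computation) are routine.
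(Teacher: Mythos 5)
Your proposal is correct and follows essentially the same route as the paper's proof: decompose $P_1$ into its symmetric and skew-symmetric parts, invoke the (local, differentiable) Hochschild--Kostant--Rosenberg theorem to write the symmetric part as the Hochschild coboundary $\delta D$ of a differential operator, and gauge it away by a $\mathbb{C}[[t]]$-linear transformation congruent to the identity modulo $t$. The only cosmetic difference is that the paper conjugates by $\exp(tQ)$ rather than $\id + tD$, which has the same effect on the coefficient of $t$ and changes nothing of substance.
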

\begin{proof}
Let $P_1^+$ (respectively, $P_1^-$) denote the skew-symmetrization (respectively, the symmetrization) of $P_1$. Associativity of the star-product implies that $P_1$, $P_1^+$ and $P_1^-$ are Hochschild cocycles of degree two on $\mathcal{O}_{X/\mathcal{P}}$. The Hochschild-Kostant-Rosenberg theorem says that, locally on $X$, $P_1^+$ is a bi-vector and  $P_1^-$ is a Hochschild coboundary, i.e., locally on $X$, there exists a differential operator $Q$ (acting on $\mathcal{O}_{X/\mathcal{P}}$) such that $P_1^-(f,g) = Q(fg) - Q(f)g - fQ(g)$. The star-product $\star^\prime$ defined by $f\star^\prime g = \exp(-tQ)(\exp(tQ)(f)\star\exp(tQ)(g)) = fg + P_1^{+}(f,g)t + \cdots$ is special.
\end{proof}

\subsection{DQ-algebras}
A DQ-algebra is a sheaf of $\mathbb{C}[[t]]$-algebras locally isomorphic to a star-product.
For a DQ-algebra $\mathbb{A}$ there is a canonical isomorphism $\mathbb{A}/t\cdot\mathbb{A}\cong \mathcal{O}_{X/\mathcal{P}}$. Therefore, there is a canonical map  (reduction modulo $t$) $\mathbb{A} \xrightarrow{\sigma} \mathcal{O}_{X/\mathcal{P}}$ of $\mathbb{C}[[t]]$-algebras.

A morphism of DQ-algebras is a morphism of sheaves of $\mathbb{C}[[t]]$-algebras.

For an open subset $U \subseteq X$ let $\DQA_{X/\mathcal{P}}(U)$ denote the category of DQ-algebras on $U$, where $U$ is equipped with the restriction of $\mathcal{P}$. The assignment $U \mapsto \DQA_{X/\mathcal{P}}(U)$ defines a stack on $X$ denoted $\DQA_{X/\mathcal{P}}$.

In what follows, we shall denote $\shHom_{\DQA_{X/\mathcal{P}}}$ simply by $\shHom_{\DQA}$.

\subsection{The associated Poisson structure}
Suppose that $\mathbb{A}$ is a DQ-algebra. The composition
\begin{equation*}
\mathbb{A}\otimes\mathbb{A} \xrightarrow{[.,.]} \mathbb{A} \xrightarrow{\sigma} \mathcal{O}_{X/\mathcal{P}}
\end{equation*}
is trivial. Therefore, the commutator $\mathbb{A}\otimes\mathbb{A} \xrightarrow{[.,.]} \mathbb{A}$ takes values in $t\mathbb{A}$. The composition
\begin{equation*}
\mathbb{A}\otimes\mathbb{A} \xrightarrow{[.,.]} t\mathbb{A} \xrightarrow{t^{-1}} \mathbb{A} \xrightarrow{\sigma} \mathcal{O}_{X/\mathcal{P}}
\end{equation*}
factors uniquely as
\begin{equation*}
\mathbb{A}\otimes\mathbb{A} \xrightarrow{\sigma\otimes\sigma} \mathcal{O}_{X/\mathcal{P}}\otimes\mathcal{O}_{X/\mathcal{P}} \xrightarrow{\{.,.\}} \mathcal{O}_{X/\mathcal{P}}
\end{equation*}
The latter map, $\{.,.\}\colon \mathcal{O}_{X/\mathcal{P}}\otimes\mathcal{O}_{X/\mathcal{P}} \to \mathcal{O}_{X/\mathcal{P}}$ is a Poisson bracket on $\mathcal{O}_{X/\mathcal{P}}$, hence corresponds to a bi-vector $\poisson\in \Gamma(X;\bigwedge^2\mathcal{T}_{X/\mathcal{P}})$. If $\mathbb{A}$ is a star-product we recover the \eqref{star product assoc poisson}.

\begin{lemma}\label{lemma: isom dq same poisson}
Locally isomorphic DQ-algebras give rise to the same associated Poisson bracket.
\end{lemma}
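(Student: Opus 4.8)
The plan is to show that the Poisson bracket extracted from a DQ-algebra $\mathbb{A}$ in the preceding subsection depends only on the local isomorphism class of $\mathbb{A}$. The statement is local, so I may assume $\mathbb{A}$ and $\mathbb{A}'$ are both star-products on $\mathcal{O}_{X/\mathcal{P}}$ with structure maps $\star$ and $\star'$, and that $\varphi\colon(\mathcal{O}_{X/\mathcal{P}}[[t]],\star)\to(\mathcal{O}_{X/\mathcal{P}}[[t]],\star')$ is an isomorphism of $\mathbb{C}[[t]]$-algebras. By Proposition \ref{prop: iso star-prod diff op} such a $\varphi$ is automatically given by a formal series of differential operators $\varphi(f)=\sum_{i\geq0}R_i(f)t^i$ with $R_0=\id$. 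The goal is to verify that the two associated brackets agree, where the bracket of $\star$ is $\{f,g\}=P_1(f,g)-P_1(g,f)$ (formula \eqref{star product assoc poisson}) and similarly for $\star'$ with $P'_1$.

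First I would write out what it means for $\varphi$ to be an algebra morphism at the level of the $t$-linear term. The identity $\varphi(f\star g)=\varphi(f)\star'\varphi(g)$, expanded to first order in $t$, yields
\begin{equation*}
P_1(f,g)+R_1(fg)=P'_1(f,g)+R_1(f)g+fR_1(g).
\end{equation*}
Here the left side comes from $\varphi$ applied to $fg+P_1(f,g)t+\cdots$ and the right side from multiplying $f+R_1(f)t+\cdots$ and $g+R_1(g)t+\cdots$ under $\star'$, keeping only the coefficient of $t^1$. The key structural point is that $R_1$ is a differential operator and the terms $R_1(fg)-R_1(f)g-fR_1(g)$ form the Hochschild coboundary $\delta R_1$ of $R_1$, which is manifestly symmetric in $f$ and $g$.

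The conclusion then follows by antisymmetrization. Taking the difference of the first-order relation with itself after swapping $f$ and $g$, the coboundary terms cancel because $\delta R_1$ is symmetric, and one obtains
\begin{equation*}
P_1(f,g)-P_1(g,f)=P'_1(f,g)-P'_1(g,f),
\end{equation*}
i.e. $\{f,g\}=\{f,g\}'$. Since the associated Poisson bi-vector is determined by this bracket via $\{f,g\}=\poisson(df,dg)$, the two DQ-algebras give rise to the same $\poisson$. I expect the only subtlety to be bookkeeping: one must be careful that the bracket in formula \eqref{star product assoc poisson} is defined as the full commutator coefficient $P_1(f,g)-P_1(g,f)$ rather than $P_1$ itself, so that it is precisely the symmetric part $R_1(fg)-R_1(f)g-fR_1(g)$ that drops out under antisymmetrization; the symmetric components of $P_1$ and $P'_1$ may well differ, but they do not enter the bracket. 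No genuine obstacle arises, as the whole argument is a first-order expansion combined with the automatic regularity of $\varphi$ supplied by Proposition \ref{prop: iso star-prod diff op}.
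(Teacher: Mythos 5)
Your proof is correct, but it takes a more computational route than the paper's. The paper never reduces to star-products: it takes an arbitrary (local) morphism $\Phi\colon\mathbb{A}_1\to\mathbb{A}_2$ of DQ-algebras, uses only that $\Phi$ induces the identity modulo $t$, and observes that $\Phi$ preserves commutators on the nose, so that for lifts $\widetilde f,\widetilde g$ of $f,g$ one has $t\{f,g\}_2 \equiv \Phi(\widetilde f)\Phi(\widetilde g)-\Phi(\widetilde g)\Phi(\widetilde f) = \Phi\bigl(\widetilde f\widetilde g-\widetilde g\widetilde f\bigr) \equiv t\{f,g\}_1 \pmod{t^2\mathbb{A}_2}$; here the bracket is the intrinsic commutator one, and no expansion in (bi)differential operators is needed. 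You instead localize to star-products, invoke Proposition \ref{prop: iso star-prod diff op} to write $\varphi=\sum_i R_it^i$, and identify $P_1-P_1'$ as the Hochschild coboundary of $R_1$, which is symmetric and hence killed by antisymmetrization. Both arguments rest on the same key input --- every morphism of DQ-algebras is the identity modulo $t$ (your $R_0=\id$) --- and on the same mechanism (commutators do not see symmetric terms), but yours is the coordinate version of it. The cost is the explicit expansion plus the reduction step, where you implicitly use that the intrinsic bracket of a DQ-algebra restricts, under a local trivialization, to the formula \eqref{star product assoc poisson}; this is legitimate because the canonical identification $\mathbb{A}/t\mathbb{A}\cong\mathcal{O}_{X/\mathcal{P}}$ makes any local trivialization the identity modulo $t$, but it is worth being aware that this compatibility is itself a cousin of the statement being proved. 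The benefit is a finer local conclusion: the first-order terms of isomorphic star-products differ by an explicit symmetric Hochschild coboundary $R_1(f)g+fR_1(g)-R_1(fg)$, not merely in having equal antisymmetrizations --- a standard and useful fact in the classification of star-products that the paper's softer argument does not record.
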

\begin{proof}
Suppose that $\mathbb{A}_i$, $i=1,2$ is a DQ-algebras with associated Poisson bracket $\{.,.\}_i$ and $\Phi \colon \mathbb{A}_1  \to \mathbb{A}_2$ is a morphism of such. Let $f,g \in \mathcal{O}_{X/\mathcal{P}}$, $\widetilde{f}, \widetilde{g} \in \mathbb{A}_1$ with $f = \widetilde{f} + t\mathbb{A}_1$, $g = \widetilde{g} + t\mathbb{A}_1$. 

Since $\Phi$ induces the identity map modulo $t$,
\[
t\cdot\{f,g\}_2 + t^2\mathbb{A}_2 = \Phi(\widetilde{f})\Phi(\widetilde{g}) -\Phi(\widetilde{g})\Phi(\widetilde{f}) + t^2\mathbb{A}_2 = \Phi(\widetilde{f}\widetilde{g} - \widetilde{g}\widetilde{f} + t^2\mathbb{A}_1) = t\cdot\{f,g\}_1 + t^2\mathbb{A}_2
\]
Thus, if $\mathbb{A}_i$, $i=1,2$ are locally isomorphic, then the associated Poisson brackets are locally equal hence equal.
\end{proof}

\subsection{Standard sections}
Let $\mathbb{A}$ be a DQ-algebra. Recall (\cite{KS}, Definition 2.2.6) that a $\mathbb{C}$-linear section $\phi\colon \mathcal{O}_{X/\mathcal{P}} \to \mathbb{A}$ of the map $\mathbb{A} \xrightarrow{\sigma} \mathcal{O}_{X/\mathcal{P}}$ is called \emph{standard} if there exist bi-differential operators $P_i$, $i=0,1,\ldots$, such that for any $f, g\in\mathcal{O}_{X/\mathcal{P}}$
\begin{equation}\label{P}
\phi(f)\phi(g) = \phi(fg) + \sum_{i\geq 1}\phi(P_i(f,g))t^i,
\end{equation}
where the left-hand side product is computed in $\mathbb{A}$.
In this case
\begin{equation*}
f\otimes g\mapsto f\star_\phi g := fg + \sum_{i\geq 1}P_i(f,g)t^i
\end{equation*}
defines a star-product.

A standard section $\phi\colon \mathcal{O}_{X/\mathcal{P}} \to \mathbb{A}$ extends by $t$-linearity to a morphism of DQ-algebras $\widetilde{\phi} \colon (\mathcal{O}_{X/\mathcal{P}}[[t]], \star_\phi) \to \mathbb{A}$. Conversely, a morphism of DQ-algebras $\varphi \colon (\mathcal{O}_{X/\mathcal{P}}[[t]], \star) \to \mathbb{A}$ restricts to a standard section $\phi := \varphi\vert_{\mathcal{O}_{X/\mathcal{P}}}$ such that $\star_\phi = \star$ and $\widetilde{\phi} = \varphi$.

We will call a standard section $\phi$ \emph{special} if the corresponding star-product $\star_\phi$ is special.

\begin{notation}
Let $\Sigma(\mathbb{A})$ denote the sheaf of locally defined special standard sections.\qed
\end{notation}

For $U\subseteq X$ and open subset, $\phi_1, \phi_2 \in \Sigma(\mathbb{A})(U)$ and $k = 1,2,\ldots$ we say that $\phi_1$ and $\phi_2$ are \emph{equivalent modulo $t^{k+1}$} if the compositions $\mathcal{O}_{X/\mathcal{P}}(U) \xrightarrow{\phi_i} \mathbb{A}(U) \to \mathbb{A}(U)/t^{k+1}\mathbb{A}(U)$, $i=1,2$, coincide. Clearly, ``equivalence modulo $t^{k+1}$" is an equivalence relation on $\Sigma(\mathbb{A})$ which we denote $\sim_k$. We denote by $\Sigma_k(\mathbb{A})$ the sheaf associated to the presheaf $U \mapsto \Sigma(\mathbb{A})(U)/\sim_k$.

\begin{prop}\label{prop: special std sec exist locally}
Let $\mathbb{A}$ be a DQ-algebra.
\begin{enumerate}
\item The sheaf $\Sigma(\mathbb{A})$ is locally non-empty.

\item The quotient map $\Sigma(\mathbb{A}) \to \Sigma_k(\mathbb{A})$ is locally surjective on sets of sections.

\item For a morphism $\Phi \colon \mathbb{A}_0 \to \mathbb{A}_1$ of DQ-algebras and $\phi \in \Sigma(\mathbb{A}_0)$ the composition $\Phi\circ\phi$ is a special standard section, i.e. $\Phi\circ\phi \in \Sigma(\mathbb{A}_1)$.
\end{enumerate}
\end{prop}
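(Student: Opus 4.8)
The three assertions are of rather different character, so the plan is to treat them in turn, relying throughout on the structure theory already established. For part (1), I would argue locally: a DQ-algebra $\mathbb{A}$ is by definition locally isomorphic to a star-product $(\mathcal{O}_{X/\mathcal{P}}[[t]],\star)$, so choosing such a local isomorphism $\varphi$ and a point, it suffices to produce a special standard section of the model algebra $(\mathcal{O}_{X/\mathcal{P}}[[t]],\star)$. By Lemma \ref{lemma: iso to special}, locally $\star$ is isomorphic to a special star-product $\star'$ via an isomorphism $\psi$ of the form $f\mapsto\sum R_i(f)t^i$ (cf.\ Proposition \ref{prop: iso star-prod diff op}). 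The tautological inclusion $\mathcal{O}_{X/\mathcal{P}}\hookrightarrow(\mathcal{O}_{X/\mathcal{P}}[[t]],\star')$ is a standard section whose associated star-product is $\star'$, hence special; transporting it back through $\psi^{-1}$ and $\varphi$ yields a special standard section of $\mathbb{A}$ defined near the chosen point. The only thing to verify is that standardness and speciality are preserved under composition with an algebra isomorphism, which is exactly the content recorded for standard sections and is what makes part (3) relevant here.

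For part (3), the claim is that $\Phi\circ\phi$ is again a special standard section whenever $\phi\in\Sigma(\mathbb{A}_0)$ and $\Phi\colon\mathbb{A}_0\to\mathbb{A}_1$ is a morphism of DQ-algebras. Since $\Phi$ induces the identity modulo $t$, the composite $\Phi\circ\phi\colon\mathcal{O}_{X/\mathcal{P}}\to\mathbb{A}_1$ is again a $\mathbb{C}$-linear section of $\sigma\colon\mathbb{A}_1\to\mathcal{O}_{X/\mathcal{P}}$. Applying $\Phi$ to the defining relation \eqref{P} for $\phi$ and using that $\Phi$ is a $\mathbb{C}[[t]]$-algebra homomorphism gives $(\Phi\phi)(f)(\Phi\phi)(g)=(\Phi\phi)(fg)+\sum_{i\geq1}(\Phi\phi)(P_i(f,g))t^i$ with the \emph{same} bi-differential operators $P_i$, so $\Phi\circ\phi$ is standard with $\star_{\Phi\circ\phi}=\star_\phi$. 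Since $\star_\phi$ is special by hypothesis, so is $\star_{\Phi\circ\phi}$, and therefore $\Phi\circ\phi\in\Sigma(\mathbb{A}_1)$. This part is essentially a formal manipulation and should present no real difficulty.

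Part (2) is the one I expect to be the main obstacle. Here one must show that any section of $\Sigma_k(\mathbb{A})$, i.e.\ any special standard section specified only modulo $t^{k+1}$, lifts locally to an honest special standard section in $\Sigma(\mathbb{A})$. The natural strategy is order-by-order correction: given a special standard $\phi$ and a prescribed modification modulo $t^{k+1}$, one seeks to adjust $\phi$ by a term of the form $f\mapsto\phi(D(f))t^{k+1}$ (with $D$ a differential operator) so as to match the prescribed class while remaining standard and special. The constraint that the corrected section remain \emph{special} means that the degree-one part $P_1$ of the associated star-product must stay skew-symmetric; since a correction at order $t^{k+1}$ with $k\geq1$ affects the star-product only in degrees $\geq 2$, the skew-symmetry of $P_1$ is automatically preserved, so speciality is not the obstruction. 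The genuine issue is that the set of special standard sections need not be a torsor under an obvious sheaf, so surjectivity must be argued by directly exhibiting the lift; I would reduce, via the local isomorphism to a special star-product used in part (1), to the model case where the available corrections $\phi\mapsto\phi\circ\exp(t^{k+1}D)$ are readily seen to realize every prescribed class modulo $t^{k+2}$, and then iterate and pass to the $t$-adic limit to obtain a genuine lift. The bookkeeping of how a correction at one order interacts with standardness at higher orders is where the calculation really lives.
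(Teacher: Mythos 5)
Parts (1) and (3) of your proposal are correct and coincide with the paper's own arguments: for (1) one reduces to the local star-product model, uses Lemma \ref{lemma: iso to special} to produce an isomorphism $\varphi\colon(\mathcal{O}_{X/\mathcal{P}}[[t]],\star')\to\mathbb{A}$ with $\star'$ special, and takes $\varphi\vert_{\mathcal{O}_{X/\mathcal{P}}}$; for (3) one applies $\Phi$ to the relation \eqref{P} and notes that the bi-differential operators $P_i$ are unchanged, so $\star_{\Phi\circ\phi}=\star_\phi$.

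Part (2) contains a genuine gap, and it originates in a misreading of what $\Sigma_k(\mathbb{A})$ is. By definition, $\Sigma_k(\mathbb{A})$ is the sheaf associated to the presheaf $U\mapsto\Sigma(\mathbb{A})(U)/\sim_k$, so a section of $\Sigma_k(\mathbb{A})$ is, locally on $X$, the class of an honest special standard section; it is \emph{not} a ``special standard section specified only modulo $t^{k+1}$'' in the sense of truncated data satisfying truncated identities that must then be extended to all orders. Consequently the assertion carries no jet-extension problem: local surjectivity of $\Sigma(\mathbb{A})\to\Sigma_k(\mathbb{A})$ is essentially built into the construction, being the standard fact that the map from a presheaf (here the quotient presheaf, onto which $\Sigma(\mathbb{A})$ surjects objectwise) to its associated sheaf is an isomorphism on stalks, hence locally surjective. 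The order-by-order correction scheme you outline addresses a different, strictly stronger statement --- that truncated standard sections extend to genuine ones --- which is neither asserted nor needed; and even for that statement your text is not a proof, since the decisive verification is explicitly deferred (``the bookkeeping \dots is where the calculation really lives'').

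For comparison, the paper's proof of (2) is short and concrete rather than abstract: working locally in the star-product model, the projection $q\colon\mathbb{A}\to\mathbb{A}/t^{k+1}\mathbb{A}$ admits a $\mathbb{C}$-linear splitting $s$ (truncation of power series), and post-composition with $s\circ q$ preserves $\Sigma(\mathbb{A})$ and depends only on the class modulo $t^{k+1}$, hence induces a map $\Sigma_k(\mathbb{A})\to\Sigma(\mathbb{A})$ splitting the quotient map. The one point left implicit there, which you would need to check, is that the truncation $s\circ q\circ\phi$ of $\phi\in\Sigma(\mathbb{A})$ is again special standard; this holds because $s\circ q\circ\phi=\widetilde{\phi}\circ S\vert_{\mathcal{O}_{X/\mathcal{P}}}$ for an operator $S=1+\sum_{i\geq k+1}S_it^i$ with $S_i$ differential, so it is the restriction of an isomorphism onto $\mathbb{A}$ from the star-product $f\star''g:=S^{-1}\bigl(S(f)\star_\phi S(g)\bigr)$, which agrees with $\star_\phi$ modulo $t^{k+1}$ and in particular has the same skew-symmetric first-order term.
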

\begin{proof}
Since the question is local we may assume that $\mathbb{A} = (\mathcal{O}_{X/\mathcal{P}}[[t]], \star)$ is a star product given by \eqref{star-product}.
\begin{enumerate}
\item It follows from Lemma \ref{lemma: iso to special} that locally there exists an isomorphism 
$\varphi \colon (\mathcal{O}_{X/\mathcal{P}}[[t]], \star) \to \mathbb{A}$ with $\star$ a special star-product. Then $\varphi\vert_{\mathcal{O}_{X/\mathcal{P}}}$ is a special standard section.

\item Since the question is local we may assume that the map $q \colon \mathbb{A} \to \mathbb{A}/t^{k+1}\mathbb{A}$ admits a splitting $s \colon \mathbb{A}/t^{k+1}\mathbb{A} \to \mathbb{A}$, $q\circ s = \id$. The composition $s \circ q \colon \mathbb{A}  \to \mathbb{A}$ preserves equivalence modulo $t^{k+1}$, hence induces the map $s \circ q \colon \Sigma_k(\mathbb{A}) \to \Sigma(\mathbb{A})$. Since the composition $\Sigma_k(\mathbb{A}) \to \Sigma(\mathbb{A}) \to \Sigma_k(\mathbb{A})$ is the identity map the claim follows.

\item Since $\Phi$ induces the identity map modulo $t$ the composition $\Phi\circ\phi$ is a section. By Proposition \ref{prop: iso star-prod diff op} the terms of
\[
(\Phi\circ\phi)(f)(\Phi\circ\phi)(g) = (\Phi\circ\phi)(fg) + \sum_{i\geq 1}(\Phi\circ\phi)(P_i(f,g))t^i,
\]
are bi-differential operators and therefore the section $\Phi\circ\phi$ is standard if $\phi$ is. The formula shows that, if $\phi$ is special, then so is $\Phi\circ\phi$.
\end{enumerate}
\end{proof}

Thus, the assignment $\mathbb{A} \mapsto \Sigma(\mathbb{A})$, $(\mathbb{A}_0 \xrightarrow{\Phi}  \mathbb{A}_1) \mapsto (\Sigma(\Phi) \colon \phi \mapsto \Phi\circ\phi)$ defines a functor, denoted $\Sigma$, on the category of DQ-algebras.

It is clear that the map $\Sigma(\Phi) \colon \Sigma(\mathbb{A}_0) \to \Sigma(\mathbb{A}_1)$ induced by the morphism of DQ-algebras $\Phi \colon \mathbb{A}_0 \to \mathbb{A}_1$ preserves equivalence modulo $t^k$. Thus, the functor $\Sigma$ gives rise to functors $\Sigma_k$ for all $k = 1,2,\ldots$.

\subsection{The functor $\Sigma_1$}
Suppose that $\mathbb{A}$ is a DQ-algebra with the associated Poisson tensor $\pi$ and the corresponding Lie algebroid (structure on $\Omega^1_{X/\mathcal{P}}$) denoted $\Pi$.

\begin{lemma}\label{lemma: change of std sec special}
Suppose that $\phi_0$ and  $\phi_1$ are standard sections of $\mathbb{A}$ with the corresponding star-products $\star_{(j)} := \star_{\phi_i}$ given by
\[
f\star_{(j)} g = fg + P^{(j)}_1(f,g)t + P^{(j)}_2(f,g)t^2 + \cdots
\]
for $j=0,1$. Let $R = 1 + \sum\limits_{i=1}^\infty R_i t^i$, $R_i$ differential operators, denote the solution of $\widetilde{\phi_1} = \widetilde{\phi_0}\circ R$ uniquely determined by Proposition \ref{prop: iso star-prod diff op}. Then, $P^{(0)}_1 = P^{(1)}_1$ if and only if $R_1$ is a derivation, i.e. a section of $\mathcal{T}_{X/\mathcal{P}}$.
\end{lemma}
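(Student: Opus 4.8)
The plan is to translate the relation $\widetilde{\phi_1} = \widetilde{\phi_0}\circ R$ into a direct comparison of the two star-products $\star_{(0)}$ and $\star_{(1)}$, and then to read off the claim by examining the coefficient of $t$. Since $\widetilde{\phi_0}$ is a $\mathbb{C}[[t]]$-algebra isomorphism $(\mathcal{O}_{X/\mathcal{P}}[[t]], \star_{(0)}) \to \mathbb{A}$ and likewise for $\widetilde{\phi_1}$ with $\star_{(1)}$, the operator $R = 1 + R_1 t + \cdots$ is by construction an isomorphism of star-product algebras $(\mathcal{O}_{X/\mathcal{P}}[[t]], \star_{(1)}) \to (\mathcal{O}_{X/\mathcal{P}}[[t]], \star_{(0)})$. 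Concretely, this means
\begin{equation*}
R(f \star_{(1)} g) = R(f) \star_{(0)} R(g)
\end{equation*}
for all $f, g \in \mathcal{O}_{X/\mathcal{P}}$.

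First I would expand both sides of this identity in powers of $t$ and collect the coefficient of $t^1$. On the left, using $R = 1 + R_1 t + \cdots$ and $f \star_{(1)} g = fg + P^{(1)}_1(f,g)t + \cdots$, the $t$-coefficient is $P^{(1)}_1(f,g) + R_1(fg)$. On the right, $R(f) \star_{(0)} R(g) = (f + R_1(f)t + \cdots)\star_{(0)}(g + R_1(g)t + \cdots)$, whose $t$-coefficient is $P^{(0)}_1(f,g) + R_1(f)g + f R_1(g)$. Equating the two gives the key formula
\begin{equation}\label{eqn: p1 difference}
P^{(1)}_1(f,g) - P^{(0)}_1(f,g) = R_1(f)g + f R_1(g) - R_1(fg).
\end{equation}

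With \eqref{eqn: p1 difference} in hand the lemma is immediate in both directions. The right-hand side is precisely the (negative of the) failure of $R_1$ to be a derivation, i.e. it vanishes identically in $f, g$ if and only if $R_1$ satisfies the Leibniz rule $R_1(fg) = R_1(f)g + f R_1(g)$. A differential operator satisfying the Leibniz rule is exactly a derivation of $\mathcal{O}_{X/\mathcal{P}}$, hence a section of $\mathcal{T}_{X/\mathcal{P}}$ by the identification of derivations with vector fields. Thus $P^{(0)}_1 = P^{(1)}_1$ if and only if the right-hand side of \eqref{eqn: p1 difference} is zero, if and only if $R_1$ is a derivation.

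I do not expect any serious obstacle here: the content is entirely the bookkeeping of the $t$-linear term, and Proposition \ref{prop: iso star-prod diff op} has already done the nontrivial work of guaranteeing that $R$ exists, is unique, and is given by a sequence of differential operators (so that $R_1$ is a well-defined differential operator to which the derivation criterion applies). The only point requiring a word of care is the orientation of $R$: one must verify that $\widetilde{\phi_1} = \widetilde{\phi_0}\circ R$ makes $R$ intertwine the star-products in the direction stated above rather than its inverse, but since passing to $R^{-1} = 1 - R_1 t + \cdots$ merely changes the overall sign of the right-hand side of \eqref{eqn: p1 difference}, it does not affect the vanishing criterion and the equivalence stands either way.
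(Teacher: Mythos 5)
Your proposal is correct and follows essentially the same route as the paper's own proof: both translate $\widetilde{\phi_1} = \widetilde{\phi_0}\circ R$ into the intertwining identity $R(f \star_{(1)} g) = R(f)\star_{(0)} R(g)$, expand both sides to first order in $t$, and identify $P^{(1)}_1(f,g) - P^{(0)}_1(f,g)$ with the failure of $R_1$ to satisfy the Leibniz rule. Your extra remark on the orientation of $R$ is a harmless addition; the equivalence is insensitive to replacing $R$ by $R^{-1}$, exactly as you observe.
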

\begin{proof}
The operator $R$ defines a morphism of algebras $(\mathcal{O}_{X/\mathcal{P}}[[t]], \star_{(1)}) \to (\mathcal{O}_{X/\mathcal{P}}[[t]], \star_{(0)})$, i.e.
\[
R(f) \star_{(0)} R(g) = R(f \star_{(1)} g) .
\]
Comparing the calculations
\[
R(f) \star_{(0)} R(g) = fg + (fR_1(g) + R_1(f)g + P^{(0)}_1(f,g))t +  \dots .
\]
and
\[
R(f \star_{(1)} g) = fg + (R_1(fg) + P^{(1)}_1(f,g))t + \dots .
\]
one concludes that
\[
P^{(0)}_1(f,g) = P^{(1)}_1(f,g) \text{ if and only if } fR_1(g) + R_1(f)g = R_1(fg) ,
\]
i.e. $R_1$ is a derivation.
\end{proof}

\begin{cor}\label{cor: change of std sec special}
In the notation of Lemma \ref{lemma: change of std sec special}, suppose in addition that $\phi_0$ is special. Then, $\phi_1$ is special if and only if $R_1$ is a derivation, i.e. a section of $\mathcal{T}_{X/\mathcal{P}}$.
\end{cor}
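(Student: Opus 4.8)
The plan is to reduce the corollary to Lemma \ref{lemma: change of std sec special} by establishing that, \emph{under the additional hypothesis that $\phi_0$ is special}, the condition that $\phi_1$ be special is equivalent to the equality $P^{(0)}_1 = P^{(1)}_1$ of the first-order terms of the two star-products. Once this equivalence is in place, the lemma identifies $P^{(0)}_1 = P^{(1)}_1$ with $R_1$ being a derivation, and chaining the two equivalences yields the claim.

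First I would record that $\star_{(0)}$ and $\star_{(1)}$ share the same associated Poisson bracket. Indeed, both are the star-products attached to standard sections of the single DQ-algebra $\mathbb{A}$, whose associated Poisson bracket is the $\{\,\cdot\,,\,\cdot\,\}$ corresponding to $\pi$; equivalently, $\widetilde{\phi_1} = \widetilde{\phi_0}\circ R$ exhibits $\star_{(0)}$ and $\star_{(1)}$ as isomorphic, so Lemma \ref{lemma: isom dq same poisson} applies. At the level of first-order terms this says precisely that the skew-symmetrizations coincide, i.e. $P^{(0)}_1(f,g) - P^{(0)}_1(g,f) = P^{(1)}_1(f,g) - P^{(1)}_1(g,f) = \{f,g\}$ for all $f,g \in \mathcal{O}_{X/\mathcal{P}}$.

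Next I would unwind the definition of specialness. Since $\phi_0$ is special, $P^{(0)}_1$ is skew-symmetric, so $P^{(0)}_1(f,g) = \tfrac{1}{2}\{f,g\}$. For the forward direction, if $\phi_1$ is special then $P^{(1)}_1$ is skew-symmetric as well, whence $P^{(1)}_1(f,g) = \tfrac{1}{2}\bigl(P^{(1)}_1(f,g) - P^{(1)}_1(g,f)\bigr) = \tfrac{1}{2}\{f,g\} = P^{(0)}_1(f,g)$; that is, $P^{(0)}_1 = P^{(1)}_1$. Conversely, if $P^{(0)}_1 = P^{(1)}_1$ then $P^{(1)}_1$ inherits skew-symmetry from $P^{(0)}_1$, so $\phi_1$ is special. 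This gives the equivalence ``$\phi_1$ special $\iff P^{(0)}_1 = P^{(1)}_1$''.

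Finally I would invoke Lemma \ref{lemma: change of std sec special}, which asserts that $P^{(0)}_1 = P^{(1)}_1$ holds if and only if $R_1$ is a derivation (a section of $\mathcal{T}_{X/\mathcal{P}}$). Combining with the previous step completes the argument. I do not expect a genuine obstacle here: the corollary is essentially a bookkeeping consequence of the lemma. The only point requiring care is the first step, namely making precise that $\star_{(0)}$ and $\star_{(1)}$ have a common associated Poisson bracket so that their skew parts agree; but this is immediate from the isomorphism $\widetilde{\phi_1} = \widetilde{\phi_0}\circ R$ together with Lemma \ref{lemma: isom dq same poisson}.
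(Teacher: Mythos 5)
Your proposal is correct and matches the paper's intent: the paper states this corollary without proof, as an immediate consequence of Lemma \ref{lemma: change of std sec special}, and your argument is precisely the bookkeeping being left to the reader. Namely, since $\star_{(0)}$ and $\star_{(1)}$ are isomorphic via $R$ they share the associated Poisson bracket (Lemma \ref{lemma: isom dq same poisson}), so with $\phi_0$ special, specialness of $\phi_1$ is equivalent to $P^{(0)}_1 = P^{(1)}_1$, which the lemma identifies with $R_1$ being a derivation.
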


For $\xi\in\mathcal{T}_{X/\mathcal{P}} = \Omega^1_\Pi$ let $R_\xi := 1 + \xi t$. Corollary \ref{cor: change of std sec special} implies that for  $\phi \in  \Sigma(\mathbb{A})$ the section $\widetilde{\phi}\circ R_\xi\vert_{\mathcal{O}_{X/\mathcal{P}}}$ is special, i.e. $\phi \mapsto \widetilde{\phi}\circ R_\xi\vert_{\mathcal{O}_{X/\mathcal{P}}}$ gives rise to a map $\Sigma(\mathbb{A}) \to \Sigma(\mathbb{A})$.

Suppose that $\phi_0, \phi_1 \in \Sigma(\mathbb{A})$ are equivalent modulo $t^2$, which is to say $\phi_0(f) - \phi_1(f) \in t^2\mathbb{A}$ for any $f \in \mathcal{O}_{X/\mathcal{P}}$ which implies that $\widetilde{\phi_0}(f) - \widetilde{\phi_1}(f) \in t^2\mathbb{A}$ for any $f \in \mathcal{O}_{X/\mathcal{P}}[[t]]$. Then, for any $f \in \mathcal{O}_{X/\mathcal{P}}$, $\widetilde{\phi_0}\circ R_\xi\vert_{\mathcal{O}_{X/\mathcal{P}}}(f) - \widetilde{\phi_1}\circ R_\xi\vert_{\mathcal{O}_{X/\mathcal{P}}}(f) =  \widetilde{\phi_0}( R_\xi\vert_{\mathcal{O}_{X/\mathcal{P}}}(f)) - \widetilde{\phi_1}(R_\xi\vert_{\mathcal{O}_{X/\mathcal{P}}}(f)) \in t^2\mathbb{A}$.
Therefore, the map $\phi \mapsto \widetilde{\phi}\circ R_\xi\vert_{\mathcal{O}_{X/\mathcal{P}}}$ preserves equivalence modulo $t^2$, hence descends to a map  $\Sigma_1(\mathbb{A}) \to \Sigma_1(\mathbb{A})$ denoted $\phi \mapsto \phi + \xi$.

\begin{lemma}\label{lemma: sp std sec T-torsor}
In the notation introduced above
\begin{enumerate}
\item The map $\Sigma_1(\mathbb{A})\times\Omega^1_\Pi \to \Sigma_1(\mathbb{A})$ given by $(\phi,\xi) \mapsto \phi + \xi$ defines a free action of the group $\Omega^1_\Pi$ on the sheaf $\Sigma_1(\mathbb{A})$. Moreover, $\Sigma_1(\mathbb{A})$ is a torsor under (the above action of) $\Omega^1_\Pi$.

\item For a morphism of DQ-algebras $\Phi \colon \mathbb{A}_0 \to \mathbb{A}_1$ the induced map $\Sigma_1(\Phi) \colon \Sigma_1(\mathbb{A}_0) \to \Sigma_1(\mathbb{A}_1)$ is a morphism of $\Omega^1_\Pi$-torsors.

\item \label{lemma: sp std sec T-torsor.3} For $a \in \mathbb{A}^\times$ let $\Ad a$ denote the inner automorphism defined by $(\Ad a) (f)= a f a^{-1}$. Then, for $\phi \in \Sigma_1(\mathbb{A})$
\[
\Sigma_1(\Ad a)(\phi) = \phi+ d_\Pi\log(\sigma(a)).
\]
\end{enumerate}
\end{lemma}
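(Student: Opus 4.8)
The plan is to reduce all three assertions to explicit computations modulo $t^2$, the basic tool being the formula
\[
(\phi + \xi)(f) \equiv \phi(f) + t\cdot\phi(\xi(f)) \pmod{t^2\mathbb{A}},
\]
valid for $\phi \in \Sigma(\mathbb{A})$, $\xi \in \mathcal{T}_{X/\mathcal{P}} = \Omega^1_\Pi$ and $f\in\mathcal{O}_{X/\mathcal{P}}$, which follows at once from $R_\xi = 1 + \xi t$ and the $\mathbb{C}[[t]]$-linearity of $\widetilde{\phi}$, since $\widetilde{\phi}(R_\xi(f)) = \widetilde{\phi}(f) + t\widetilde{\phi}(\xi(f))$. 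Granting this, part (1) is largely formal: iterating the formula gives $((\phi+\xi)+\eta)(f) \equiv \phi(f) + t\phi((\xi+\eta)(f)) \pmod{t^2}$, the $t^2$-term dropping out, so the operation is an action of the additive group $\Omega^1_\Pi$ with $\phi + 0 = \phi$. Freeness is immediate: if $\phi + \xi \sim_1 \phi$ then $\phi(\xi(f)) \in t\mathbb{A}$ for all $f$, whence $\xi(f) = \sigma(\phi(\xi(f))) = 0$ and so $\xi = 0$. I take the descent of the action to $\Sigma_1(\mathbb{A})$ as already established in the text preceding the lemma.

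The substantive point in (1) is transitivity, and this is where the structural results on special standard sections enter. By Proposition \ref{prop: special std sec exist locally} the sheaf $\Sigma_1(\mathbb{A})$ is locally non-empty, so it suffices to show that any two local sections differ by the action. Given $\phi_0, \phi_1 \in \Sigma(\mathbb{A})$ over a common open set, Proposition \ref{prop: iso star-prod diff op} supplies a unique $R = 1 + \sum_{i\geq1} R_i t^i$ with $\widetilde{\phi_1} = \widetilde{\phi_0}\circ R$; since both sections are special, Corollary \ref{cor: change of std sec special} forces $R_1 =: \xi$ to be a derivation, i.e. a section of $\mathcal{T}_{X/\mathcal{P}} = \Omega^1_\Pi$. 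Reducing $\widetilde{\phi_1}(f) = \widetilde{\phi_0}(R(f))$ modulo $t^2$ then gives $\phi_1 \equiv \phi_0 + \xi$, so $\Sigma_1(\mathbb{A})$ is an $\Omega^1_\Pi$-torsor. I expect this to be the main obstacle, not because the argument is long but because it is the one place where one must appeal to the classification of special standard sections rather than merely manipulate the defining formula.

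For (2), note first that a morphism of DQ-algebras is a local isomorphism by Proposition \ref{prop: iso star-prod diff op}, so by Lemma \ref{lemma: isom dq same poisson} the algebras $\mathbb{A}_0$ and $\mathbb{A}_1$ share the associated Poisson bi-vector, hence the same $\Pi$, and the phrase ``morphism of $\Omega^1_\Pi$-torsors'' is meaningful. Equivariance is then formal: applying the $\mathbb{C}[[t]]$-linear $\Phi$ to the displayed formula gives $(\Phi\circ(\phi + \xi))(f) \equiv \Phi(\phi(f)) + t\Phi(\phi(\xi(f))) = ((\Phi\circ\phi) + \xi)(f) \pmod{t^2}$, which is precisely $\Sigma_1(\Phi)(\phi + \xi) = \Sigma_1(\Phi)(\phi) + \xi$.

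Finally, for (3) I would compute $\Ad(a)\circ\phi$ modulo $t^2$ directly. Writing $a_0 := \sigma(a) \in \mathcal{O}_{X/\mathcal{P}}^\times$ and using the lift $\phi(a_0)$, one has $a \equiv \phi(a_0) \pmod{t\mathbb{A}}$ (as $\sigma(a - \phi(a_0)) = 0$), so $a\phi(f)a^{-1} = \phi(f) + [a,\phi(f)]a^{-1}$ with $[a,\phi(f)] \equiv [\phi(a_0),\phi(f)] \pmod{t^2}$, since the correction $a - \phi(a_0) \in t\mathbb{A}$ contributes a commutator already lying in $t\mathbb{A}$. By the very definition of the associated Poisson bracket, $[\phi(a_0),\phi(f)] \equiv t\phi(\{a_0,f\}) \pmod{t^2}$, and replacing $a^{-1}$ by the lift of $a_0^{-1}$ (permissible as the factor already lies in $t\mathbb{A}$) yields $a\phi(f)a^{-1} \equiv \phi(f) + t\phi(a_0^{-1}\{a_0,f\}) \pmod{t^2}$. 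It then remains to identify the vector field $f \mapsto a_0^{-1}\{a_0,f\}$ with $d_\Pi\log(a_0)$ acting on $f$: since $d_\Pi g$ is the Hamiltonian vector field of $g$ and $d_\Pi\log a_0 = a_0^{-1}d_\Pi a_0$, this is a direct unwinding of the definitions of $\widetilde{\poisson}$, $\dual{\sigma}$ and $d_\Pi\log$. Comparing with the displayed formula for the action gives $\Sigma_1(\Ad a)(\phi) = \phi + d_\Pi\log(\sigma(a))$; the one point to watch is the sign, which is fixed by the conventions of Sections \ref{subsubsection: Poisson structures} and \ref{subsubsection: the de Rham complex}.
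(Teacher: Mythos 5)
Your proof is correct, and for parts (1) and (2) it follows essentially the paper's argument: local non-emptiness from Proposition \ref{prop: special std sec exist locally}, the group-action identity via $R_\xi\circ R_\eta = R_{\xi+\eta}+t^2\,\xi\circ\eta$ (which is exactly your ``iterated formula'' computation), freeness from $\sigma\circ\phi=\id$, transitivity from Proposition \ref{prop: iso star-prod diff op} combined with Corollary \ref{cor: change of std sec special} (the paper cites the corollary in one line; you spell out the reduction modulo $t^2$, which is the intended argument), and equivariance in (2) by associativity of composition applied to the $\mathbb{C}[[t]]$-linear $\Phi$. The only genuine divergence is in part (3): the paper first localizes to write $a=\exp\alpha$ with $\alpha\in\mathbb{A}$ and truncates the series $\Ad(\exp\alpha)(b)=\sum_i(\ad\alpha)^i(b)/i!$, whose terms beyond the first lie in $t^2\mathbb{A}$; you instead expand $a\phi(f)a^{-1}$ directly, replacing $a$ and $a^{-1}$ by the lifts $\phi(\sigma(a))$ and $\phi(\sigma(a)^{-1})$ at the cost of errors in $t\mathbb{A}$, which are harmless since all commutators already lie in $t\mathbb{A}$. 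Your route buys a small simplification --- it avoids having to justify that an invertible section is locally an exponential --- while the paper's exponential trick makes the appearance of the Hamiltonian derivation $\{\sigma(\alpha),\cdot\,\}=d_\Pi\log(\sigma(a))$ immediate; both computations land on the same formula and rest on the same sign convention for $d_\Pi$, which you rightly flag as the one point to watch.
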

\begin{proof}
{~}
\begin{enumerate}
\item
The sheaf $\Sigma_1(\mathbb{A})$ is locally non-empty by Proposition \ref{prop: special std sec exist locally}.

Note that, for $\xi, \eta \in \mathcal{T}_{X/\mathcal{P}}$ the equality $R_\xi \circ R_\eta = R_{\xi + \eta} + t^2\xi\circ\eta$ holds. Therefore, the formula $(\phi,\xi) \mapsto \phi + \xi$ does indeed define an action of the group $\Omega^1_\Pi$. Since $\widetilde{\phi}(R_\xi(f)) - \widetilde{\phi}(f) = t\phi(\xi(f)) + t^2\mathbb{A}$, it follows that the action is free. Corollary \ref{cor: change of std sec special} implies that the action is in fact transitive.

\item
By associativity of composition
\[
\Phi(\widetilde{\phi}\circ R_\xi(f)) = \widetilde{(\Phi\circ\phi)}\circ R_\xi(f)
\]
for all $\phi \in \Sigma(\mathbb{A}_0)$, $\xi \in \mathcal{T}_{X/\mathcal{P}}$ and $f \in \mathcal{O}_{X/\mathcal{P}}$. Therefore,
\[
\Sigma_1(\phi + \xi) = \Sigma_1(\phi) + \xi .
\]

\item Since the statement is local, we can assume that $a = \exp \alpha$, $\alpha \in \mathbb{A}$. Note that, for $b \in \mathbb{A}$,
\[
\Ad(\exp \alpha) (b) = \sum_{i=0}^{\infty} \frac{(\ad \alpha)^i(b)}{i!}=  b +t\{\sigma(\alpha), \sigma(b)\}+t^2 \mathbb{A},
\]
where $(\ad \alpha)(b):= [\alpha, b]$. Therefore, for $f \in \mathcal{O}_{X/\mathcal{P}}$, since $\sigma\circ\phi = \id$,
\begin{multline*}
((\Ad a) \circ \phi)(f) = \phi(f) + t\{\sigma(\alpha), \sigma(\phi(f))\}+t^2 \mathbb{A} = \\
\phi(f) + t d_\Pi\log(\sigma(a))(f) + t^2 \mathbb{A} = \widetilde{\phi}\circ R_{d_\Pi\log(\sigma(a))}(f) + t^2 \mathbb{A}
\end{multline*}
\end{enumerate}
\end{proof}

In view of Lemma \ref{lemma: sp std sec T-torsor}, the assignment $\mathbb{A} \mapsto \Sigma_1(\mathbb{A})$, $\Phi \mapsto \Sigma_1(\Phi)$, defines a morphism of stacks
\[
\Sigma_1 \colon \DQA_{X/\mathcal{P}} \to \Omega^1_\Pi[1] .
\]

\subsection{Subprincipal symbols}\label{subsection: Subprincipal symbols}
The construction presented below can be traced back to \cite{V75}. Subprincipal curvature defined below appears in the context of classification of star-products in \cite{BCG97} and in \cite{B02}, where it is called "semiclassical curvature".

Multiplication by $t^n$ induces the isomorphism $\mathcal{O}_{X/\mathcal{P}} \xrightarrow{\cong} t^n\mathbb{A}/t^{n+1}\mathbb{A}$. In particular, there is a short exact sequence
\begin{equation}\label{subprincipal ext}
0 \to \mathcal{O}_{X/\mathcal{P}} \xrightarrow{\cdot t} \mathbb{A}/t^2\mathbb{A} \to \mathcal{O}_{X/\mathcal{P}} \to 0
\end{equation}
Let $\widetilde{\{\ ,\ \}}$ denote operation on $\mathbb{A}/t^2\mathbb{A}$ induced by $\dfrac1t[\ ,\ ]$. The operation $\widetilde{\{\ ,\ \}}$ endows $\mathbb{A}/t^2\mathbb{A}$ with a structure of a Lie algebra so that the exact sequence \eqref{subprincipal ext} exhibits $\mathbb{A}/t^2\mathbb{A}$ as an abelian extension of (the Lie algebra) $\mathcal{O}_{X/\mathcal{P}}$ equipped with the associated Poisson bracket.

\begin{lemma}\label{lemma: curvature divisible by t}
For $\phi \in \Sigma(\mathbb{A})$ and $f,g \in \mathcal{O}_{X/\mathcal{P}}$
\[
  \widetilde{\{\phi(f) ,\phi(g) \}} - \phi(\{f,g\}) \in t\mathbb{A}.
\]
\end{lemma}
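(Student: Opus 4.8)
The plan is to reduce the claim to the defining property of the associated Poisson bracket by applying the reduction map $\sigma \colon \mathbb{A} \to \mathcal{O}_{X/\mathcal{P}}$. Concretely, I read $\widetilde{\{\phi(f),\phi(g)\}}$ as the class in $\mathbb{A}/t^2\mathbb{A}$ of the element $t^{-1}[\phi(f),\phi(g)] \in \mathbb{A}$, which makes sense because the commutator takes values in $t\mathbb{A}$. Thus it suffices to show that $t^{-1}[\phi(f),\phi(g)] - \phi(\{f,g\})$ lies in $t\mathbb{A}$, and since $t\mathbb{A} = \ker\sigma$, it is enough to check that the two summands have equal image under $\sigma$.

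First I would record that $\phi$ is a section of $\sigma$, so that $\sigma(\phi(f)) = f$ and $\sigma(\phi(g)) = g$; in particular $\sigma(\phi(\{f,g\})) = \{f,g\}$. Next I would invoke the construction of the associated Poisson bracket: by definition the composition sending $a \otimes b$ to $\sigma(t^{-1}[a,b])$ factors through $\sigma\otimes\sigma$ as $\{\sigma(a),\sigma(b)\}$. Applying this with $a = \phi(f)$ and $b = \phi(g)$ yields $\sigma(t^{-1}[\phi(f),\phi(g)]) = \{f,g\}$. Subtracting the two computations gives $\sigma\bigl(t^{-1}[\phi(f),\phi(g)] - \phi(\{f,g\})\bigr) = \{f,g\} - \{f,g\} = 0$, whence the difference lies in $\ker\sigma = t\mathbb{A}$, as required.

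This argument is an unwinding of definitions rather than a genuine computation, so no step is a real obstacle; the care goes entirely into bookkeeping. The one point I would verify explicitly is that $\widetilde{\{\ ,\ \}}$ is well defined on $\mathbb{A}/t^2\mathbb{A}$: replacing the lift $\phi(f)$ by $\phi(f) + t^2 c$ changes $[\phi(f),\phi(g)]$ by $t^2[c,\phi(g)] \in t^3\mathbb{A}$, so $t^{-1}$ of the commutator is unchanged modulo $t^2$. I would also remark that the hypothesis that $\phi$ be \emph{special} (or even standard) plays no role here: the proof uses only that $\phi$ is a $\mathbb{C}$-linear section of $\sigma$, which is precisely what makes the defining diagram of the associated Poisson bracket applicable to $a = \phi(f)$ and $b = \phi(g)$.
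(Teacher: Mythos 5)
Your proof is correct, but it takes a genuinely different route from the paper's. The paper argues computationally from standardness: writing $\phi(f)\phi(g) = \phi(fg) + \sum_{i\geq 1}\phi(P_i(f,g))t^i$ and using $P_1(f,g)-P_1(g,f)=\{f,g\}$, it identifies the difference explicitly as $t\,\phi\bigl(P_2(f,g)-P_2(g,f)\bigr)$ modulo $t^2\mathbb{A}$, which is visibly in $t\mathbb{A}$. You instead apply the reduction $\sigma$ and invoke the defining factorization of the associated Poisson bracket, $\sigma\bigl(t^{-1}[a,b]\bigr) = \{\sigma(a),\sigma(b)\}$, with $a=\phi(f)$, $b=\phi(g)$; since $\ker\sigma = t\mathbb{A}$, the membership follows at once. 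Your remark that standardness and specialness play no role is accurate: your argument shows the lemma holds for any $\mathbb{C}$-linear section of $\sigma$, which is strictly more general than what the paper proves, and your explicit check that $\widetilde{\{\ ,\ \}}$ is well defined on $\mathbb{A}/t^2\mathbb{A}$ is a point the paper leaves implicit in the definition. What the paper's computation buys in exchange is an explicit formula for the defect --- it shows $c(\phi)(f,g) = P_2(f,g)-P_2(g,f)$, tying the subprincipal curvature to the second-order coefficient of the star-product $\star_\phi$ --- whereas your argument establishes only the membership (which is all the lemma asserts, and all that is needed to define $c(\phi)$).
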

\begin{proof}
 Since (in the notations of \eqref{P})
\[
[\phi(f), \phi(g)] -( t \phi(\{f, g\})+ t^2\phi(P_2(f,g) -P_2(g, f))) \in t^3 \mathbb{A} ,
\]
it follows that
\[
\widetilde{\{\phi(f) ,\phi(g) \}} - \phi(\{f,g\})= t \phi(P_2(f,g) -P_2(g, f)) +t^2\mathbb{A} \in t\mathbb{A}
\]
\end{proof}

By Lemma \ref{lemma: curvature divisible by t}
\[
\widetilde{\{\phi(f) ,\phi(g) \}} - \phi(\{f,g\}) + t^2\mathbb{A} \in t\mathbb{A}/\mathbb{A}/t^2\mathbb{A} \xleftarrow{\cdot t}\mathcal{O}_{X/\mathcal{P}} ,
\]
For $\phi \in \Sigma(\mathbb{A})$ we define the map
\begin{equation}\label{subprincipal curvature}
c(\phi) \colon \mathcal{O}_{X/\mathcal{P}}\otimes_\mathbb{C}\mathcal{O}_{X/\mathcal{P}} \to \mathcal{O}_{X/\mathcal{P}}
\end{equation}
by
\[
tc(\phi)(f,g) =  \widetilde{\{\phi(f) ,\phi(g) \}} - \phi(\{f,g\}) + t^2 \mathbb{A} \in t\mathbb{A}/t^2\mathbb{A} .
\]

\begin{lemma}\label{lemma: subprincipal curvature natural}
Let  $\Phi \colon \mathbb{A}_0 \to \mathbb{A}_1$ be   a morphism of DQ-algebras. Then
\[
c(\Phi \circ \phi) = c(\phi)
\]
\end{lemma}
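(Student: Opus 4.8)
The plan is to recognize that the subprincipal curvature $c(\phi)$ is computed entirely inside the abelian Lie-algebra extension \eqref{subprincipal ext} attached to $\mathbb{A}$, and that a morphism $\Phi$ of DQ-algebras descends to a morphism of these extensions which is the identity on the relevant copies of $\mathcal{O}_{X/\mathcal{P}}$. Once this is set up, naturality should fall out with no computation involving the bi-differential operators $P_i$.

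First I would note that, since $\Phi$ is a morphism of sheaves of $\mathbb{C}[[t]]$-algebras, it carries $t^2\mathbb{A}_0$ into $t^2\mathbb{A}_1$ and hence induces a $\mathbb{C}$-linear map $\overline{\Phi}\colon \mathbb{A}_0/t^2\mathbb{A}_0 \to \mathbb{A}_1/t^2\mathbb{A}_1$. Because $\Phi$ is $\mathbb{C}[[t]]$-linear and induces the identity modulo $t$, the map $\overline{\Phi}$ fits into a morphism of the short exact sequences \eqref{subprincipal ext} for $\mathbb{A}_0$ and $\mathbb{A}_1$, restricting to the identity both on the sub-object $t\mathbb{A}_i/t^2\mathbb{A}_i \cong \mathcal{O}_{X/\mathcal{P}}$ and on the quotient $\mathcal{O}_{X/\mathcal{P}}$. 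Moreover, since $\Phi$ commutes with commutators and with multiplication by $t$, it intertwines the operations $\frac1t[\ ,\ ]$; hence $\overline{\Phi}$ is compatible with the Lie brackets, namely $\overline{\Phi}(\widetilde{\{x,y\}}) = \widetilde{\{\overline{\Phi}(x), \overline{\Phi}(y)\}}$ for $x,y\in\mathbb{A}_0/t^2\mathbb{A}_0$.

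Next I would rewrite the defining formula \eqref{subprincipal curvature} for $c(\Phi\circ\phi)$, which is legitimate since $\Phi\circ\phi\in\Sigma(\mathbb{A}_1)$ by Proposition \ref{prop: special std sec exist locally}. Working in $\mathbb{A}_1/t^2\mathbb{A}_1$ and using $(\Phi\circ\phi)(h) \equiv \overline{\Phi}(\phi(h))$ modulo $t^2$ together with the two compatibilities above, one gets
\[
\widetilde{\{(\Phi\circ\phi)(f), (\Phi\circ\phi)(g)\}} - (\Phi\circ\phi)(\{f,g\}) = \overline{\Phi}\left(\widetilde{\{\phi(f), \phi(g)\}} - \phi(\{f,g\})\right).
\]
By Lemma \ref{lemma: curvature divisible by t} the argument of $\overline{\Phi}$ equals $t\,c(\phi)(f,g)$, which lies in the sub-object $t\mathbb{A}_0/t^2\mathbb{A}_0$; as $\overline{\Phi}$ restricts to the identity there, the right-hand side is $t\,c(\phi)(f,g)\in t\mathbb{A}_1/t^2\mathbb{A}_1$. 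Comparing with the definition $t\,c(\Phi\circ\phi)(f,g) = \widetilde{\{(\Phi\circ\phi)(f),(\Phi\circ\phi)(g)\}} - (\Phi\circ\phi)(\{f,g\}) + t^2\mathbb{A}_1$ then yields $c(\Phi\circ\phi) = c(\phi)$.

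I expect the only point requiring genuine care to be the verification that $\overline{\Phi}$ respects all three structures simultaneously — the exact sequence \eqref{subprincipal ext}, the brackets $\widetilde{\{\ ,\ \}}$, and the identification of the sub-object with $\mathcal{O}_{X/\mathcal{P}}$ — so that the final comparison really collapses to the identity on $\mathcal{O}_{X/\mathcal{P}}$. Each of these, however, is a direct formal consequence of $\Phi$ being a unital $\mathbb{C}[[t]]$-algebra morphism inducing the identity modulo $t$, so there is no real obstacle beyond bookkeeping.
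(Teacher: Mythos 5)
Your proof is correct and takes essentially the same route as the paper's: both arguments pull $\Phi$ out of the defining expression, writing $t\,c(\Phi\circ\phi)(f,g) = \Phi\left(t\,c(\phi)(f,g)\right)$ via compatibility of $\Phi$ with the brackets $\widetilde{\{\ ,\ \}}$, and then conclude because $\Phi$ restricts to the identity on $t\mathbb{A}_0/t^2\mathbb{A}_0 \cong \mathcal{O}_{X/\mathcal{P}}$ (the paper's commutative diagram). Your extra bookkeeping — packaging this as a morphism of the extensions \eqref{subprincipal ext} that is the identity on sub-object and quotient — is simply a more explicit rendering of the same two facts.
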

\begin{proof}
We have
\begin{multline*}
tc(\Phi \circ \phi)(f,g) = \widetilde{\{\Phi \circ \phi(f) ,\Phi \circ \phi(g) \}} - \Phi \circ \phi(\{f,g\}) + t^2 \mathbb{A}_1 = \\
\Phi \left(\widetilde{\{\phi(f) ,\phi(g) \}} - \phi(\{f,g\}) + t^2 \mathbb{A}_0\right) = \Phi(tc(\phi)(f,g).
\end{multline*}
The statement follows from the commutativity of the diagram
\[
\begin{CD}
\mathcal{O}_{X/\mathcal{P}} @>{\cdot t}>> t\mathbb{A}_0/t^2\mathbb{A}_0 \\
@| @VV{\Phi}V \\
\mathcal{O}_{X/\mathcal{P}} @>{\cdot t}>> t\mathbb{A}_1/t^2\mathbb{A}_1
\end{CD}
\]
\end{proof}

\begin{prop}\label{prop: subprincipal curvature}
{~}
\begin{enumerate}
\item \label{subprincipal curvature 2}The map \eqref{subprincipal curvature} is a skew-symmetric bi-derivation, hence $c(\phi) \in  \bigwedge^2\mathcal{T}_{X/\mathcal{P}}$.

\item \label{subprincipal curvature 3}The bi-vector $c(\phi)$ satisfies $d_\Pi c(\phi) = 0$.

\item \label{subprincipal curvature 5}The map $c(\phi)$ depends only on the equivalence class of $\phi$ modulo $t^2$.

\item \label{subprincipal curvature 4}For $\xi\in\mathcal{T}_{X/\mathcal{P}}$ 
\[
c(\phi + \xi) = c(\phi) + d_\Pi \xi .
\]
\end{enumerate}
\end{prop}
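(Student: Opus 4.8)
The plan is to first make $c(\phi)$ completely explicit and then treat the four assertions in turn; (1) and (2) together say that $c(\phi)$ is a closed Poisson bivector and are the substantive points, while (3) and (4) follow by direct expansion. The starting point is the computation in the proof of Lemma \ref{lemma: curvature divisible by t}: choosing a representative so that $\star_\phi$ is the special star-product $f\star_\phi g = fg + \tfrac12\{f,g\}t + P_2(f,g)t^2 + \cdots$ attached to $\phi$, one has $[\phi(f),\phi(g)] = t\phi(\{f,g\}) + t^2\phi(P_2(f,g)-P_2(g,f)) + t^3\mathbb{A}$, so that under $\mathcal{O}_{X/\mathcal{P}}\cong t\mathbb{A}/t^2\mathbb{A}$ we obtain $c(\phi)(f,g) = P_2(f,g)-P_2(g,f)$, the full antisymmetrization of $P_2$.

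For (1), skew-symmetry is immediate from this formula. For the biderivation property I would use that associativity of $\star_\phi$ at order $t^2$ reads $\delta P_2 = -\tfrac12[P_1,P_1]$, where $\delta$ is the Hochschild differential and $[\ ,\ ]$ the Gerstenhaber bracket; since $P_1 = \tfrac12\pi$ is a bivector and the associated bracket of a DQ-algebra is Poisson, $[P_1,P_1]$ is a multiple of the Schouten bracket $[\pi,\pi]$ and hence vanishes, so $P_2$ is a genuine Hochschild $2$-cocycle. By the Hochschild--Kostant--Rosenberg theorem (applied locally, as in Lemma \ref{lemma: iso to special}) $P_2 = \beta + \delta D$ with $\beta$ a bivector and $D$ a differential operator; as the coboundary $\delta D$ of a $1$-cochain is symmetric, antisymmetrization kills it and $c(\phi) = 2\beta$ is a biderivation, i.e. $c(\phi)\in\bigwedge^2\mathcal{T}_{X/\mathcal{P}}$. (Alternatively one expands $\tfrac1t[\phi(fg),\phi(h)]$ by the Leibniz rule for the commutator and the standard-section identity \eqref{P}; the failure of the Leibniz rule is a sum of double brackets that cancels by the Jacobi identity for $\{\ ,\ \}$.)

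For (2), the cleanest route is to recognize $c(\phi)$ as the cocycle of an abelian Lie-algebra extension. The exact sequence \eqref{subprincipal ext} exhibits $\mathbb{A}/t^2\mathbb{A}$, with bracket $\widetilde{\{\ ,\ \}}$, as an extension of $(\mathcal{O}_{X/\mathcal{P}},\{\ ,\ \})$ by the abelian ideal $t\mathbb{A}/t^2\mathbb{A}\cong\mathcal{O}_{X/\mathcal{P}}$ on which the quotient acts by the adjoint action $f\cdot g = \{f,g\}$, and $\phi$ (mod $t^2$) is a linear splitting whose curvature is precisely $c(\phi)$. The Jacobi identity in $\mathbb{A}/t^2\mathbb{A}$ then says exactly that $c(\phi)$ is a Chevalley--Eilenberg $2$-cocycle with coefficients in this adjoint module. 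Under the identification of $\bigwedge^\bullet\mathcal{T}_{X/\mathcal{P}}$ with alternating multiderivations the Chevalley--Eilenberg differential agrees with the Lichnerowicz differential $[\pi,\cdot]$, which is $d_\Pi$ (see \ref{subsubsection: the de Rham complex}); hence the cocycle condition is precisely $d_\Pi c(\phi)=0$.

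Parts (3) and (4) are direct verifications. For (3), every operation defining $c(\phi)$ is computed modulo $t^2$, and if $\phi\sim_1\phi'$ then $\phi(h)-\phi'(h)\in t^2\mathbb{A}$, whence $[\phi(f),\phi(g)]-[\phi'(f),\phi'(g)]\in t^3\mathbb{A}$ because $[t^2\mathbb{A},\mathbb{A}]\subseteq t^3\mathbb{A}$; thus $c(\phi)=c(\phi')$. For (4), using $\phi'(f):=\widetilde{\phi}\circ R_\xi\vert_{\mathcal{O}_{X/\mathcal{P}}}(f) = \phi(f)+t\phi(\xi f)$, I would expand $\tfrac1t[\phi'(f),\phi'(g)]-\phi'(\{f,g\})$ modulo $t^2$; the terms linear in $\xi$ assemble into $\{\xi f,g\}+\{f,\xi g\}-\xi\{f,g\} = [\pi,\xi](f,g) = d_\Pi\xi(f,g)$, giving $c(\phi+\xi)=c(\phi)+d_\Pi\xi$. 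The main obstacle is (2): although the cocycle bookkeeping is formal, one must correctly identify the adjoint action on the ideal $t\mathbb{A}/t^2\mathbb{A}$ and match the Chevalley--Eilenberg differential of $(\mathcal{O}_{X/\mathcal{P}},\{\ ,\ \})$ with the Lichnerowicz differential $d_\Pi$ on multivectors, with the correct signs; this identification is what converts the Jacobi identity in $\mathbb{A}/t^2\mathbb{A}$ into the desired closedness.
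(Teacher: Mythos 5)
Your explicit formula $c(\phi)(f,g) = P_2(f,g) - P_2(g,f)$ is correct, and parts (3) and (4) of your argument are fine --- (3) is in fact more direct than the paper's, which deduces it from a change-of-section formula, and (4) agrees with the paper's computation. Your extension-theoretic argument for (2) is also a legitimate conceptual repackaging of what the paper does by hand (the paper extracts the $t^3$-coefficient of the Jacobi identity for commutators, which is exactly the Chevalley--Eilenberg cocycle condition for the splitting of $\mathbb{A}/t^2\mathbb{A}$), granting the standard identification of the Chevalley--Eilenberg differential on multiderivations with $d_\Pi$. The genuine gap is in your main route to (1): $P_2$ is \emph{not} a Hochschild cocycle. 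Associativity of $\star_\phi$ at order $t^2$ gives, with $\delta$ the Hochschild differential and $P_1 = \tfrac12\{\ ,\ \}$,
\[
(\delta P_2)(f,g,h) \;=\; P_1(P_1(f,g),h) - P_1(f,P_1(g,h)) \;=\; \tfrac14\bigl(\{\{f,g\},h\} - \{f,\{g,h\}\}\bigr) \;=\; -\tfrac14\{g,\{f,h\}\},
\]
using the Jacobi identity, and this is generically nonzero (e.g.\ on $\mathbb{R}^2$ with the standard bracket, $f=x^2$, $g=h=y$ gives $\tfrac12$). Your claim that $[P_1,P_1]$ ``is a multiple of the Schouten bracket $[\pi,\pi]$ and hence vanishes'' conflates the Gerstenhaber bracket with its full antisymmetrization: only the latter is the Schouten bracket. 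For a Poisson bivector, $[P_1,P_1]$ is a Hochschild \emph{coboundary}, not zero, so HKR cannot be applied to $P_2$ as you propose.

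The gap is repairable, and the repair shows where the Poisson property actually enters. For any $2$-cochain $C$ one has $\delta(C^{\op})(f,g,h) = -(\delta C)(h,g,f)$, where $C^{\op}(f,g) := C(g,f)$; hence
\[
\delta\bigl(P_2 - P_2^{\op}\bigr)(f,g,h) \;=\; (\delta P_2)(f,g,h) + (\delta P_2)(h,g,f) \;=\; -\tfrac14\{g,\{f,h\}\} - \tfrac14\{g,\{h,f\}\} \;=\; 0 .
\]
Thus it is the \emph{antisymmetrization} $c(\phi) = P_2 - P_2^{\op}$ that is a Hochschild cocycle --- the cancellation by Jacobi here is the same one that powers the paper's proof --- and only then does your HKR step go through: a skew-symmetric cocycle is, locally, a bivector plus $\delta D$ for a differential operator $D$, and $\delta D$ is symmetric, so the skew cocycle is a bivector. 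Note also that (2) needs (1): the cocycle identity is an identity of trilinear maps, and reading it as $d_\Pi c(\phi) = 0$ presupposes $c(\phi)\in\bigwedge^2\mathcal{T}_{X/\mathcal{P}}$. Your parenthetical alternative for (1) --- expanding the commutator Leibniz identity $[\phi(f),\phi(g)\phi(h)] = \phi(g)[\phi(f),\phi(h)] + [\phi(f),\phi(g)]\phi(h)$ via \eqref{P} and cancelling the double brackets by Jacobi --- is precisely the paper's proof; had you carried that out instead of the HKR route, the argument would have been complete.
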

\begin{proof}
\begin{enumerate}
\item The skew-symmetry of $c(\phi)$ is clear. Applying $(\widetilde{\phi})^{-1}$ to the identity
\[
[\phi(f), \phi(g) \phi(h)] = \phi(g)[\phi(f),  \phi(h)]+ [\phi(f), \phi(g)] \phi(h) ,
\]
expanding the result in powers of $t$ and comparing the coefficients of $t^2$ we obtain
\[
c(\phi)(f, gh) + \frac{1}{2}\{f, \{g, h\}\} = g c(\phi)(f, h)+\frac{1}{2}\{g, \{f, h\}\} + c(\phi)(f, g)h +\frac{1}{2} \{\{f,g\}, h\},
\]
which implies that
\[
c(\phi)(f, gh)   = g c(\phi)(f, h)  + c(\phi)(f, g)h
\]
i.e. $c(\phi)$ is a derivation in the second variable. Skew-symmetry implies that it is a biderivation.

\item 
Applying $(\widetilde{\phi})^{-1}$ to the identity
\[
[\phi(f), [\phi(g), \phi(h)]] = [\phi(g),[\phi(f),  \phi(h)]]+ [[\phi(f), \phi(g)], \phi(h)] ,
\]
expanding the result in powers of $t$ and comparing the coefficients of $t^3$ we obtain
\[
c(\phi)(f, \{g, h\})  + \{f, c(\phi)(g, h)\} = c(\phi)(g, \{f, h\})  + \{g, c(\phi)(f, h)\}+ c(\phi)(\{f, g\}, h)  + \{ c(\phi)(f, g), h\}
\]
which is equivalent to $d_\Pi(\phi) = 0$.

\item If $\phi, \phi' \in \Sigma(\mathbb{A})$, then there exists $R = 1 +  \sum_{i=1}^\infty R_i t^i$, where $R_i$ are differential operators and $R_1 = \xi$ is a vector field such that $\phi' = \phi \circ R$. Direct calculation shows that
\begin{multline}\label{change of section}
[\phi'(f), \phi'(g)] = \phi (\{f,g\}t +
( \{\xi(f),g\} + \{f,\xi(g)\}  + c(\phi)(f,g))t^2 + \dots ).
\end{multline}
Hence,
\begin{multline*}
\frac{1}{t}[\phi'(f), \phi'(g)] - \phi'(\{f,g\}) \\=  \phi (\{f,g\}) +
\phi( \{\xi(f),g\} + \{f,\xi(g)\}  + c(\phi)(f,g))t - \phi'(\{f,g\})
\\ = \phi( \{\xi(f),g\} + \{f,\xi(g)\} - \xi(\{f, g\})  + c(\phi)(f,g))t + t^2\mathbb{A}
\end{multline*}
and
\begin{equation}\label{change of section subprin curv}
c(\phi')(f,g)= c(\phi)(f,g) + \{\xi(f),g\} + \{f,\xi(g)\} - \xi(\{f, g\}) .
\end{equation}
The sections $\phi$ and $\phi'$ are equivalent modulo $t^2$ if and only if $\xi = 0$ in which case \eqref{change of section subprin curv} implies $c(\phi') = c(\phi)$.

\item Note that \eqref{change of section subprin curv} is equivalent to $c(\phi')(f,g) = c(\phi)(f,g) + d_\Pi\xi(f,g)$.
\end{enumerate}
\end{proof}

According to parts (\ref{subprincipal curvature 2}), (\ref{subprincipal curvature 3}) and (\ref{subprincipal curvature 5}) of Proposition \ref{prop: subprincipal curvature} the assignment $c \colon \phi \mapsto c(\phi)$ descends to a map
\begin{equation}\label{subprincipal curvature map}
c \colon \Sigma_1(\mathbb{A}) \to \Omega^{2,cl}_\Pi .
\end{equation}
which, according to (\ref{subprincipal curvature 4}) of Proposition \ref{prop: subprincipal curvature} is a morphism of $\Omega^1_\Pi$-torsors compatible with the map of sheaves of groups $d_\Pi \colon \Omega^1_\Pi \to \Omega^{2,cl}_\Pi$. In other words, the map \eqref{subprincipal curvature map} endows $ \Sigma_1(\mathbb{A})$ with a canonical structure of a $(\Omega^1_\Pi \xrightarrow{d_\Pi} \Omega^{2,cl}_\Pi)$-torsor.

\begin{definition}
We refer to \eqref{subprincipal curvature map} as the \emph{subprincipal curvature} map.
\end{definition}

Lemma \ref{lemma: subprincipal curvature natural} and Lemma \ref{lemma: subprincipal curvature natural} imply that the assignment $\mathbb{A} \mapsto \Sigma_1(\mathbb{A})$, $\Phi \mapsto \Sigma_1(\Phi)$ defines a morphism of stacks
\[
\Sigma_1 \colon \DQA_{X/\mathcal{P}} \to (\Omega^1_\Pi \xrightarrow{d_\Pi} \Omega^{2,cl}_\Pi)[1] .
\]

\subsection{Bi-invertible bi-modules}\label{subsection: Bi-modules}
Suppose that $\mathbb{A}_0$ and $\mathbb{A}_1$ are DQ-algebras.

\begin{definition}[\cite{KS}, Definition 2.1.6]
$\mathbb{A}_1\otimes_{\mathbb{C}[[t]]}\mathbb{A}_0^\op$-module $\mathbb{A}_{01}$ is called \emph{bi-invertible} if locally on $X$ there exists a section $b\in\mathbb{A}_{01}$ such that the map $\mathbb{A}_1 \ni a \mapsto (a\otimes 1)b \in \mathbb{A}_{01}$ (respectively, $\mathbb{A}_0 \ni a \mapsto (1\otimes a)b \in \mathbb{A}_{01}$) is an isomorphism of $\mathbb{A}_1$- (respectively, $\mathbb{A}_0^\op$-) modules.
\end{definition}

\begin{lemma}\label{lemma: bi-inv same poisson}
Suppose that there exists a bi-invertible $\mathbb{A}_1\otimes_{\mathbb{C}[[t]]}\mathbb{A}_0^\op$-module.
Then, the associated Poisson bi-vectors of $\mathbb{A}_0$ and $\mathbb{A}_1$ are equal.
\end{lemma}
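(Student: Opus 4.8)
The plan is to reduce the statement to a local computation in which a local generator $b$ of the bi-invertible bimodule $\mathbb{A}_{01}$ is used to transport one algebra structure onto the other. Since bi-invertibility is a local condition and, by Lemma \ref{lemma: isom dq same poisson}, the associated Poisson bi-vector of a DQ-algebra is determined locally, it suffices to work on a sufficiently small open set $U$ on which there exists a section $b\in\mathbb{A}_{01}(U)$ with the property that both $a\mapsto(a\otimes 1)b$ and $a\mapsto(1\otimes a)b$ are isomorphisms of one-sided modules. First I would record the key consequence of this: for every $a\in\mathbb{A}_1$ there is a unique $\Phi(a)\in\mathbb{A}_0$ with $(a\otimes 1)b = (1\otimes\Phi(a))b$, because the left $\mathbb{A}_1$-action and the right $\mathbb{A}_0^{\op}$-action on $\mathbb{A}_{01}$ both identify it with a free rank-one module over the respective algebra via $b$.

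The central step is to check that $\Phi\colon\mathbb{A}_1\to\mathbb{A}_0$ is a morphism of $\mathbb{C}[[t]]$-algebras. Compatibility with the $\mathbb{C}[[t]]$-module structure and with units is immediate from uniqueness of $\Phi(a)$. For multiplicativity, given $a, a'\in\mathbb{A}_1$ I would compute $(aa'\otimes 1)b$ in two ways: on one hand it equals $(1\otimes\Phi(aa'))b$ by definition; on the other hand, writing $(aa'\otimes 1)b = (a\otimes 1)\bigl((a'\otimes 1)b\bigr) = (a\otimes 1)(1\otimes\Phi(a'))b = (1\otimes\Phi(a'))(a\otimes 1)b = (1\otimes\Phi(a'))(1\otimes\Phi(a))b = (1\otimes\Phi(a)\Phi(a'))b$, where I have used that the left $\mathbb{A}_1$- and right $\mathbb{A}_0^{\op}$-actions commute (the module is an $\mathbb{A}_1\otimes_{\mathbb{C}[[t]]}\mathbb{A}_0^{\op}$-module) and that $\mathbb{A}_0^{\op}$-multiplication composes in the reversed order, which matches the order $\Phi(a)\Phi(a')$ in $\mathbb{A}_0$. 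Uniqueness then forces $\Phi(aa') = \Phi(a)\Phi(a')$, so $\Phi$ is an algebra morphism. Finally, reduction modulo $t$ shows $\Phi$ covers the identity on $\mathcal{O}_{X/\mathcal{P}}$ (both $b$ and its reduction generate the reduction $\mathbb{A}_{01}/t\mathbb{A}_{01}\cong\mathcal{O}_{X/\mathcal{P}}$ as a free rank-one module), so $\Phi$ is in fact a local isomorphism of DQ-algebras.

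Having produced a local isomorphism $\Phi\colon\mathbb{A}_1\xrightarrow{\cong}\mathbb{A}_0$ of DQ-algebras, I would invoke Lemma \ref{lemma: isom dq same poisson} to conclude that $\mathbb{A}_0$ and $\mathbb{A}_1$ have equal associated Poisson bi-vectors on $U$; since this holds on a neighborhood of every point and the bi-vectors are global sections of $\bigwedge^2\mathcal{T}_{X/\mathcal{P}}$, equality globally follows.

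I expect the main obstacle to be the careful bookkeeping of the two-sided module structure in the multiplicativity step, specifically ensuring the order-reversal coming from $\mathbb{A}_0^{\op}$ is tracked correctly so that $\Phi$ comes out as a genuine (not anti-) homomorphism. The rest is formal: the passage from the local isomorphism to equality of Poisson structures is exactly the content already packaged in Lemma \ref{lemma: isom dq same poisson}, so no independent computation of the brackets is needed.
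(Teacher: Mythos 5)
Your proposal is correct and follows essentially the same route as the paper: the paper's proof simply notes that a bi-invertible bimodule makes $\mathbb{A}_0$ and $\mathbb{A}_1$ locally isomorphic and then invokes Lemma \ref{lemma: isom dq same poisson}, while you fill in the details of that first step. The local isomorphism you construct from a generator $b$ is exactly the map $\Phi_b$ that the paper itself introduces (in the opposite direction) immediately after this lemma, so your multiplicativity bookkeeping is a worthwhile but fully consistent elaboration of the intended argument.
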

\begin{proof}
Existence of a bi-invertible module implies that the algebras are locally isomorphic. It follows from Lemma \ref{lemma: isom dq same poisson} that locally isomorphic algebras give rise to the same Poisson bi-vector.
\end{proof}

In what follows we suppose that $\mathbb{A}_{01}$ is a bi-invertible $\mathbb{A}_1\otimes_{\mathbb{C}[[t]]}\mathbb{A}_0^\op$-module. Let $\gr\mathbb{A}_{01} = \mathbb{A}_{01}/t\mathbb{A}_{01}$ and let $\gr \colon \mathbb{A}_{01} \to \gr\mathbb{A}_{01}$ denote the canonical projection. Let $\poisson$ denote the common Poisson bi-vector of $\mathbb{A}_0$ and $\mathbb{A}_1$ and let $\Pi$ denote the corresponding Lie algebroid (structure on $\Omega^1_{X/\mathcal{P}}$, see \ref{subsubsection: Poisson structures}).

Let $\mathbb{A}_{01}^\times$ denote the subsheaf of $\mathbb{A}_{01}$ consisting of sections $b \in \mathbb{A}_{01}$ such that the maps $\mathbb{A}_1 \ni a \mapsto (a\otimes 1)b \in \mathbb{A}_{01}$ and $\mathbb{A}_0 \ni a \mapsto (1\otimes a)b \in \mathbb{A}_{01}$) are isomorphisms of $\mathbb{A}_1$- (respectively, $\mathbb{A}_0^\op$-) modules. For $b \in \mathbb{A}_{01}^\times$ let
\[
\Phi_b \colon \mathbb{A}_0 \to \mathbb{A}_1
\]
denote the morphism of DQ-algebras uniquely determined by
\[
(\Phi_b(a)\otimes 1)\cdot b = (1\otimes a)b .
\]
The map $\gr \colon \mathbb{A}_{01} \to \gr\mathbb{A}_{01}$ restricts to $\gr \colon \mathbb{A}_{01}^\times \to (\gr\mathbb{A}_{01})^\times$.

The assignment $b \mapsto \Phi_b$ defines a map
\begin{equation}\label{inv bimod to hom alg}
\mathbb{A}_{01}^\times \to \shHom_{\DQA}(\mathbb{A}_0, \mathbb{A}_1),
\end{equation}
while the functor $\Sigma_1$ give rise to the map
\[
\Sigma_1 \colon \shHom_{\DQA}(\mathbb{A}_0, \mathbb{A}_1) \to \shHom_{(\Omega_\Pi^1 \xrightarrow{d_\Pi} \Omega_\Pi^{2,cl})[1]}(\Sigma_1(\mathbb{A}_0), \Sigma_1(\mathbb{A}_1)).
\]

\begin{lemma}\label{lemma: bimod preserve curv}
{~}
\begin{enumerate}
\item The composition
\[
\mathbb{A}_{01}^\times \xrightarrow{\eqref{inv bimod to hom alg}} \shHom_{\DQA}(\mathbb{A}_0, \mathbb{A}_1) \xrightarrow{\Sigma_1} \shHom_{(\Omega_\Pi^1 \xrightarrow{d_\Pi} \Omega_\Pi^{2,cl})[1]}(\Sigma_1(\mathbb{A}_0), \Sigma_1(\mathbb{A}_1))
\]
factors through $(\gr\mathbb{A}_{01})^\times$.

\item The induced map
\begin{equation}\label{sigma one bimod}
(\gr\mathbb{A}_{01})^\times \to \shHom_{(\Omega_\Pi^1 \xrightarrow{d_\Pi} \Omega_\Pi^{2,cl})[1]}(\Sigma_1(\mathbb{A}_0), \Sigma_1(\mathbb{A}_1))
\end{equation}
is a morphism of torsors compatible with the morphism of groups $d_\Pi\log \colon \mathcal{O}^\times_{X/\mathcal{P}} \to \Omega_\Pi^1$.
\end{enumerate}
\end{lemma}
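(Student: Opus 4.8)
The plan is to argue locally, where bi-invertibility of $\mathbb{A}_{01}$ guarantees that $\mathbb{A}_{01}^\times$ is non-empty, and to reduce both assertions to the behaviour of $\Sigma_1$ under inner automorphisms recorded in part~(\ref{lemma: sp std sec T-torsor.3}) of Lemma~\ref{lemma: sp std sec T-torsor}. Fix a local section $b_0\in\mathbb{A}_{01}^\times$; it is a free generator of $\mathbb{A}_{01}$ as a left $\mathbb{A}_1$-module. Any other $b\in\mathbb{A}_{01}^\times$ is then of the form $b=(c\otimes 1)b_0$ for a unique $c\in\mathbb{A}_1$, and comparing generators shows $c\in\mathbb{A}_1^\times$. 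First I would extract two consequences of this presentation. Expanding the defining relation $(\Phi_b(a)\otimes 1)b=(1\otimes a)b$ and using that $(1\otimes a)$ and $(c\otimes 1)$ commute in $\mathbb{A}_1\otimes_{\mathbb{C}[[t]]}\mathbb{A}_0^\op$, freeness of $b_0$ yields
\[
\Phi_b=\Ad(c)\circ\Phi_{b_0}.
\]
On the other hand, applying $\gr$ to $b=(c\otimes 1)b_0$ gives $\gr(b)=\sigma(c)\cdot\gr(b_0)$ in the line bundle $\gr\mathbb{A}_{01}$, where $\sigma(c)\in\mathcal{O}^\times_{X/\mathcal{P}}$ is the reduction of $c$ modulo $t$.

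Next I would combine these with the functoriality of $\Sigma_1$. Since $\Sigma_1$ is a functor, $\Sigma_1(\Phi_b)=\Sigma_1(\Ad c)\circ\Sigma_1(\Phi_{b_0})$, and Lemma~\ref{lemma: sp std sec T-torsor}(\ref{lemma: sp std sec T-torsor.3}) identifies $\Sigma_1(\Ad c)$ with the automorphism $\phi\mapsto\phi+d_\Pi\log(\sigma(c))$ of the torsor $\Sigma_1(\mathbb{A}_1)$. Hence
\[
\Sigma_1(\Phi_b)=\Sigma_1(\Phi_{b_0})+d_\Pi\log(\sigma(c)),
\]
the sum being taken in the sheaf $\shHom_{(\Omega^1_\Pi\xrightarrow{d_\Pi}\Omega^{2,cl}_\Pi)[1]}(\Sigma_1(\mathbb{A}_0),\Sigma_1(\mathbb{A}_1))$, which is a torsor under the automorphism sheaf of an object of $(\Omega^1_\Pi\to\Omega^{2,cl}_\Pi)[1]$, namely $\ker d_\Pi=\Omega^{1,cl}_\Pi$. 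Part~(1) then follows: if $\gr(b)=\gr(b')$ for $b,b'\in\mathbb{A}_{01}^\times$, the corresponding units $c,c'$ satisfy $\sigma(c)=\sigma(c')$ (as $\gr(b_0)$ is a nowhere-vanishing generator), so $d_\Pi\log(\sigma(c))=d_\Pi\log(\sigma(c'))$ and the two values of $\Sigma_1(\Phi_{(-)})$ coincide. Thus the composition is constant on the fibres of $\gr$ and factors through $(\gr\mathbb{A}_{01})^\times$.

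Finally, for part~(2) I would read the torsor compatibility off the same displayed formula. The source $(\gr\mathbb{A}_{01})^\times$ is an $\mathcal{O}^\times_{X/\mathcal{P}}$-torsor; given $u\in\mathcal{O}^\times_{X/\mathcal{P}}$, a local lift $\tilde u\in\mathbb{A}_1^\times$ with $\sigma(\tilde u)=u$ satisfies $u\cdot\gr(b_0)=\gr\bigl((\tilde u\otimes 1)b_0\bigr)$, so applying the formula with $c=\tilde u$ shows that the induced map \eqref{sigma one bimod} carries $u\cdot\gr(b_0)$ to $\Sigma_1(\Phi_{b_0})+d_\Pi\log(u)$. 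This is precisely the statement that \eqref{sigma one bimod} intertwines the $\mathcal{O}^\times_{X/\mathcal{P}}$-action on the source with the $\Omega^{1,cl}_\Pi$-action on the target along $d_\Pi\log$, i.e. that it is a morphism of torsors compatible with $d_\Pi\log$.

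The only genuine content lies in the first paragraph: establishing $\Phi_b=\Ad(c)\circ\Phi_{b_0}$ and matching the class of $b$ in $\gr\mathbb{A}_{01}$ with $\sigma(c)$. Once these identities are in hand, everything is a formal consequence of Lemma~\ref{lemma: sp std sec T-torsor}(\ref{lemma: sp std sec T-torsor.3}). I expect no difficulty with the $\Omega^{2,cl}_\Pi$-component (the subprincipal curvatures): since all shifts take values in $\Omega^{1,cl}_\Pi=\ker d_\Pi$ they automatically preserve the curving, which is already built into the fact that $\Sigma_1$ lands in $(\Omega^1_\Pi\to\Omega^{2,cl}_\Pi)[1]$.
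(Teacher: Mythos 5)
Your proof is correct and takes essentially the same route as the paper's: both reduce the statement to the behaviour of $\Sigma_1$ under inner automorphisms (Lemma \ref{lemma: sp std sec T-torsor}, part (\ref{lemma: sp std sec T-torsor.3})) by fixing a local section of $\mathbb{A}_{01}^\times$ and using functoriality of $\Sigma_1$. The only difference is organizational: the paper uses the section to identify $(\mathbb{A}_0,\mathbb{A}_1,\mathbb{A}_{01})$ with the diagonal case $(\mathbb{A},\mathbb{A},\mathbb{A})$ and then invokes the formula for $\Sigma_1(\Ad a)$, whereas you stay in the general setting and derive $\Phi_b=\Ad(c)\circ\Phi_{b_0}$ explicitly, which in effect supplies the details the paper's transport-of-structure step leaves to the reader.
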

\begin{proof}
A section $b\in\mathbb{A}_{01}^\times$ determines the isomorphism $\Phi_b \colon \mathbb{A}_0 \to \mathbb{A}_1$  and a compatible
isomorphism of $\mathbb{A}_{01}$ with $\mathbb{A}_1$ viewed as $\mathbb{A}_1\otimes_{\mathbb{C}[[t]]}\mathbb{A}_1^\op$-module.
Functoriality of $\Sigma_1$ shows that it is sufficient to prove the statement in the case when $\mathbb{A}_0= \mathbb{A}_1= \mathbb{A}$, and $\mathbb{A}_{01}= \mathbb{A}$ as an $\mathbb{A}$-bi-module. In this case the morphism
\[
\mathbb{A} ^\times \to \shHom_{\DQA}(\mathbb{A}, \mathbb{A})
\]
is given by $a \mapsto \Ad a$, and both parts of the lemma follow from the formula for action of inner automorphisms 
on $\Sigma_1(\mathbb{A})$ (Lemma \ref{lemma: sp std sec T-torsor}, part (\ref{lemma: sp std sec T-torsor.3})):

\[
\Sigma_1(\Ad a)(\phi) = \phi+ d_\Pi\log(\sigma(a)),\ \phi \in \Sigma_1(\mathbb{A}).
\]

.
\end{proof}

\begin{notation}
The map \eqref{sigma one bimod} will be denoted $\Sigma_1(\mathbb{A}_{01})$.
\end{notation}

The proof of the following lemma is left to the reader.

\begin{lemma}\label{lemma: Sigma 1 map of bimodules}
Suppose that $\Psi \colon \mathbb{A}_{01}^{(1)} \to \mathbb{A}_{01}^{(0)}$ is an isomorphism of bi-invertible $\mathbb{A}_1\otimes_{\mathbb{C}[[t]]}\mathbb{A}_0^\op$-modules. Then $\Sigma_1(\mathbb{A}_{01}^{(1)}) = (\gr\Psi)^\times \circ \Sigma_1(\mathbb{A}_{01}^{(0)})$
\end{lemma}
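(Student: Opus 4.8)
The plan is to reduce the statement to the elementary observation that an isomorphism of bi-modules intertwines the associated algebra morphisms $\Phi_b$, and then to propagate this compatibility through the construction of $\Sigma_1(\mathbb{A}_{01})$, all of which factors through $\gr$. First I would analyze how $\Psi$ interacts with the units and with the assignment $b\mapsto\Phi_b$. Since $\Psi\colon\mathbb{A}_{01}^{(1)}\to\mathbb{A}_{01}^{(0)}$ is an isomorphism of $\mathbb{A}_1\otimes_{\mathbb{C}[[t]]}\mathbb{A}_0^\op$-modules it carries local generators to local generators, hence restricts to a bijection $\Psi\colon(\mathbb{A}_{01}^{(1)})^\times\to(\mathbb{A}_{01}^{(0)})^\times$. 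Applying $\Psi$ to the defining identity $(\Phi_b(a)\otimes 1)\cdot b=(1\otimes a)b$ (computed in $\mathbb{A}_{01}^{(1)}$) and using $\mathbb{A}_1\otimes\mathbb{A}_0^\op$-linearity of $\Psi$ yields $(\Phi_b(a)\otimes 1)\cdot\Psi(b)=(1\otimes a)\Psi(b)$ in $\mathbb{A}_{01}^{(0)}$. By the uniqueness in the definition of $\Phi_{\Psi(b)}$ this gives $\Phi_{\Psi(b)}=\Phi_b$ for every $b\in(\mathbb{A}_{01}^{(1)})^\times$; that is, the two maps \eqref{inv bimod to hom alg} attached to $\mathbb{A}_{01}^{(1)}$ and $\mathbb{A}_{01}^{(0)}$ agree after precomposition with $\Psi$.

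Next I would push this identity through $\Sigma_1$ and down to the associated graded. By construction (see Lemma \ref{lemma: bimod preserve curv}) the composite $(\mathbb{A}_{01})^\times\to\shHom_{\DQA}(\mathbb{A}_0,\mathbb{A}_1)\xrightarrow{\Sigma_1}\shHom(\Sigma_1(\mathbb{A}_0),\Sigma_1(\mathbb{A}_1))$ factors as $\Sigma_1(\mathbb{A}_{01})\circ\gr$. Since $\Psi$ is $t$-linear it is compatible with reduction modulo $t$, so on units $\gr\circ\Psi=(\gr\Psi)^\times\circ\gr$. Combining this with $\Phi_{\Psi(b)}=\Phi_b$ gives, for all $b\in(\mathbb{A}_{01}^{(1)})^\times$,
\[
\Sigma_1(\mathbb{A}_{01}^{(1)})(\gr b)=\Sigma_1(\Phi_b)=\Sigma_1(\Phi_{\Psi(b)})=\Sigma_1(\mathbb{A}_{01}^{(0)})\bigl((\gr\Psi)^\times(\gr b)\bigr).
\]

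Finally, since $\gr\colon(\mathbb{A}_{01}^{(1)})^\times\to(\gr\mathbb{A}_{01}^{(1)})^\times$ is locally surjective — by $t$-adic completeness a section of $\mathbb{A}_{01}^{(1)}$ is bi-invertible as soon as its reduction generates $\gr\mathbb{A}_{01}^{(1)}$, so every local generator lifts — both sides of the desired identity are determined by their values on elements of the form $\gr b$, and the displayed equality yields $\Sigma_1(\mathbb{A}_{01}^{(1)})=\Sigma_1(\mathbb{A}_{01}^{(0)})\circ(\gr\Psi)^\times$, which is the claimed identity. I expect the only points requiring any care to be this last surjectivity and the routine check that the maps in sight respect the relevant $(\Omega^1_\Pi\xrightarrow{d_\Pi}\Omega^{2,cl}_\Pi)$-torsor structures; the core computation $\Phi_{\Psi(b)}=\Phi_b$ is immediate from bi-linearity, which is precisely why the argument is short enough to be left to the reader.
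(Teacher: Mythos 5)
Your proof is correct; since the paper leaves this lemma's proof to the reader, there is no argument of its own to compare against, and yours is the natural one: the key identity $\Phi_{\Psi(b)}=\Phi_b$ follows from $\mathbb{A}_1\otimes_{\mathbb{C}[[t]]}\mathbb{A}_0^{\op}$-linearity of $\Psi$ together with uniqueness in the definition of $\Phi_b$, the factorization through $(\gr\mathbb{A}_{01})^\times$ is exactly Lemma \ref{lemma: bimod preserve curv}, and the local surjectivity of $\gr\colon\mathbb{A}_{01}^\times\to(\gr\mathbb{A}_{01})^\times$ (any lift of a local generator is bi-invertible, by $t$-adic completeness of DQ-algebras) legitimately reduces the asserted equality of sheaf morphisms to sections of the form $\gr b$. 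One cosmetic point in your favor: the paper's expression $(\gr\Psi)^\times\circ\Sigma_1(\mathbb{A}_{01}^{(0)})$ only typechecks when read in diagrammatic order, so your reading of the statement as $\Sigma_1(\mathbb{A}_{01}^{(1)})=\Sigma_1(\mathbb{A}_{01}^{(0)})\circ(\gr\Psi)^\times$ is the correct one.
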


\subsection{Quasi-classical limits of bi-modules}\label{subsection: Quasi-classical limits of bi-modules}
The canonical contravariant connection on the classical limit of a bi-module deformation was constructed in \cite{B02} (see also \cite{BW04}) in the setting of star-products. The construction presented below is a generalization of that of loc. cit. to the case of DQ-algebras.

Suppose that $\mathbb{A}_{01}$ is a bi-invertible $\mathbb{A}_1\otimes_{\mathbb{C}[[t]]}\mathbb{A}_0^\op$-module. By Lemma \ref{lemma: bi-inv same poisson} $\mathbb{A}_0$ and $\mathbb{A}_1$ give rise to the same Poisson bi-vector $\poisson$. Let $\Pi$ denote the corresponding Lie algebroid (see \ref{subsubsection: Poisson structures}).

Then, $\gr\mathbb{A}_{01}$ has a canonical structure of a $\mathcal{O}_{X/\mathcal{P}}$-bi-module which is central, i.e. the two $\mathcal{O}_{X/\mathcal{P}}$-module structures coincide. Moreover, $\gr\mathbb{A}_{01}$ is locally free of rank one over $\mathcal{O}_{X/\mathcal{P}}$.

Therefore, the composition
\begin{equation*}
\mathbb{A}_1\times_{\mathcal{O}_{X/\mathcal{P}}}\mathbb{A}_0\otimes \mathbb{A}_{01} \to \mathbb{A}_{01} \xrightarrow{\gr} \gr\mathbb{A}_{01}
\end{equation*}
defined by $(a_1,a_0)\otimes b \mapsto (a_1b - ba_0) + t\mathbb{A}_{01}$ is trivial. Thus, the first map factors through the inclusion $t\mathbb{A}_{01} \hookrightarrow \mathbb{A}_{01}$.
The composition
\begin{equation*}
\mathbb{A}_1\times_{\mathcal{O}_{X/\mathcal{P}}}\mathbb{A}_0\otimes \mathbb{A}_{01} \to t\mathbb{A}_{01} \xrightarrow{t^{-1}} \mathbb{A}_{01} \to \gr\mathbb{A}_{01}
\end{equation*}
factors through a unique map
\begin{equation*}
\mathbb{A}_1/t^2\mathbb{A}_1\times_{\mathcal{O}_{X/\mathcal{P}}}\mathbb{A}_0/t^2 \mathbb{A}_0 \otimes \gr\mathbb{A}_{01} \to \gr\mathbb{A}_{01}
\end{equation*}
which gives rise to the map
\begin{equation}\label{action of fib prod}
\mathbb{A}_1/t^2\mathbb{A}_1 \times_{\mathcal{O}_{X/\mathcal{P}}} \mathbb{A}_0/t^2 \mathbb{A}_0 \to \shEnd_\mathbb{C}(\gr\mathbb{A}_{01})
\end{equation}

The sheaf $\mathbb{A}_1/t^2\mathbb{A}_1\ominus\mathbb{A}_0/t^2 \mathbb{A}_0$ (the Baer difference of extensions of $\mathcal{O}_{X/\mathcal{P}}$ by $\mathcal{O}_{X/\mathcal{P}}$) is defined by the push-out square
\[
\begin{CD}
\mathcal{O}_{X/\mathcal{P}}\times\mathcal{O}_{X/\mathcal{P}} @>{(\cdot t)\times(\cdot t)}>> \mathbb{A}_1/t^2\mathbb{A}_1 \times_{\mathcal{O}_{X/\mathcal{P}}} \mathbb{A}_0/t^2 \mathbb{A}_0 \\
@VVV @VVV \\
\mathcal{O}_{X/\mathcal{P}} @>>> \mathbb{A}_1/t^2\mathbb{A}_1\ominus\mathbb{A}_0/t^2 \mathbb{A}_0
\end{CD}
\]
where the left vertical map is defined by $(f_1,f_0) \mapsto f_1 - f_0$. Here, for a DQ-algebra $\mathbb{A}$ we use the identification $\mathcal{O}_{X/\mathcal{P}} \cong t\mathbb{A}/t^2\mathbb{A}$ given by the multiplication by $t$.

The sheaf $\mathbb{A}_1/t^2\mathbb{A}_1 \times_{\mathcal{O}_{X/\mathcal{P}}} \mathbb{A}_0/t^2 \mathbb{A}_0$ has a canonical structure of a Lie algebra as a Lie subalgebra of $\mathbb{A}_1/t^2\mathbb{A}_1 \times \mathbb{A}_0/t^2 \mathbb{A}_0$ with the bracket $\widetilde{\{\ ,\ \}}$ (see \ref{subsection: Subprincipal symbols}) on each factor. The bracket on $\mathbb{A}_1/t^2\mathbb{A}_1 \times_{\mathcal{O}_{X/\mathcal{P}}} \mathbb{A}_0/t^2 \mathbb{A}_0$ descends to a Lie bracket on $\mathbb{A}_1/t^2\mathbb{A}_1\ominus\mathbb{A}_0/t^2 \mathbb{A}_0$.

\begin{prop}\label{prop: baer to atiyah}
{~}
\begin{enumerate}
\item The map \eqref{action of fib prod} is a morphism of Lie algebras.

\item The map \eqref{action of fib prod} factors through $\mathbb{A}_1/t^2\mathbb{A}_1\ominus\mathbb{A}_0/t^2 \mathbb{A}_0$.

\item The induced map
\begin{equation*}
\mathbb{A}_1/t^2\mathbb{A}_1 \ominus \mathbb{A}_0/t^2 \mathbb{A}_0 \to \shEnd_\mathbb{C}(\gr\mathbb{A}_{01})
\end{equation*}
takes values in the Atiyah algebra $\mathcal{A}_{\gr\mathbb{A}_{01}}$ of the line bundle $\gr\mathbb{A}_{01}$ (see \ref{subsubsection: Atiyah algebras}).
\item The diagram
\begin{equation*}
\begin{CD}
\mathcal{O}_{X/\mathcal{P}} @>>> \mathbb{A}_1/t^2\mathbb{A}_1\ominus\mathbb{A}_0/t^2 \mathbb{A}_0 @>>>\mathcal{O}_{X/\mathcal{P}} \\
@V{f\mapsto f\cdot id}VV @VVV @VV{d_\Pi}V \\
\shEnd_{\mathcal{O}_{X/\mathcal{P}}}(\gr\mathbb{A}_{01}) @>>> \mathcal{A}_{\gr\mathbb{A}_{01}} @>>> \mathcal{T}_{X/\mathcal{P}}
\end{CD}
\end{equation*}
is commutative.
\end{enumerate}
\end{prop}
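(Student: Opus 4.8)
The plan is to realize the map \eqref{action of fib prod}, which I will denote $\rho$, as the reduction modulo $t$ of a single operator on $\mathbb{A}_{01}$ assembled from left and right multiplications, and then to extract all four assertions from commutator computations. For $a_1\in\mathbb{A}_1$ and $a_0\in\mathbb{A}_0$, write $L_{a_1}$ and $R_{a_0}$ for the operators of left multiplication by $a_1$ and right multiplication by $a_0$ on $\mathbb{A}_{01}$. Associativity of the bi-module structure gives $[L_{a_1},R_{a_0'}]=0$ for all such elements, together with $L_{a_1}L_{a_1'}=L_{a_1a_1'}$ and $R_{a_0}R_{a_0'}=R_{a_0'a_0}$. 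When $\sigma(a_1)=\sigma(a_0)$---the defining condition of the fibre product---centrality of the $\mathcal{O}_{X/\mathcal{P}}$-bi-module $\gr\mathbb{A}_{01}$ forces $(L_{a_1}-R_{a_0})(\mathbb{A}_{01})\subseteq t\mathbb{A}_{01}$, so that $\theta(a_1,a_0):=t^{-1}(L_{a_1}-R_{a_0})$ is a well-defined $\mathbb{C}[[t]]$-linear operator on $\mathbb{A}_{01}$ whose reduction modulo $t$ is precisely $\rho(a_1,a_0)$. First I would record this, along with the fact that $\theta(a_1,a_0)$ depends only on the classes of $a_1$ and $a_0$ modulo $t^2$.

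For part (1), the commutativity of left and right multiplications makes the bracket of operators telescope to
\[
[L_{a_1}-R_{a_0},\,L_{a_1'}-R_{a_0'}] = L_{[a_1,a_1']}-R_{[a_0,a_0']},
\]
the sign on the right-hand term coming from the order reversal in $R_{a_0}R_{a_0'}=R_{a_0'a_0}$. Since $[a_i,a_i']\in t\mathbb{A}_i$ represents $t\cdot\widetilde{\{a_i,a_i'\}}$, dividing this identity by $t^2$ and reducing modulo $t$ identifies $[\rho(a_1,a_0),\rho(a_1',a_0')]$ with $\rho(\widetilde{\{a_1,a_1'\}},\widetilde{\{a_0,a_0'\}})$, which is exactly the Lie algebra morphism property. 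For part (2), I would observe that the kernel of the projection from the fibre product to the Baer difference $\mathbb{A}_1/t^2\mathbb{A}_1\ominus\mathbb{A}_0/t^2\mathbb{A}_0$ is the diagonal $\{(ft,ft):f\in\mathcal{O}_{X/\mathcal{P}}\}$ of central elements; a direct check shows $\theta(ft,ft)$ reduces modulo $t$ to $f\cdot\id-f\cdot\id=0$ by centrality of $\gr\mathbb{A}_{01}$, so $\rho$ descends to the Baer difference.

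For part (3), I would compute, for $f\in\mathcal{O}_{X/\mathcal{P}}$ acting by the central module structure on $b\in\gr\mathbb{A}_{01}$ and any lift $\widetilde f\in\mathbb{A}_1$ of $f$,
\[
[\rho(a_1,a_0),f](b) = \gr\bigl(t^{-1}[a_1,\widetilde f]\,\widetilde b\bigr) = \{\sigma(a_1),f\}\,b = \widetilde{\poisson}(d\sigma(a_1))(f)\cdot b,
\]
where the middle equality is the definition of the associated Poisson bracket. Thus $\rho(a_1,a_0)$ is a first order differential operator on the line bundle $\gr\mathbb{A}_{01}$ with scalar principal symbol, hence an element of the Atiyah algebra $\mathcal{A}_{\gr\mathbb{A}_{01}}$ of \ref{subsubsection: Atiyah algebras}, and its anchor is the vector field $\widetilde{\poisson}(d\sigma(a_1))=\widetilde{\poisson}(d\sigma(a_0))$. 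For part (4), the left square commutes because $\theta(ft,0)$ reduces to $f\cdot\id$, which matches the inclusion $\shEnd_{\mathcal{O}_{X/\mathcal{P}}}(\gr\mathbb{A}_{01})\hookrightarrow\mathcal{A}_{\gr\mathbb{A}_{01}}$; the right square commutes because the symbol computed in part (3) is $\widetilde{\poisson}(d\sigma(a_1))$, which is exactly $d_\Pi$ applied to the image $\sigma(a_1)\in\mathcal{O}_{X/\mathcal{P}}$ of the Baer difference, recalling $d_\Pi f=\widetilde{\poisson}(df)$ from \ref{subsubsection: the de Rham complex}.

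The step I expect to be most delicate is the bookkeeping in part (1): one must verify that first dividing the operator commutator by $t^2$ and then reducing modulo $t$ agrees with the bracket $\widetilde{\{\ ,\ \}}$ defined factor by factor on the fibre product, and in particular that this is independent of the choice of lifts modulo $t^2$. Once the commuting-multiplications identity is in place, however, each remaining verification is a one-line computation; the only genuinely structural inputs are the associativity of the bi-module, giving $[L_{a_1},R_{a_0'}]=0$, and the centrality of $\gr\mathbb{A}_{01}$.
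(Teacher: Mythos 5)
Your proposal is correct and follows essentially the same route as the paper: your operator $\theta(a_1,a_0)=t^{-1}(L_{a_1}-R_{a_0})$ is exactly the paper's map $b\mapsto t^{-1}(a_1b-ba_0)$, and your telescoping identity $[L_{a_1}-R_{a_0},\,L_{a_1'}-R_{a_0'}]=L_{[a_1,a_1']}-R_{[a_0,a_0']}$ is the paper's double-commutator computation in compact form, with parts (3) and (4) resting on the same evaluation $[a,\widetilde f\,]\equiv t\{\sigma(a),f\}$. The only cosmetic difference is in part (2), where you check that the kernel of the surjection from the fibre product onto the Baer difference (the diagonal $\{(ft,ft)\}$) is killed, whereas the paper verifies the push-out compatibility $(f_1,f_0)\otimes b\mapsto(f_1-f_0)\cdot b$ directly; the two arguments are equivalent.
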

\begin{proof}
For $(a,a^\prime) \in \mathbb{A}_1\times_{\mathcal{O}_{X/\mathcal{P}}}\mathbb{A}_0$ and $b\in \mathbb{A}_{01}$ let $[(a,a^\prime),b] := ab - ba^\prime$.
\begin{enumerate}
\item For $(a_i,a^\prime_i) \in \mathbb{A}_1\times_{\mathcal{O}_{X/\mathcal{P}}}\mathbb{A}_0$, $i=1,2$ and $b\in \mathbb{A}_{01}$
\begin{multline*}
[(a_1,a^\prime_1), [(a_2,a^\prime_2), b]] - [(a_2,a^\prime_2), [(a_1,a^\prime_1), b]] = \\
[(a_1,a^\prime_1), a_2b - ba^\prime_2] - [(a_2,a^\prime_2),a_1b - ba^\prime_1] = \\
a_1a_2b - a_2ba^\prime_1 -  a_1ba^\prime_2 + ba^\prime_2a^\prime_1 - a_2a_1b + a_1ba^\prime_2 + a_2ba^\prime_1 - ba^\prime_1a^\prime_2 = \\
[a_1, a_2]b - b[a^\prime_1, a^\prime_2] = [([a_1, a_2],[a^\prime_1, a^\prime_2]),b]  ,
\end{multline*}
which implies the claim.

\item The composition
\begin{equation*}
\left(\mathcal{O}_{X/\mathcal{P}}\times\mathcal{O}_{X/\mathcal{P}}\right) \otimes \gr\mathbb{A}_{01} \to \mathbb{A}_1/t^2\mathbb{A}_1\times_{\mathcal{O}_{X/\mathcal{P}}}\mathbb{A}_0/t^2 \mathbb{A}_0 \otimes \gr\mathbb{A}_{01} \to \gr\mathbb{A}_{01}
\end{equation*}
is given by $(f_1,f_0)\otimes b \mapsto (f_1 - f_0)\cdot b$. This means that the map \eqref{action of fib prod} factors through $\mathbb{A}_1/t^2\mathbb{A}_1\ominus\mathbb{A}_0/t^2 \mathbb{A}_0$.

\item 
\[
[(a,a^\prime),fb] - f[(a,a^\prime),fb] = [a,f]b = t\{\sigma(a), f\}b + t^2\mathbb{A}_{01}
\]
which shows that $\mathbb{A}_1/t^2\mathbb{A}_1\ominus\mathbb{A}_0/t^2 \mathbb{A}_0$ acts by differential operators of order one.

\item The same calculation shows that the principal symbol of the operator $b \mapsto [(a,a'),b]$ is given by $d_\Pi\sigma(a)$.

\end{enumerate}
\end{proof}

Let $\widetilde{\Pi}_{\gr\mathbb{A}_{01}} = \Pi\times_{\mathcal{T}_{X/\mathcal{P}}}\mathcal{A}_{\gr\mathbb{A}_{01}}$ denote the $\mathcal{O}_{X/\mathcal{P}}$-extension of $\Pi$ obtained by pull-back by the anchor map $\widetilde{\poisson}$. Thus, we have the commutative diagram
\begin{equation*}
\begin{CD}
\mathcal{O}_{X/\mathcal{P}} @>>> \mathbb{A}_1/t^2\mathbb{A}_1\ominus\mathbb{A}_0/t^2 \mathbb{A}_0 @>>>\mathcal{O}_{X/\mathcal{P}} \\
@| @VVV @VV{d}V \\
\mathcal{O}_{X/\mathcal{P}} @>>> \widetilde{\Pi}_{\gr\mathbb{A}_{01}} @>>> \Pi
\end{CD}
\end{equation*}
For $\phi_i \in \Sigma_1(\mathbb{A}_i)$, $i=0,1$, the composition
\[
\mathcal{O}_{X/\mathcal{P}} \xrightarrow{(\phi_1,\phi_0)} \mathbb{A}_1/t^2\mathbb{A}_1\ominus\mathbb{A}_0/t^2 \mathbb{A}_0 \to \widetilde{\Pi}_{\gr\mathbb{A}_{01}}
\]
is a derivation, hence factors uniquely as
\[
\mathcal{O}_{X/\mathcal{P}} \xrightarrow{d} \Pi \xrightarrow{\nabla_{\phi_1,\phi_0}} \widetilde{\Pi}_{\gr\mathbb{A}_{01}}
\]
with $\nabla_{\phi_1,\phi_0} \in \CONN_\Pi(\widetilde{\Pi}_{\gr\mathbb{A}_{01}})$.

The assignment $(\phi_1,\phi_0) \mapsto \nabla_{\phi_1,\phi_0}$ defines the map
\begin{equation}\label{diff Sigma to conn}
\nabla(\mathbb{A}_{01}) \colon \Sigma_1(\mathbb{A}_1)\ominus\Sigma_1(\mathbb{A}_0) \to \CONN_\Pi(\widetilde{\Pi}_{\gr\mathbb{A}_{01}}) .
\end{equation}

The proof of the following lemma is left to the reader.

\begin{lemma}\label{lemma: nabla map of bimodules}
Suppose that $\Psi \colon \mathbb{A}_{01}^{(1)} \to \mathbb{A}_{01}^{(0)}$ is an isomorphism of bi-invertible $\mathbb{A}_1\otimes_{\mathbb{C}[[t]]}\mathbb{A}_0^\op$-modules. Then $\nabla(\mathbb{A}_{01}^{(0)}) = \nabla(\mathbb{A}_{01}^{(1)}) \circ \Conn_\Pi(\Ad(\Psi))$
\end{lemma}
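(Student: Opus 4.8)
The plan is to reduce everything modulo $t$ and exploit the bimodule-linearity of $\Psi$. I would first write $\gr\Psi \colon \gr\mathbb{A}_{01}^{(1)} \to \gr\mathbb{A}_{01}^{(0)}$ for the reduction of $\Psi$ modulo $t$; since $\Psi$ is $\mathbb{C}[[t]]$-linear and respects both module structures, $\gr\Psi$ is an isomorphism of line bundles over $\mathcal{O}_{X/\mathcal{P}}$. Conjugation by $\gr\Psi$ carries differential operators on $\gr\mathbb{A}_{01}^{(1)}$ to differential operators on $\gr\mathbb{A}_{01}^{(0)}$ and hence yields a canonical isomorphism of Atiyah algebras $\mathcal{A}_{\gr\mathbb{A}_{01}^{(1)}} \xrightarrow{\cong} \mathcal{A}_{\gr\mathbb{A}_{01}^{(0)}}$; as the bundles have rank one this isomorphism restricts to the identity on the central $\shEnd_{\mathcal{O}_{X/\mathcal{P}}} = \mathcal{O}_{X/\mathcal{P}}$ and on the anchor $\mathcal{T}_{X/\mathcal{P}}$, so after pulling back along $\widetilde{\poisson}$ it induces the isomorphism of $\mathcal{O}$-extensions $\Ad(\Psi) \colon \widetilde{\Pi}_{\gr\mathbb{A}_{01}^{(1)}} \to \widetilde{\Pi}_{\gr\mathbb{A}_{01}^{(0)}}$ appearing in the statement. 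Under the equivalence \eqref{O-ext to tors}, $\Conn_\Pi(\Ad(\Psi))$ is then the isomorphism of $\Omega^1_\Pi$-torsors given by post-composition $\nabla \mapsto \Ad(\Psi)\circ\nabla$ of splittings.

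The key step is to check that the two instances of the action map \eqref{action of fib prod} are intertwined by $\gr\Psi$. Denoting by $\rho_i \colon \mathbb{A}_1/t^2\mathbb{A}_1 \times_{\mathcal{O}_{X/\mathcal{P}}} \mathbb{A}_0/t^2\mathbb{A}_0 \to \shEnd_\mathbb{C}(\gr\mathbb{A}_{01}^{(i)})$ the map \eqref{action of fib prod} for $\mathbb{A}_{01}^{(i)}$, I would use that for $(a,a')$ in the fibre product and $b \in \mathbb{A}_{01}^{(1)}$ one has $ab - ba' \in t\mathbb{A}_{01}^{(1)}$ (centrality of $\gr\mathbb{A}_{01}$) together with the bimodule-linearity $\Psi(ab - ba') = a\Psi(b) - \Psi(b)a'$. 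Dividing by $t$ and reducing modulo $t$, which is legitimate since $\Psi$ is $\mathbb{C}[[t]]$-linear, yields $\gr\Psi \circ \rho_1(a,a') = \rho_0(a,a')\circ\gr\Psi$, i.e. $\rho_0(a,a') = \Ad(\gr\Psi)(\rho_1(a,a'))$. By Proposition \ref{prop: baer to atiyah} both $\rho_i$ descend to the Baer difference $\mathbb{A}_1/t^2\mathbb{A}_1\ominus\mathbb{A}_0/t^2\mathbb{A}_0$ and take values in the Atiyah algebras, so this intertwining says precisely that the two resulting maps $\mathbb{A}_1/t^2\mathbb{A}_1\ominus\mathbb{A}_0/t^2\mathbb{A}_0 \to \mathcal{A}_{\gr\mathbb{A}_{01}^{(i)}}$ are related by $\Ad(\Psi)$, and the same then holds for the induced maps into $\widetilde{\Pi}_{\gr\mathbb{A}_{01}^{(i)}}$.

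Finally I would unwind the definition \eqref{diff Sigma to conn} of $\nabla$. Fix $(\phi_1,\phi_0) \in \Sigma_1(\mathbb{A}_1)\ominus\Sigma_1(\mathbb{A}_0)$. By construction $\nabla(\mathbb{A}_{01}^{(0)})(\phi_1,\phi_0)$ is the unique splitting of $\widetilde{\Pi}_{\gr\mathbb{A}_{01}^{(0)}}$ through which the derivation $\mathcal{O}_{X/\mathcal{P}} \xrightarrow{(\phi_1,\phi_0)} \mathbb{A}_1/t^2\mathbb{A}_1\ominus\mathbb{A}_0/t^2\mathbb{A}_0 \to \widetilde{\Pi}_{\gr\mathbb{A}_{01}^{(0)}}$ factors after $\mathcal{O}_{X/\mathcal{P}} \xrightarrow{d} \Pi$. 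By the intertwining of the previous paragraph this derivation equals $\Ad(\Psi)$ composed with the corresponding derivation valued in $\widetilde{\Pi}_{\gr\mathbb{A}_{01}^{(1)}}$, so $\Ad(\Psi)\circ\nabla(\mathbb{A}_{01}^{(1)})(\phi_1,\phi_0)$ factors the very same derivation through $d$. By the uniqueness of the factorization the two splittings coincide, i.e. $\nabla(\mathbb{A}_{01}^{(0)})(\phi_1,\phi_0) = \Ad(\Psi)\circ\nabla(\mathbb{A}_{01}^{(1)})(\phi_1,\phi_0) = \Conn_\Pi(\Ad(\Psi))(\nabla(\mathbb{A}_{01}^{(1)})(\phi_1,\phi_0))$, which is the asserted identity. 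The only nontrivial point is the intertwining $\rho_0 = \Ad(\gr\Psi)\circ\rho_1$, where I expect the main care to be needed in tracking the $\mathbb{C}[[t]]$-linearity and the two-sided action through the division by $t$; once it is in place, the conclusion follows formally from the uniqueness built into \eqref{O-ext to tors} and \eqref{diff Sigma to conn}.
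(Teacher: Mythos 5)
The paper leaves this proof to the reader, so there is no argument of the authors' to compare against; your proposal is correct and is evidently the intended one. The crux is exactly your intertwining identity $\gr\Psi\circ\rho_1(a,a^\prime)=\rho_0(a,a^\prime)\circ\gr\Psi$, which follows as you say from $\Psi(ab-ba^\prime)=a\Psi(b)-\Psi(b)a^\prime$ together with $\mathbb{C}[[t]]$-linearity (division by $t$ being legitimate because $t$ acts injectively on a bi-invertible module), and from it the conclusion does follow formally: $\Ad(\Psi)\circ\nabla(\mathbb{A}_{01}^{(1)})(\phi_1,\phi_0)$ is a splitting of $\widetilde{\Pi}_{\gr\mathbb{A}_{01}^{(0)}}$ factoring the same derivation through $d$, so the uniqueness built into \eqref{diff Sigma to conn} forces it to equal $\nabla(\mathbb{A}_{01}^{(0)})(\phi_1,\phi_0)$. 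You also correctly deciphered the paper's loose (diagrammatic) composition convention: read in standard order the displayed identity does not typecheck, and your reading --- $\Conn_\Pi(\Ad(\Psi))$ is post-composition of splittings with $\Ad(\Psi)$, applied after $\nabla(\mathbb{A}_{01}^{(1)})$ --- is the only sensible one, consistent with the analogous Lemma \ref{lemma: Sigma 1 map of bimodules}.
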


\begin{prop}\label{prop: inv sec to conn is gr dlog}
Suppose that $\mathbb{A}_{01}$ is a bi-invertible $\mathbb{A}_1\otimes_{\mathbb{C}[[t]]}\mathbb{A}_0^\op$-module.
\begin{enumerate}
\item The map $\nabla(\mathbb{A}_{01})$ is a morphism of $(\Omega^1_\Pi \xrightarrow{d_\Pi} \Omega^{2,cl}_\Pi)$-torsors.

\item The diagram
\[
\begin{CD}
(\gr\mathbb{A}_{01})^\times @>{d_\Pi\log}>> \CONN_\Pi(\widetilde{\Pi}_{\gr\mathbb{A}_{01}}) \\
@V{\Sigma_1(\mathbb{A}_{01})}VV @AA{\nabla(\mathbb{A}_{01})}A \\
\shHom_{(\Omega_\Pi^1 \xrightarrow{d_\Pi} \Omega_\Pi^{2,cl})[1]}(\Sigma_1(\mathbb{A}_0), \Sigma_1(\mathbb{A}_1)) @>>> \Sigma_1(\mathbb{A}_1)\ominus\Sigma_1(\mathbb{A}_0)
\end{CD}
\]
is commutative.

\end{enumerate}
\end{prop}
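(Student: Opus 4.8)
The plan is to verify both assertions by unwinding the definition of $\nabla_{\phi_1,\phi_0}$ on exact forms $df\in\Pi=\Omega^1_{X/\mathcal{P}}$ and then invoking $\mathcal{O}_{X/\mathcal{P}}$-linearity: since $\Pi$ is locally generated over $\mathcal{O}_{X/\mathcal{P}}$ by the $df$, and every object in sight (the splitting $\nabla_{\phi_1,\phi_0}$, its curvature as an element of $\Omega^{2,cl}_\Pi=\bigwedge^2\mathcal{T}_{X/\mathcal{P}}$, and the connection $d_\Pi\log(b)$) is $\mathcal{O}_{X/\mathcal{P}}$-linear in the relevant slots, it suffices to compare values on exact forms. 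Throughout I use that $\nabla_{\phi_1,\phi_0}(df)$ is the image in $\widetilde{\Pi}_{\gr\mathbb{A}_{01}}$ of $\phi_1(f)\ominus\phi_0(f)$ under the map of Proposition \ref{prop: baer to atiyah}, and that under the identification $\mathcal{O}_{X/\mathcal{P}}\cong t\mathbb{A}_i/t^2\mathbb{A}_i$ the central $\mathcal{O}_{X/\mathcal{P}}\subset\widetilde{\Pi}_{\gr\mathbb{A}_{01}}$ acts on $\gr\mathbb{A}_{01}$ through $f\mapsto f\cdot\id$.

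For part (1) I would check $\Omega^1_\Pi$-equivariance and compatibility of the distinguished trivializations separately. For equivariance, acting on the first factor by $\xi\in\Omega^1_\Pi=\mathcal{T}_{X/\mathcal{P}}$ replaces $\phi_1$ by $\phi_1+\xi=\widetilde{\phi_1}\circ R_\xi|_{\mathcal{O}_{X/\mathcal{P}}}$; a one-line expansion gives $(\phi_1+\xi)(f)-\phi_1(f)=t\,\phi_1(\iota_\xi df)\bmod t^2$, so in the Baer difference this shifts $\phi_1(f)\ominus\phi_0(f)$ by $\langle df,\xi\rangle\cdot\mathfrak{c}$, i.e. $\nabla_{\phi_1+\xi,\phi_0}=\nabla_{\phi_1,\phi_0}+\xi$. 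For the trivializations I compute the curvature $c(\nabla_{\phi_1,\phi_0})(df,dg)\cdot\id=[\nabla_{\phi_1,\phi_0}(df),\nabla_{\phi_1,\phi_0}(dg)]-\nabla_{\phi_1,\phi_0}([\,df,dg\,]_\pi)$. Using $[\,df,dg\,]_\pi=d\{f,g\}$, the fact that the map of Proposition \ref{prop: baer to atiyah} is a morphism of Lie algebras, and the defining identity $\widetilde{\{\phi_i(f),\phi_i(g)\}}=\phi_i(\{f,g\})+t\,c(\phi_i)(f,g)\bmod t^2$ of the subprincipal curvature, this collapses to $(c(\phi_1)-c(\phi_0))(f,g)\cdot\id$. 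Hence the curvature of $\nabla_{\phi_1,\phi_0}$ equals $c(\phi_1)-c(\phi_0)$, which is precisely the trivialization carried by $\Sigma_1(\mathbb{A}_1)\ominus\Sigma_1(\mathbb{A}_0)$, so $\nabla(\mathbb{A}_{01})$ is a morphism of $(\Omega^1_\Pi\xrightarrow{d_\Pi}\Omega^{2,cl}_\Pi)$-torsors.

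For part (2), given $b\in(\gr\mathbb{A}_{01})^\times$ I would choose a local lift $\tilde b\in\mathbb{A}_{01}^\times$; by Lemma \ref{lemma: bimod preserve curv} the morphism $\Sigma_1(\mathbb{A}_{01})(b)=\Sigma_1(\Phi_{\tilde b})$ is independent of the lift. Under the canonical identification $\shHom(\Sigma_1(\mathbb{A}_0),\Sigma_1(\mathbb{A}_1))\cong\Sigma_1(\mathbb{A}_1)\ominus\Sigma_1(\mathbb{A}_0)$ this morphism corresponds to $\phi_1\ominus\phi_0$ for any $\phi_0\in\Sigma_1(\mathbb{A}_0)$ and $\phi_1:=\Phi_{\tilde b}\circ\phi_0$. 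It then remains to identify $\nabla_{\phi_1,\phi_0}$ with $d_\Pi\log(b)$, the unique flat $\Pi$-connection for which $b$ is horizontal. Using the defining relation $\tilde b\,a=\Phi_{\tilde b}(a)\,\tilde b$ and $\phi_1\equiv\Phi_{\tilde b}\circ\phi_0\bmod t^2$, I compute $\phi_1(f)\tilde b-\tilde b\,\phi_0(f)=(\phi_1(f)-\Phi_{\tilde b}(\phi_0(f)))\tilde b\in t^2\mathbb{A}_{01}$, whence the operator $\nabla_{\phi_1,\phi_0}(df)$ annihilates $b$ for every $f$; by $\mathcal{O}_{X/\mathcal{P}}$-linearity $\nabla_{\phi_1,\phi_0}$ annihilates $b$, which characterizes $d_\Pi\log(b)$. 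Consistently, $c(\phi_1)=c(\phi_0)$ by Lemma \ref{lemma: subprincipal curvature natural}, so the curvature computed in part (1) vanishes and the connection is indeed flat.

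The substantive input is minimal: for part (1) it is exactly the definition of the subprincipal curvature and the Lie-algebra-morphism property of Proposition \ref{prop: baer to atiyah}; for part (2) it is the intertwining relation $\tilde b\,a=\Phi_{\tilde b}(a)\tilde b$ together with the mod-$t^2$ vanishing. I expect the main obstacle to be purely organizational, namely fixing conventions so that the two trivialization maps and the two torsor actions are compared with matching signs: the pushout presentation of the Baer difference, the anti-diagonal identification of the two copies of $t\mathbb{A}_i/t^2\mathbb{A}_i$, and the passage from exact forms to all of $\Pi$ are where the care is needed.
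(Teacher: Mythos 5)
Your proposal is correct, and the two parts relate to the paper's proof differently. For part (1) you follow essentially the paper's route: the paper simply observes that by $\mathcal{O}_{X/\mathcal{P}}$-bilinearity of the curvature it suffices to check $c(\nabla_{\phi_1,\phi_0})=c(\phi_1)-c(\phi_0)$ on exact forms, and that this follows from Proposition \ref{prop: baer to atiyah}; you fill in exactly that computation (via $[df,dg]_\pi=d\{f,g\}$, the Lie-algebra-morphism property, and the defining identity of the subprincipal curvature), and in addition you spell out the $\Omega^1_\Pi$-equivariance, which the paper leaves implicit. For part (2) your route is genuinely different in execution. The paper characterizes $d_\Pi\log$ by its compatibility with the trivialization $\tau_b$, and then uses the naturality lemmas (Lemma \ref{lemma: Sigma 1 map of bimodules} and Lemma \ref{lemma: nabla map of bimodules}, together with functoriality of $\Sigma_1$) to reduce to the diagonal case $\mathbb{A}_0=\mathbb{A}_1=\mathbb{A}$, $\mathbb{A}_{01}=\mathbb{A}$, $b=1$, where all three maps in the diagram become explicit ($\Sigma_1(\mathbb{A})=d_\Pi\log$, $\nabla(\mathbb{A})$ the inclusion $\Omega^{1,cl}_\Pi\hookrightarrow\Omega^1_\Pi$). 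You instead work with a general $b$, lift it to $\tilde b\in\mathbb{A}_{01}^\times$, and verify directly from the intertwining relation $\tilde b\,a=\Phi_{\tilde b}(a)\,\tilde b$ that $\nabla_{\phi_1,\phi_0}(df)$ kills $b$ when $\phi_1=\Phi_{\tilde b}\circ\phi_0$ mod $t^2$; since a connection on a line bundle annihilating an invertible section is unique and equals $d_\Pi\log(b)$, this closes the argument. What each buys: the paper's reduction is modular and makes the diagonal case transparent, but it leans on two lemmas whose proofs are themselves left to the reader; your direct computation is self-contained (it only needs Lemma \ref{lemma: bimod preserve curv} for independence of the lift) at the cost of carrying the general bimodule notation through the calculation. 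Both hinge on the same underlying mechanism, and your bookkeeping of the Baer-difference identifications is consistent with the paper's conventions.
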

\begin{proof}
{~}
\begin{enumerate}
\item Since the curvature is $\mathcal{O}_{X/\mathcal{P}}$-bilinear it is sufficient to check that $c(\nabla_{\phi_1,\phi_0}) = c(\phi_1) - c(\phi_0)$ on exact forms. The latter identity follows from Proposition \ref{prop: baer to atiyah}.

\item Recall (\eqref{dlog L}, see \ref{subsubsection: invertible B-modules}) that the upper horizontal map $d_\Pi\log$
is characterized as follows. A section $b\in (\gr\mathbb{A}_{01})^\times$ establishes a morphism of $\mathcal{O}_{X/\mathcal{P}}^\times$-torsors $\tau_b \colon\mathcal{O}_{X/\mathcal{P}}^\times \to (\gr\mathbb{A}_{01})^\times$ which induces the morphism of 
$\Omega^1_\Pi$-torsors $\Omega^1_\Pi \to \CONN_\Pi(\widetilde{\Pi}_{\gr\mathbb{A}_{01}})$ given by $\alpha \mapsto \nabla_b +\alpha$, where $\nabla_b$ is the connection induced on $(\gr\mathbb{A}_{01})^\times$ by the canonical flat connection on $\mathcal{O}_{X/\mathcal{P}}^\times$. The map $d_\Pi\log$ is the unique map making the diagram
\[
\begin{CD}
\mathcal{O}_{X/\mathcal{P}}^\times @>{d_\Pi\log}>> \Omega^1_\Pi \\
@V{\tau_b}VV @VV{\alpha \mapsto \nabla_b +\alpha}V \\
(\gr\mathbb{A}_{01})^\times @>{d_\Pi\log}>> \CONN_\Pi(\widetilde{\Pi}_{\gr\mathbb{A}_{01}})
\end{CD}
\]
commutative. Therefore, it suffices to show that the composition $\nabla(\mathbb{A}_{01}) \circ \Sigma_1(\mathbb{A}_{01})$ has the same property.

A section $b\in\mathbb{A}_{01}^\times$ determines the isomorphism $\Phi_b \colon \mathbb{A}_0 \to \mathbb{A}_1$  and a compatible
isomorphism of $\mathbb{A}_{01}$ with $\mathbb{A}_1$ viewed as $\mathbb{A}_1\otimes_{\mathbb{C}[[t]]}\mathbb{A}_1^\op$-module.
Functoriality of $\Sigma_1$, Lemma \ref{lemma: Sigma 1 map of bimodules} and Lemma \ref{lemma: nabla map of bimodules} show that it is sufficient to prove the statement in the case when $\mathbb{A}_0 = \mathbb{A}_1= \mathbb{A}$, and $\mathbb{A}_{01}= \mathbb{A}$ is the ``diagonal" $\mathbb{A}$-bi-module corresponding to $b = 1 \in \mathbb{A}$.

Suppose that $\mathbb{A}_0 = \mathbb{A}_1 = \mathbb{A}$ and $\mathbb{A}_{01} = \mathbb{A}$. In this case $\shHom_{(\Omega_\Pi^1 \xrightarrow{d_\Pi} \Omega_\Pi^{2,cl})[1]}(\Sigma_1(\mathbb{A}), \Sigma_1(\mathbb{A})) = \Omega^{1,cl}_\Pi$, $\Sigma_1(\mathbb{A}) = d_\Pi\log \colon \mathcal{O}_{X/\mathcal{P}}^\times \to \Omega^{1,cl}_\Pi$ while the map $\nabla(\mathbb{A}) \colon \Omega^{1,cl}_\Pi \hookrightarrow \Omega^1_\Pi$ is the inclusion, which proves the claim.
\end{enumerate}
\end{proof}

Suppose that $\mathbb{A}_i$, $i=0,1,2$, is a DQ-algebra and $\mathbb{A}_{i,i+1}$, $i=0,1$ is a bi-invertible $\mathbb{A}_{i+1}\otimes_{\mathbb{C}[[t]]}\mathbb{A}_i^\op$-module. Thus, all DQ-algebras $\mathbb{A}_i$ give rise to the same associated Poisson bi-vector $\poisson$, hence the same Lie algebroid $\Pi$.

Let $\mathbb{A}_{02} := \mathbb{A}_{01}\otimes_{\mathbb{A}_1}\mathbb{A}_{12}$. Then, $\mathbb{A}_{02}$ is a bi-invertible $\mathbb{A}_2\otimes_{\mathbb{C}[[t]]}\mathbb{A}_0^\op$-module with $\gr\mathbb{A}_{02} = \gr\mathbb{A}_{01}\otimes_{\mathcal{O}_{X/\mathcal{P}}}\gr\mathbb{A}_{12}$.

The bi-linear paring $\gr\mathbb{A}_{01}\times \gr\mathbb{A}_{12} \to \gr\mathbb{A}_{02}$ gives rise to the map  $(\gr\mathbb{A}_{01})^\times \times (\gr\mathbb{A}_{12})^\times \to (\gr\mathbb{A}_{02})^\times$.

\begin{prop}\label{prop: sigma one composition}
The diagram
\[
\begin{CD}
(\gr\mathbb{A}_{01})^\times \times (\gr\mathbb{A}_{12})^\times @>{\Sigma_1(\mathbb{A}_{01})\times\Sigma_1(\mathbb{A}_{12})}>> \shHom(\Sigma_1(\mathbb{A}_0),\Sigma_1(\mathbb{A}_1))\times \shHom(\Sigma_1(\mathbb{A}_1),\Sigma_1(\mathbb{A}_2)) \\
@V{\otimes}VV @VV{\circ}V \\
(\gr\mathbb{A}_{02})^\times @>{\Sigma_1(\mathbb{A}_{02})}>> \shHom(\Sigma_1(\mathbb{A}_0),\Sigma_1(\mathbb{A}_2))
\end{CD}
\]
is commutative.
\end{prop}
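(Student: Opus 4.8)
The plan is to reduce the assertion to the functoriality of the functor $\Sigma_1$ on the category of DQ-algebras, by showing that the tensor product of bi-module sections induces the composition of the associated algebra isomorphisms. Both paths around the square are morphisms of torsors (by Lemma~\ref{lemma: bimod preserve curv}) and each map is built by reduction modulo $t$ from a construction on $\mathbb{A}_{ij}^\times$; consequently the question is local and it suffices to check the identity
\[
\Sigma_1(\mathbb{A}_{02})(b\otimes c) = \Sigma_1(\mathbb{A}_{12})(c)\circ\Sigma_1(\mathbb{A}_{01})(b)
\]
on local sections $b\in(\gr\mathbb{A}_{01})^\times$ and $c\in(\gr\mathbb{A}_{12})^\times$.

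First I would lift. Working on a sufficiently small open set, where $\gr\colon\mathbb{A}_{01}^\times \to (\gr\mathbb{A}_{01})^\times$ and $\gr\colon\mathbb{A}_{12}^\times \to (\gr\mathbb{A}_{12})^\times$ are surjective on sections, I choose $\widetilde{b}\in\mathbb{A}_{01}^\times$ and $\widetilde{c}\in\mathbb{A}_{12}^\times$ with $\gr(\widetilde{b})=b$ and $\gr(\widetilde{c})=c$. By the very construction of $\Sigma_1(\mathbb{A}_{01})$ and $\Sigma_1(\mathbb{A}_{12})$ via \eqref{inv bimod to hom alg} and Lemma~\ref{lemma: bimod preserve curv}, one has $\Sigma_1(\mathbb{A}_{01})(b)=\Sigma_1(\Phi_{\widetilde{b}})$ and $\Sigma_1(\mathbb{A}_{12})(c)=\Sigma_1(\Phi_{\widetilde{c}})$, and these values are independent of the chosen lifts.

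The key step is to identify the composite on the level of algebra morphisms, namely to prove that $\widetilde{b}\otimes\widetilde{c}$ is a section of $\mathbb{A}_{02}^\times$ lifting $b\otimes c$ and that
\[
\Phi_{\widetilde{b}\otimes\widetilde{c}} = \Phi_{\widetilde{c}}\circ\Phi_{\widetilde{b}} .
\]
This is a direct bi-module computation from the defining relations $\Phi_{\widetilde{b}}(a)\widetilde{b}=\widetilde{b}a$ for $a\in\mathbb{A}_0$ and $\Phi_{\widetilde{c}}(a')\widetilde{c}=\widetilde{c}a'$ for $a'\in\mathbb{A}_1$: acting by $a\in\mathbb{A}_0$ on $\widetilde{b}\otimes\widetilde{c}$ produces an element of $\mathbb{A}_1$ which I then transport across the balanced tensor product over $\mathbb{A}_1$, obtaining $(\Phi_{\widetilde{c}}\circ\Phi_{\widetilde{b}})(a)\cdot(\widetilde{b}\otimes\widetilde{c})$, precisely the defining relation for $\Phi_{\widetilde{b}\otimes\widetilde{c}}$. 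Invertibility of $\widetilde{b}\otimes\widetilde{c}$ is then automatic, since $\Phi_{\widetilde{c}}\circ\Phi_{\widetilde{b}}$ is an isomorphism of DQ-algebras.

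Finally I would invoke functoriality of $\Sigma_1$, which gives $\Sigma_1(\Phi_{\widetilde{c}}\circ\Phi_{\widetilde{b}})=\Sigma_1(\Phi_{\widetilde{c}})\circ\Sigma_1(\Phi_{\widetilde{b}})$; combined with the previous step this yields
\[
\Sigma_1(\mathbb{A}_{02})(b\otimes c) = \Sigma_1(\Phi_{\widetilde{b}\otimes\widetilde{c}}) = \Sigma_1(\Phi_{\widetilde{c}})\circ\Sigma_1(\Phi_{\widetilde{b}}) = \Sigma_1(\mathbb{A}_{12})(c)\circ\Sigma_1(\mathbb{A}_{01})(b),
\]
which is the asserted commutativity. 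I expect the main obstacle to be purely bookkeeping: one must set up the left/right bi-module conventions so that $\widetilde{b}\otimes\widetilde{c}$ genuinely lifts $b\otimes c$ under $\gr\mathbb{A}_{02}=\gr\mathbb{A}_{01}\otimes_{\mathcal{O}_{X/\mathcal{P}}}\gr\mathbb{A}_{12}$ and induces the composite in the correct order across the middle $\otimes_{\mathbb{A}_1}$; once this is arranged, the remainder of the argument is formal.
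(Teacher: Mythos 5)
Your proposal is correct and takes essentially the same route as the paper: the paper's proof consists precisely of reducing to the commutativity of the corresponding square with $\shHom_{\DQA}(\mathbb{A}_i,\mathbb{A}_j)$ in the right-hand column (i.e.\ the identity $\Phi_{\widetilde{b}\otimes\widetilde{c}} = \Phi_{\widetilde{c}}\circ\Phi_{\widetilde{b}}$, which it leaves to the reader) and then invoking functoriality of $\Sigma_1$. Your write-up simply fills in that bi-module computation and the lifting/factorization bookkeeping through $(\gr\mathbb{A}_{ij})^\times$ explicitly, so it is a fleshed-out version of the paper's own argument.
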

\begin{proof}
The commutativity of the diagram
\[
\begin{CD}
(\gr\mathbb{A}_{01})^\times \times (\gr\mathbb{A}_{12})^\times @>>> \shHom_{\DQA}(\mathbb{A}_0, \mathbb{A}_1)\times\shHom_{\DQA}(\mathbb{A}_1, \mathbb{A}_2) \\
@V{\otimes}VV @VV{\circ}V \\
(\gr\mathbb{A}_{02})^\times @>>> \shHom_{\DQA}(\mathbb{A}_0, \mathbb{A}_2)
\end{CD}
\]
with the horizontal maps as in \eqref{inv bimod to hom alg} is left to the reader. The rest follows from functorial properties of $\Sigma_1$.
\end{proof}

\section{DQ-algebroids}\label{section: qcl}

\subsection{DQ-algebroids}
Recall the definition of DQ-algebroids from \cite{KS}.

\begin{definition}[\cite{KS}, Definition 2.3.1]
A \emph{DQ-algebroid} $\mathcal{C}$ is a $\mathbb{C}[[t]]$-algebroid such that for each open set $U\subseteq X$ with $\mathcal{C}(U) \neq \varnothing$ and any $L\in \mathcal{C}(U)$ the $\mathbb{C}[[t]]$-algebra $\shEnd_\mathcal{C}(L)$ is a DQ-algebra on $U$.
\end{definition}

In other words, a DQ-algebroid is a $\mathbb{C}[[t]]$-algebroid locally equivalent to a star-product.

\begin{lemma}\label{lemma: hom bi-invertible}
Suppose that $\mathcal{C}$ is a DQ-algebroid, $U\subseteq X$ is an open subset such that $\mathcal{C}(U) \neq \varnothing$. For $L,L^\prime \in \mathcal{C}(U)$ the $\shEnd_\mathcal{C}(L^\prime)\otimes\shEnd_\mathcal{C}(L)^\op$-module $\shHom_\mathcal{C}(L,L^\prime)$ is bi-invertible. Moreover, $\shHom_\mathcal{C}(L,L^\prime)^\times$ coincides with the sub-sheaf of isomorphisms, i.e. $\shHom_{i\mathcal{C}}(L,L^\prime) = \shHom_\mathcal{C}(L,L^\prime)^\times$.
\end{lemma}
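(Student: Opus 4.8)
The plan is to exploit the gerbe property of $i\mathcal{C}$ together with the local structure of DQ-algebras. Recall first that $\shHom_\mathcal{C}(L,L')$ carries a left $\shEnd_\mathcal{C}(L')$-module structure by post-composition and a right $\shEnd_\mathcal{C}(L)$-module structure by pre-composition, hence is a $\shEnd_\mathcal{C}(L')\otimes\shEnd_\mathcal{C}(L)^\op$-module; in the notation of the definition of bi-invertibility the actions of $a'\otimes 1$ and $1\otimes a$ on $h$ are $a'\circ h$ and $h\circ a$ respectively. Since $\mathcal{C}$ is a $\mathbb{C}[[t]]$-algebroid, the substack $i\mathcal{C}$ is a gerbe, so $L$ and $L'$ are locally isomorphic; I would choose locally an isomorphism $b\colon L\to L'$ in $\mathcal{C}$. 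For such $b$ the map $a'\mapsto a'\circ b$ has inverse $h\mapsto h\circ b^{-1}$ and the map $a\mapsto b\circ a$ has inverse $h\mapsto b^{-1}\circ h$, so both are module isomorphisms. This exhibits $b$ as a local witness to bi-invertibility, and at the same time proves the inclusion $\shHom_{i\mathcal{C}}(L,L')\subseteq\shHom_\mathcal{C}(L,L')^\times$: every isomorphism is a bi-invertible section.

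The substance lies in the reverse inclusion $\shHom_\mathcal{C}(L,L')^\times\subseteq\shHom_{i\mathcal{C}}(L,L')$. Given a local section $b\in\shHom_\mathcal{C}(L,L')^\times$, I would again choose, after shrinking the open set, an isomorphism $g\colon L\to L'$ and set $a:=g^{-1}\circ b\in\shEnd_\mathcal{C}(L)$. The map $c\mapsto g\circ c$ is an isomorphism $\shEnd_\mathcal{C}(L)\to\shHom_\mathcal{C}(L,L')$ because $g$ is invertible, while $c\mapsto b\circ c=g\circ(a\circ c)$ is an isomorphism by the hypothesis $b\in\shHom_\mathcal{C}(L,L')^\times$. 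Composing the first with the inverse of the second shows that left multiplication $c\mapsto a\circ c$ on $\shEnd_\mathcal{C}(L)$ is an isomorphism, in particular surjective.

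The key point is then to pass to the classical limit. Reducing modulo $t$, the surjection $c\mapsto a\circ c$ descends to the $\mathcal{O}_{X/\mathcal{P}}$-linear endomorphism of $\shEnd_\mathcal{C}(L)/t\cdot\shEnd_\mathcal{C}(L)\cong\mathcal{O}_{X/\mathcal{P}}$ given by multiplication by $\sigma(a)$; surjectivity forces $\sigma(a)$ to be nowhere vanishing, hence invertible. Because $\shEnd_\mathcal{C}(L)$ is a DQ-algebra, and therefore $t$-adically complete, an element whose reduction $\sigma(a)$ is invertible is itself a unit. Consequently $b=g\circ a$ is a composition of two isomorphisms in $\mathcal{C}$ and is thus an isomorphism, i.e. $b\in\shHom_{i\mathcal{C}}(L,L')$, completing the equality of the two subsheaves.

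I expect the only genuine obstacle to be this last passage: the assertion that a bi-invertible section is an honest isomorphism is precisely where the DQ hypothesis, as opposed to that of a general $\mathbb{C}[[t]]$-algebroid, enters, through $t$-adic completeness and the lifting of units from $\mathcal{O}_{X/\mathcal{P}}$ to $\shEnd_\mathcal{C}(L)$. Everything else, namely the compatibility of the displayed maps with the respective module structures and the independence of the constructions from the auxiliary choice of $g$, is routine and I would leave it to a short verification.
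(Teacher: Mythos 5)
Your argument is correct, and since the paper disposes of this lemma with ``Left to the reader,'' there is no official proof to measure it against; your structure (gerbe property of $i\mathcal{C}$ gives the local witness for bi-invertibility and the easy inclusion $\shHom_{i\mathcal{C}}(L,L')\subseteq\shHom_\mathcal{C}(L,L')^\times$; the reverse inclusion reduces, via a local isomorphism $g$, to showing $a:=g^{-1}\circ b$ is a unit in $\shEnd_\mathcal{C}(L)$) is exactly the natural one. One correction to your closing assessment, though: the passage to the classical limit and $t$-adic completeness, while valid, is not where the content lies, and the DQ hypothesis is not actually needed for the second assertion. Once you know that left multiplication $\lambda_a\colon c\mapsto a\circ c$ is an isomorphism of sheaves, it is in particular bijective on sections over the open set at hand; surjectivity produces $c$ with $a c=1$, and then $a(ca-1)=(ac)a-a=0$ combined with injectivity gives $ca=1$, so $a$ is a unit --- a two-line argument valid in any sheaf of unital rings. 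Hence the equality $\shHom_{i\mathcal{C}}(L,L')=\shHom_\mathcal{C}(L,L')^\times$ holds for an arbitrary $\mathbb{C}[[t]]$-algebroid; the only step of your write-up that is genuinely DQ-specific is the identification $\shEnd_\mathcal{C}(L)/t\shEnd_\mathcal{C}(L)\cong\mathcal{O}_{X/\mathcal{P}}$, which this shortcut never uses. So the step you flag as ``the only genuine obstacle'' is in fact no obstacle at all; your detour through $\sigma(a)$, nowhere-vanishing functions and lifting of units is a correct but strictly longer route to the same point.
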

\begin{proof}
Left to the reader.
\end{proof}

For a DQ-algebroid $\mathcal{C}$ we denote by $\mathcal{C}/t\cdot\mathcal{C}$ the separated prestack with the same objects as $\mathcal{C}$ and $\Hom_{\mathcal{C}/t\cdot\mathcal{C}}(L_1,L_2) = \Gamma(U;\gr\shHom_\mathcal{C}(L_1,L_2))$ for $L_1,L_2\in\mathcal{C}(U)$ and by  $\gr\mathcal{C}$ the associated stack. We denote by $\gr\colon \mathcal{C} \to \gr\mathcal{C}$ the ``principal symbol" functor.

For $U\subseteq X$ let $\DQ_{X/\mathcal{P}}(U)$ denote the 2-category of DQ-algebroids.The assignment $U \mapsto \DQ_{X/\mathcal{P}}(U)$ extends to a 2-stack which we denote $\DQ_{X/\mathcal{P}}$.
%

The assignment $\mathcal{C} \mapsto \gr^\times\mathcal{C} := i\gr\mathcal{C}$ gives rise to the morphism of 2-stacks
\begin{equation}\label{gr DQ}
\gr^\times\colon \DQ_{X/\mathcal{P}} \to \mathcal{O}^\times_{X/\mathcal{P}}[2] .
\end{equation}

\subsection{The associated Poisson structure}\label{subsection: The associated Poisson structure}
The following proposition is an immediate consequence of Lemma \ref{lemma: hom bi-invertible} and Lemma \ref{lemma: bi-inv same poisson}.
\begin{prop}
There exists a unique Poisson bracket
\begin{equation*}
\{.,.\}^\mathcal{C} \colon \mathcal{O}_{X/\mathcal{P}}\otimes\mathcal{O}_{X/\mathcal{P}} \to \mathcal{O}_{X/\mathcal{P}}
\end{equation*}
such that for any $U\subset X$ with $\mathcal{C}(U)\neq\varnothing$ and any $L\in\mathcal{C}(U)$ the restriction of $\{.,.\}^\mathcal{C}$ to $U$ coincides with the Poisson bracket associated to the DQ-algebra $\shEnd_\mathcal{C}(L)$.
\end{prop}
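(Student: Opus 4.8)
The plan is to construct $\{.,.\}^\mathcal{C}$ by gluing the locally defined Poisson brackets coming from the endomorphism DQ-algebras and to verify independence of all choices via the bi-invertibility supplied by Lemma \ref{lemma: hom bi-invertible}.

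First I would fix a cover $\{U_\alpha\}$ of $X$ by open subsets on which $\mathcal{C}$ admits an object; since $i\mathcal{C}$ is a gerbe, such a cover exists, and on the open subset where $\mathcal{C}(U)=\varnothing$ there is nothing to define because the stalk of $\mathcal{O}_{X/\mathcal{P}}$ there is covered by the other $U_\alpha$. For each $\alpha$ choose $L_\alpha\in\mathcal{C}(U_\alpha)$ and set $\mathbb{A}_\alpha:=\shEnd_\mathcal{C}(L_\alpha)$, a DQ-algebra on $U_\alpha$ by the definition of a DQ-algebroid. Each $\mathbb{A}_\alpha$ carries its associated Poisson bracket $\{.,.\}_\alpha$ on $\mathcal{O}_{X/\mathcal{P}}\vert_{U_\alpha}$ (the construction of the associated Poisson structure for a DQ-algebra). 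The content of the statement is that these glue to a global bracket. The main point is therefore to show that on overlaps $U_\alpha\cap U_\beta$ one has $\{.,.\}_\alpha=\{.,.\}_\beta$, and more generally that the bracket is independent of the chosen object $L\in\mathcal{C}(U)$.

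The key step, and the place where the algebroid structure does real work, is the following: for $L,L'\in\mathcal{C}(U)$ the module $\shHom_\mathcal{C}(L,L')$ is a bi-invertible $\shEnd_\mathcal{C}(L')\otimes\shEnd_\mathcal{C}(L)^\op$-module by Lemma \ref{lemma: hom bi-invertible}. By Lemma \ref{lemma: bi-inv same poisson}, the existence of a bi-invertible bi-module between two DQ-algebras forces their associated Poisson bi-vectors to coincide. Hence the Poisson brackets attached to $\shEnd_\mathcal{C}(L)$ and $\shEnd_\mathcal{C}(L')$ are equal on $U$. Applying this to $L=L_\alpha\vert_{U_\alpha\cap U_\beta}$ and $L'=L_\beta\vert_{U_\alpha\cap U_\beta}$ (both objects of $\mathcal{C}(U_\alpha\cap U_\beta)$, which is nonempty since it contains the restrictions) yields $\{.,.\}_\alpha=\{.,.\}_\beta$ on $U_\alpha\cap U_\beta$. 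This is precisely the cocycle condition needed for the locally defined brackets to glue into a single bracket $\{.,.\}^\mathcal{C}$, because $\mathcal{O}_{X/\mathcal{P}}$ is a sheaf and the $P_i$ are bi-differential operators whose symbols agree on overlaps.

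Finally I would record that the glued bracket is a Poisson bracket: the Jacobi identity and the Leibniz (bi-derivation) property are local conditions, verified on each $U_\alpha$ because $\{.,.\}_\alpha$ is genuinely a Poisson bracket there. Uniqueness is immediate: any $\{.,.\}^\mathcal{C}$ satisfying the stated compatibility must restrict to $\{.,.\}_\alpha$ on each $U_\alpha$, and a section of a sheaf is determined by its restrictions to a cover. I do not expect a serious obstacle here; the only subtlety worth stating carefully is that the compatibility of $\{.,.\}_\alpha$ and $\{.,.\}_\beta$ genuinely requires bi-invertibility (not merely local isomorphism of the $\mathbb{A}_\alpha$), which is exactly what Lemma \ref{lemma: hom bi-invertible} guarantees in the algebroid setting where there may be no global object. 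Thus the proof is the short gluing argument indicated, and this is precisely why the author calls the proposition an immediate consequence of Lemma \ref{lemma: hom bi-invertible} and Lemma \ref{lemma: bi-inv same poisson}.
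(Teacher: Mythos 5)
Your proposal is correct and is exactly the argument the paper intends: the paper simply declares the proposition an immediate consequence of Lemma \ref{lemma: hom bi-invertible} and Lemma \ref{lemma: bi-inv same poisson}, and your write-up is the routine expansion of that (agreement of the locally defined brackets via bi-invertibility of $\shHom_\mathcal{C}(L,L')$, then gluing and uniqueness by the sheaf property). The only quibble is your closing remark that bi-invertibility is ``genuinely required'' rather than mere local isomorphism: since $i\mathcal{C}$ is a gerbe, any two objects of $\mathcal{C}(U)$ are locally isomorphic, so their endomorphism algebras are locally isomorphic DQ-algebras and Lemma \ref{lemma: isom dq same poisson} alone would already give the agreement of brackets (indeed, the paper's proof of Lemma \ref{lemma: bi-inv same poisson} reduces to exactly this).
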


\begin{notation}
We denote by $\poisson^\mathcal{C} \in \Gamma(X;\bigwedge^2\mathcal{T}_{X/\mathcal{P}})$ the Poisson bi-vector which corresponds to $\{.,.\}^\mathcal{C}$. The Poisson bi-vector $\poisson^\mathcal{C}$ gives rise to a structure of a Lie algebroid on $\Omega^1_{X/\mathcal{P}}$ as described in \ref{subsubsection: Poisson structures}. We denote this Lie algebroid by $\Pi^\mathcal{C}$. \qed
\end{notation}

The assignment $\mathcal{C} \mapsto \poisson^\mathcal{C}$ give rise to the canonical morphism
\begin{equation}\label{map: assoc poisson}
\DQ_{X/\mathcal{P}} \to \Lambda^2\mathcal{T}_{X/\mathcal{P}}
\end{equation}
For $\pi\in\Gamma(X;\Lambda^2\mathcal{T}_{X/\mathcal{P}})$ we denote by $\DQ^\pi_{X/\mathcal{P}}$ the fiber of \eqref{map: assoc poisson} over $\pi$.

\subsection{The associated $\Pi^\mathcal{C}$-connective structure}
Suppose that $\mathcal{C}$ is a DQ-algebroid.

For $U\subseteq X$ an open subset such that $\mathcal{C}(U) \neq \varnothing$, $L,L^\prime \in \mathcal{C}(U)$ the $\shEnd_\mathcal{C}(L^\prime)\otimes\shEnd_\mathcal{C}(L)^\op$-module $\shHom_\mathcal{C}(L,L^\prime)$ is bi-invertible by Lemma \ref{lemma: hom bi-invertible}. An isomorphism $\phi\colon L \to L^\prime$ induces the morphism of DQ-algebras $\Ad(\phi) \colon \shEnd_\mathcal{C}(L) \to \shEnd_\mathcal{C}(L^\prime)$ defined by $\psi \mapsto \phi\circ\psi\circ\phi^{-1}$.

The proof of the following lemma is left to the reader.
\begin{lemma}\label{lemma: ad is inv bimod to hom alg}
The map $\Ad\colon \shHom_\mathcal{C}(L,L^\prime)^\times \to \shHom_{\DQA}(\shEnd_\mathcal{C}(L),\shEnd_\mathcal{C}(L^\prime))$ coincides with the map \eqref{inv bimod to hom alg}.
\end{lemma}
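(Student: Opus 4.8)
The plan is to unwind both maps into explicit formulas in terms of composition in the algebroid $\mathcal{C}$ and observe that they coincide on the nose; there is no genuine obstacle here, only bookkeeping of the module structures and of the opposite-algebra convention.

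First I would make the bi-invertible module structure explicit. Writing $\mathbb{A}_0 := \shEnd_\mathcal{C}(L)$, $\mathbb{A}_1 := \shEnd_\mathcal{C}(L^\prime)$ and $\mathbb{A}_{01} := \shHom_\mathcal{C}(L,L^\prime)$, the left $\mathbb{A}_1$-action on $\mathbb{A}_{01}$ is post-composition and the right $\mathbb{A}_0$-action is pre-composition, so that as a left $\mathbb{A}_1\otimes_{\mathbb{C}[[t]]}\mathbb{A}_0^\op$-module one has $(a_1\otimes a_0)\cdot b = a_1\circ b\circ a_0$; in particular $(a_1\otimes 1)\cdot b = a_1\circ b$ and $(1\otimes a_0)\cdot b = b\circ a_0$. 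By Lemma \ref{lemma: hom bi-invertible} we have $\shHom_\mathcal{C}(L,L^\prime)^\times = \shHom_{i\mathcal{C}}(L,L^\prime)$, so a section $b$ of $\mathbb{A}_{01}^\times$ is precisely an isomorphism $\phi\colon L\to L^\prime$ in $\mathcal{C}$ with two-sided inverse $\phi^{-1}\colon L^\prime\to L$.

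With these identifications the defining equation $(\Phi_b(a)\otimes 1)\cdot b = (1\otimes a)\cdot b$ of the map \eqref{inv bimod to hom alg} becomes $\Phi_b(a)\circ b = b\circ a$ for $a\in\mathbb{A}_0$, and invertibility of $b=\phi$ yields the closed formula $\Phi_b(a) = \phi\circ a\circ\phi^{-1}$. Since $\Ad(\phi)$ is by definition $\psi\mapsto\phi\circ\psi\circ\phi^{-1}$, comparing the two expressions gives $\Phi_\phi = \Ad(\phi)$, which is exactly the claim. The only point requiring care is the correct matching of the left/right $\shEnd$-actions with post/pre-composition and of the $\mathbb{A}_0^\op$ convention; once these are fixed as above the two maps are literally identical, so the verification is routine and I would leave the detailed check to the reader, in keeping with the surrounding lemmas.
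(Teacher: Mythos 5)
Your proposal is correct, and it is precisely the routine verification the paper intends: the paper offers no proof at all (``left to the reader''), and your unwinding of the bimodule structure via post/pre-composition, the identification $\shHom_\mathcal{C}(L,L')^\times = \shHom_{i\mathcal{C}}(L,L')$ from Lemma \ref{lemma: hom bi-invertible}, and the resulting identity $\Phi_\phi(a)=\phi\circ a\circ\phi^{-1}=\Ad(\phi)(a)$ is exactly that check, with the $\mathbb{A}_0^{\op}$ convention handled correctly.
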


For $U\subseteq X$ an open subset such that $\mathcal{C}(U) \neq \varnothing$, $L \in \mathcal{C}(U)$ the $\mathbb{C}[[t]]$-algebra $\shEnd_\mathcal{C}(L)$ is a DQ-algebra and we set
\[
\Conn^\mathcal{C}(L) := \Sigma_1(\shEnd_\mathcal{C}(L)) \in (\Omega^1_{\Pi^\mathcal{C}} \xrightarrow{d_\Pi} \Omega^{2,cl}_{\Pi^\mathcal{C}})[1] .
\]

For $U\subseteq X$ an open subset such that $\mathcal{C}(U) \neq \varnothing$, $L,L^\prime \in \mathcal{C}(U)$, let
\begin{multline*}
\Conn^\mathcal{C}(L,L^\prime) := \Sigma_1(\shHom_\mathcal{C}(L,L^\prime)) \colon \shHom_{\mathcal{O}^\times_{X/\mathcal{P}}[1]}(\gr(L)^\times,\gr(L^\prime)^\times) \\
= (\gr\shHom_\mathcal{C}(L,L^\prime))^\times \to \shHom_{(\Omega^1_{\Pi^\mathcal{C}} \xrightarrow{d_\Pi} \Omega^{2,cl}_{\Pi^\mathcal{C}})[1]}(\Conn^\mathcal{C}(L), \Conn^\mathcal{C}(L^\prime)) .
\end{multline*}

By Proposition \ref{prop: sigma one composition}  the above assignments define a functor, denoted
\[
\Conn^\mathcal{C} \colon i(\mathcal{C}/t\mathcal{C}) \to (\Omega^1_{\Pi^\mathcal{C}} \xrightarrow{d_\Pi} \Omega^{2,cl}_{\Pi^\mathcal{C}})[1] .
\]
As the target is a stack, this functor induces the functor
\[
\Conn^\mathcal{C} \colon i\gr\mathcal{C} \to (\Omega^1_{\Pi^\mathcal{C}} \xrightarrow{d_\Pi} \Omega^{2,cl}_{\Pi^\mathcal{C}})[1] ,
\]
i.e. a $\Pi^\mathcal{C}$-connective structure with flat curving on $i\gr\mathcal{C}$.

The assignment $\mathcal{C} \mapsto (\gr\mathcal{C}, \Conn^\mathcal{C})$ defines the morphism of 2-stacks
\begin{equation}\label{morphism: qcl}
\widetilde{\gr^\times}\colon \DQ^\pi_{X/\mathcal{P}} \to (\mathcal{O}^\times_{X/\mathcal{P}} \to \Omega^1_\Pi \to \Omega^{2,cl}_\Pi)[2]
\end{equation}
(see \ref{subsection: The associated Poisson structure} and \ref{subsection: curving} for notation) lifting the morphism \eqref{gr DQ}. We refer to \eqref{morphism: qcl} as the morphism of quasi-classical limit

\subsection{Obstruction to quantization}
The construction of the canonical Poisson connective structure with flat curving on the classical limit of a DQ-algebroid imposes restrictions on the class of the classical limit. We will say that a  twisted form $\mathcal{S}$ of $\mathcal{O}_{X/\mathcal{P}}$ admits a deformation along a Poisson bi-vector $\pi \in \Gamma(X;\bigwedge^2\mathcal{T}_{X/\mathcal{P}})$ if there exists a DQ-algebroid $\mathcal{C}$ with $\pi^\mathcal{C} = \pi$ and $\gr\mathcal{C}\cong\mathcal{S}$.

\begin{thm}\label{thm: obst and gemuese}
A twisted form $\mathcal{S}$ of $\mathcal{O}_{X/\mathcal{P}}$ admits a deformation along a Poisson bi-vector $\pi \in \Gamma(X;\bigwedge^2\mathcal{T}_{X/\mathcal{P}})$ \emph{only} if the class of $\mathcal{S}$ in $H^2(X;\mathcal{O}^\times_{X/\mathcal{P}})$ is in the image of the map
\begin{equation}\label{map on H2}
H^2(X;\mathcal{O}^\times_{X/\mathcal{P}} \to \Omega^1_\Pi \to \Omega^{2,cl}_\Pi ) \to H^2(X;\mathcal{O}^\times_{X/\mathcal{P}}) ,
\end{equation}
where $\Pi$ denotes the Lie algebroid associated with $\pi$.
\end{thm}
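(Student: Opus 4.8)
The plan is to deduce the statement directly from the morphism of quasi-classical limit \eqref{morphism: qcl}, which packages all the work of the preceding subsections; the theorem is then a purely formal consequence obtained by passing to $\pi_0$ of global sections. No analytic computation remains to be done, since the construction of the canonical $\Pi$-connective structure with flat curving is already encoded in \eqref{morphism: qcl}.

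First I would unwind the hypothesis. To say that $\mathcal{S}$ admits a deformation along $\pi$ is to produce a DQ-algebroid $\mathcal{C}$ with $\pi^\mathcal{C} = \pi$ and $\gr\mathcal{C}\cong\mathcal{S}$ (equivalently, $i\gr\mathcal{C}\cong i\mathcal{S}$ as $\mathcal{O}^\times_{X/\mathcal{P}}$-gerbes). The condition $\pi^\mathcal{C} = \pi$ says precisely that $\mathcal{C}$ is an object of the fiber $\DQ^\pi_{X/\mathcal{P}}(X)$ of \eqref{map: assoc poisson} over $\pi$, which is exactly the domain on which \eqref{morphism: qcl} is defined. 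Fixing the associated Poisson bi-vector is essential here, as the target complex depends on $\pi$ through the Lie algebroid $\Pi$.

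Next I would apply \eqref{morphism: qcl} to $\mathcal{C}$. Evaluating $\widetilde{\gr^\times}$ on $\mathcal{C}$ yields the pair $(\gr\mathcal{C}, \Conn^\mathcal{C})$, i.e. the $\mathcal{O}^\times_{X/\mathcal{P}}$-gerbe $i\gr\mathcal{C}$ equipped with its canonical $\Pi$-connective structure with flat curving, an object of the Picard $2$-stack $(\mathcal{O}^\times_{X/\mathcal{P}} \to \Omega^1_\Pi \to \Omega^{2,cl}_\Pi)[2]$ over $X$. Its equivalence class therefore lies in
\[
\pi_0\bigl(\mathcal{O}^\times_{X/\mathcal{P}} \to \Omega^1_\Pi \to \Omega^{2,cl}_\Pi\bigr)[2](X) \cong H^2\bigl(X;\mathcal{O}^\times_{X/\mathcal{P}} \to \Omega^1_\Pi \to \Omega^{2,cl}_\Pi\bigr).
\]
Since \eqref{morphism: qcl} lifts \eqref{gr DQ}, the forgetful morphism of $2$-stacks $(\mathcal{O}^\times_{X/\mathcal{P}} \to \Omega^1_\Pi \to \Omega^{2,cl}_\Pi)[2] \to \mathcal{O}^\times_{X/\mathcal{P}}[2]$ sends $(\gr\mathcal{C}, \Conn^\mathcal{C})$ to $\gr^\times\mathcal{C} = i\gr\mathcal{C}\cong\mathcal{S}$. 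On $\pi_0$ of global sections this forgetful morphism induces exactly the map \eqref{map on H2}; hence the class just constructed maps to $[\mathcal{S}]$, which therefore lies in the image of \eqref{map on H2}, as required.

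The only point demanding care — rather than a genuine obstacle — is the bookkeeping identification of the forgetful projection of these Picard $2$-stacks with the stated map \eqref{map on H2} on $H^2$, i.e. recognizing it as the map on hypercohomology induced by the evident truncation morphism of complexes. This is routine given the description of $\pi_0$ of the relevant $[2]$-stacks as hypercohomology established in Section \ref{section: algebroid stacks}, so no further argument is needed.
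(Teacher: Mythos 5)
Your proposal is correct and follows essentially the same route as the paper: apply the quasi-classical limit morphism \eqref{morphism: qcl} to the hypothesized DQ-algebroid $\mathcal{C}\in\DQ^\pi_{X/\mathcal{P}}(X)$, identify $\pi_0$ of the Picard $2$-stack $(\mathcal{O}^\times_{X/\mathcal{P}} \to \Omega^1_\Pi \to \Omega^{2,cl}_\Pi)[2]$ with the hypercohomology group, and observe that since \eqref{morphism: qcl} lifts \eqref{gr DQ}, the resulting class is a lift of $[\mathcal{S}]$ along \eqref{map on H2}, which is induced by the forgetful morphism of $2$-stacks (equivalently, the truncation of complexes). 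No gaps; this is the paper's argument, with the bookkeeping spelled out slightly more explicitly.
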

\begin{proof}
The map \eqref{map on H2} is induced by the map of complexes of sheaves
\[
(\mathcal{O}^\times_{X/\mathcal{P}} \to \Omega^1_\Pi \to \Omega^{2,cl}_\Pi) \to \mathcal{O}^\times_{X/\mathcal{P}} .
\]
The induced morphism of 2-stacks
\[
(\mathcal{O}^\times_{X/\mathcal{P}} \to \Omega^1_\Pi \to \Omega^{2,cl}_\Pi)[2] \to \mathcal{O}^\times_{X/\mathcal{P}}[2]
\]
is the functor of forgetting the $\Pi$-connective structure given by the $(\mathcal{S}, \kappa) \mapsto \mathcal{S}$.

If there exists a DQ-algebroid $\mathcal{C}$ with the associated Poisson bi-vector $\pi$ and $\gr^\times\mathcal{C}$ equivalent to $\mathcal{S}$, then the class of $\widetilde{\gr^\times}\mathcal{C}$ in $H^2(X;\mathcal{O}^\times_{X/\mathcal{P}} \to \Omega^1_\Pi \to \Omega^{2,cl}_\Pi )$ is a lift of the class of $\mathcal{S}$ in $H^2(X;\mathcal{O}^\times_{X/\mathcal{P}})$.
\end{proof}

The following corollary of Theorem \ref{thm: obst and gemuese} is well-known (see for example \cite{BGNT0}).
\begin{cor}\label{cor: symplectic dq}
A twisted form $\mathcal{S}$ of $\mathcal{O}_{X/\mathcal{P}}$ admits a symplectic deformation only if the class of $\mathcal{S}$ in $H^2(X;\mathcal{O}^\times_{X/\mathcal{P}})$ is in the image of the map $H^2(X;\mathbb{C}^\times) \to H^2(X;\mathcal{O}^\times_{X/\mathcal{P}})$.
\end{cor}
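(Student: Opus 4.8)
The plan is to deduce this from Theorem~\ref{thm: obst and gemuese} by computing, in the symplectic case, the cohomology group appearing as the source of the obstruction map \eqref{map on H2}. By definition a \emph{symplectic} deformation is a deformation along a Poisson bi-vector $\poisson$ whose adjoint $\widetilde{\poisson}\colon \Omega^1_{X/\mathcal{P}} \to \mathcal{T}_{X/\mathcal{P}}$ is an isomorphism. First I would observe that $\widetilde{\poisson}$ is then an isomorphism of Lie algebroids $\Pi \xrightarrow{\cong} \mathcal{T}_{X/\mathcal{P}}$, so that the induced isomorphism on exterior powers identifies $\Omega^\bullet_\Pi = \bigwedge^\bullet\mathcal{T}_{X/\mathcal{P}}$ with $\Omega^\bullet_{X/\mathcal{P}}$ and carries the Lichnerowicz--Poisson differential $d_\Pi = [\poisson,\cdot]$ to the de Rham differential $d$ (the standard fact that for a symplectic structure the two complexes agree), while carrying $d_\Pi\log$ to $d\log$. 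This yields an isomorphism of complexes of sheaves
\[
(\mathcal{O}^\times_{X/\mathcal{P}} \to \Omega^1_\Pi \to \Omega^{2,cl}_\Pi) \xrightarrow{\cong} (\mathcal{O}^\times_{X/\mathcal{P}} \xrightarrow{d\log} \Omega^1_{X/\mathcal{P}} \xrightarrow{d} \Omega^{2,cl}_{X/\mathcal{P}})
\]
commuting with the two projections to $\mathcal{O}^\times_{X/\mathcal{P}}$.

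Next I would show that the inclusion of locally constant invertible functions $\mathbb{C}^\times \hookrightarrow \mathcal{O}^\times_{X/\mathcal{P}}$ is a quasi-isomorphism onto the right-hand complex above. This is a computation of cohomology sheaves: in degree $0$ one has $\ker(d\log) = \ker(d\colon \mathcal{O}_{X/\mathcal{P}} \to \Omega^1_{X/\mathcal{P}})\cap\mathcal{O}^\times_{X/\mathcal{P}} = \mathbb{C}^\times$, while the vanishing in degrees $1$ and $2$ follows from the Poincar\'e lemma for $(\Omega^\bullet_{X/\mathcal{P}}, d)$. The latter is available because, as recorded in Subsection~\ref{subsection: calculus}, the inclusion $\Omega^\bullet_{X/\mathcal{P}} \hookrightarrow \Omega^\bullet_X$ is a quasi-isomorphism and $(\Omega^\bullet_X, d)$ resolves $\mathbb{C}$; concretely, a local closed $1$-form is $dg = d\log(\exp g)$ and a local closed $2$-form is exact. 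Taking hypercohomology produces an isomorphism $H^2(X;\mathbb{C}^\times) \xrightarrow{\cong} H^2(X;\mathcal{O}^\times_{X/\mathcal{P}} \to \Omega^1_\Pi \to \Omega^{2,cl}_\Pi)$.

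Finally, since the quasi-isomorphism is compatible with the projections to $\mathcal{O}^\times_{X/\mathcal{P}}$, this last isomorphism carries the obstruction map \eqref{map on H2} onto the map $H^2(X;\mathbb{C}^\times) \to H^2(X;\mathcal{O}^\times_{X/\mathcal{P}})$ induced by the sheaf inclusion $\mathbb{C}^\times \hookrightarrow \mathcal{O}^\times_{X/\mathcal{P}}$. Applying Theorem~\ref{thm: obst and gemuese} to a symplectic $\poisson$ then shows that $[\mathcal{S}]$ lies in the image of the latter map, which is exactly the assertion. The only step needing genuine care is the first one --- checking that $\widetilde{\poisson}$ simultaneously intertwines the two de Rham differentials and the two logarithmic derivations; the remaining ingredients are the Poincar\'e lemma and the functoriality of hypercohomology.
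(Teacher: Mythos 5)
Your proof is correct and takes essentially the same route as the paper's own argument: non-degeneracy of $\pi$ makes the anchor an isomorphism identifying $(\mathcal{O}^\times_{X/\mathcal{P}} \to \Omega^1_\Pi \to \Omega^{2,cl}_\Pi)$ with $(\mathcal{O}^\times_{X/\mathcal{P}} \to \Omega^1_{X/\mathcal{P}} \to \Omega^{2,cl}_{X/\mathcal{P}})$, the Poincar\'e lemma for $X/\mathcal{P}$ shows that $\mathbb{C}^\times$ includes quasi-isomorphically into the latter complex, and the conclusion follows from Theorem \ref{thm: obst and gemuese}. You merely make explicit the details the paper leaves implicit (the intertwining of $d_\Pi$ with $d$ and of $d_\Pi\log$ with $d\log$, the degreewise computation of cohomology sheaves, and compatibility with the projections to $\mathcal{O}^\times_{X/\mathcal{P}}$).
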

\begin{proof}
If the associated Poisson bi-vector is non-degenerate, then the anchor map $\widetilde{\pi}$ is an isomorphism and the complex $\mathcal{O}^\times_{X/\mathcal{P}} \to \Omega^1_\Pi \to \Omega^{2,cl}_\Pi$ is isomorphic to the the complex $\mathcal{O}^\times_{X/\mathcal{P}} \to \Omega^1_{X/\mathcal{P}} \to \Omega^{2,cl}_{X/\mathcal{P}}$. The Poincar\'e Lemma holds for $X/\mathcal{P}$, in other words, the map (inclusion of locally constant functions) $\mathbb{C}^\times_X \to (\mathcal{O}^\times_{X/\mathcal{P}} \to \Omega^1_{X/\mathcal{P}} \to \Omega^{2,cl}_{X/\mathcal{P}})$ is a quasi-isomorphism.
\end{proof}

\begin{remark}
In fact, as shown in \cite{BGNT0}, the converse of Corollary \ref{cor: symplectic dq} holds.
\end{remark}

\section{The quasi-classical limit and formality for algebroids}\label{section: formality}
In this section we formulate a conjecture relating the quasi-classical limit with the results of \cite{BGNT1} which describe the formal deformation theory of gerbes in terms of quasi-classical data.
For simplicity we will assume that the distribution $\mathcal{P}$ is trivial, i.e. we are dealing with a plain $C^\infty$ manifold.

\subsection{The de Rham class of an $\mathcal{O}^\times_X$-gerbe}
We recall the construction of a closed differential 3-form representing the class of an $\mathcal{O}^\times_X$-gerbe. See \cite{JLB} for further details.

Suppose that $\mathcal{S}$ is a twisted form of $\mathcal{O}_X$. Under the map $H^2(X;\mathcal{O}_X^\times) \xrightarrow{d\log} H^2(X;\Omega^{1,cl}_X) \cong H^3_{dR}(X)$ the equivalence class $[\mathcal{S}]\in H^2(X;\mathcal{O}_X^\times)$ is mapped to the class $[\mathcal{S}]_{dR}\in H^3_{dR}(X)$. We briefly recall the construction of a representative $H\in\Gamma(X;\Omega_X^{3,cl})$ of the class $[\mathcal{S}]_{dR}$.

The map $H^2(X;\mathcal{O}_X^\times \to \Omega^1_X \to \Omega^2_X) \to H^2(X;\mathcal{O}_X^\times)$ is surjective, in other words, every $\mathcal{O}_X^\times$-gerbe admits a connective structure with curving (a.k.a. a classical $B$-field). The de Rham differential gives rise to the map of complexes of sheaves $d \colon (\mathcal{O}_X^\times \to \Omega^1_X \to \Omega^2_X)[2] \to \Omega^{3,cl}_X$. The induced map on cohomology $d \colon H^2(X;\mathcal{O}_X^\times \to \Omega^1_X \to \Omega^2_X) \to \Gamma(X;\Omega^{3,cl}_X)$ corresponds to the map which associates to a gerbe with connective structure with curving  $(\mathcal{S}, \kappa)$ a closed 3-form $c(\kappa)$ called \emph{the curvature of the curving $\kappa$}. The class of $c(\kappa)$ in $H^3_{dR}(X)$ depends only on the equivalence class of $\mathcal{S}$ (and does not depend on the choice of the connective structure) and is denoted $[\mathcal{S}]_{dR}\in H^3_{dR}(X)$. 

The map $H^2(X;\mathcal{O}_X^\times) \to H^3_{dR}(X)$ given by $[\mathcal{S}] \mapsto [\mathcal{S}]_{dR}$ coincides with the composition $H^2(X;\mathcal{O}_X^\times) \to H^2(X;\Omega)X^{1,cl}) \cong H^3_{dR}(X)$ induced by the map of sheaves $ d\log \colon \mathcal{O}_X^\times \to \Omega^{1,cl}_X$.

\subsection{Deformations of twisted forms of $\mathcal{O}_X$ (\cite{BGNT1})}\label{subsection: BGNT}
Suppose that $\mathcal{S}$ is a twisted form of $\mathcal{O}_X$. Theorem 6.5 in conjunction with Remark 6.6 of \cite{BGNT1} imply that equivalence classes of DQ-algebroids with associated Poisson bi-vector $\pi$ and classical limit (equivalent to) $\mathcal{S}$ are in bijective correspondence with equivalence classes of pairs $(\pi_t, H)$, where
\begin{itemize}
\item $\pi_t = 0 + \pi_1 t + \pi_2 t^2 + \cdots \in \Gamma(X; \bigwedge^2\mathcal{T}_X\widehat{\otimes} t\mathbb{C}[[t]])$

\item $H \in \Gamma(X;\Omega^{3,cl}_X)$
\end{itemize}
satisfying
\begin{enumerate}
\item $\pi_1 = \pi$

\item $H$ is a representative of $[\mathcal{S}]_{dR}\in H^3_{dR}(X)$

\item $[\pi_t,\pi_t] = \widetilde{\pi_t}^{\wedge 3}(H)$, 
\end{enumerate}
where $\widetilde{\pi_t} \colon \Omega_X^1 \to \mathcal{T}_X \widehat{\otimes} t\mathbb{C}[[t]]$ is the adjoint of $\pi_t$.

\subsection{$\Pi$-connective structures from quasi-classical data}\label{subsection: Pi-conn from MC}
Suppose that $\pi$ is a Poisson bi-vector on $X$ with associated Lie algebroid denoted $\Pi$ and $(\mathcal{S},\nabla)$ is gerbe equipped with a connective structure $\Conn$ with curving whose curvature is equal to $H \in \Gamma(X;\Omega^{3,cl}_X)$.

Given $\pi_t \in \Gamma(X; \bigwedge^2\mathcal{T}_X \widehat{\otimes} t\mathbb{C}[[t]])$ such that the pair $(\pi_t, H)$ satisfies the conditions in \ref{subsection: BGNT} one obtains a canonical $\Pi$-connective structure with flat curving on $\mathcal{S}$ as follows.

To a locally defined object $L\in\mathcal{S}$ the connective structure $\Conn$ associates a $\Omega^1_X$-torsor $\Conn(L)$ equipped with the curvature map $c_L\colon \Conn(L) \to \Omega^2_X$. As $H$ is the curvature of the curving of $\nabla$, the composition $\Conn(L) \xrightarrow{c_L} \Omega^2_X \xrightarrow{d} \Omega^{3,cl}_X$ is constant and equal to $H$.

The $\Omega^1_X$-torsor $\Conn(L)$ gives rise to the $\Omega^1_\Pi$-torsor $\widetilde{\poisson}\Conn(L)$.  The map $(\widetilde{\poisson}(c_L)-\pi_2) \colon \widetilde{\poisson}\Conn(L) \to \Omega^2_\Pi$ (where $\pi_2\in\Omega^2_\Pi$ is viewed as a constant map) is a morphism of torsors compatible with the map of groups $d_\Pi \colon \Omega^1_\Pi \to \Omega^2_\Pi$.

The condition $[\pi_t,\pi_t] = \widetilde{\pi_t}^{\wedge 3}(H)$ implies that $[\pi,\pi_2] = \widetilde{\pi}^{\wedge 3}(H)$. Therefore, the composition $\widetilde{\pi}\Conn(L) \xrightarrow{\widetilde{\pi}(c_L)-\pi_2} \Omega^2_\Pi \xrightarrow{d_\Pi} \Omega^3_\Pi$ is equal to zero. Consequently, the map $\widetilde{\poisson}(c_L)-\pi_2$ takes values in $\Omega^{2,cl}_\Pi$ and, hence, the assignment $\mathcal{S}\ni L \mapsto (\widetilde{\poisson}\Conn(L), \widetilde{\poisson}(c_L)-\pi_2)$ defines a $\Pi$-connective structure with flat curving on $\mathcal{S}$.

\subsection{Quasi-classical limit and formality}
Suppose that $\mathcal{C}$ is a DQ-algebroid. Let $(\pi_t, H)$ be a pair representing the equivalence class of $\mathcal{C}$ under the bijection (see \ref{subsection: BGNT}) induced by \emph{a choice of a formality isomorphism}.
\begin{conj}
The quasi-classical limit $\widetilde{\gr^\times}\mathcal{C}$ is equivalent to $\gr^\times\mathcal{C}$ equipped with the $\Pi$-connective structure with flat curving deduced from $(\pi_t, H)$ as in \ref{subsection: Pi-conn from MC}. Moreover, this equivalence is independent of the choice of formality isomorphism.
\end{conj}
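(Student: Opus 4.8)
The plan is to prove the conjecture by reducing it to a second-order computation controlled by the formality quasi-isomorphism, and then checking naturality and gauge-independence. Both $\widetilde{\gr^\times}\mathcal{C}$ and the structure constructed in \ref{subsection: Pi-conn from MC} are objects of the Picard 2-stack $(\mathcal{O}^\times_X \to \Omega^1_\Pi \to \Omega^{2,cl}_\Pi)[2]$ lying over the \emph{same} underlying gerbe $\gr^\times\mathcal{C} \simeq \mathcal{S}$, so it suffices to produce a canonical equivalence between the two $\Pi$-connective structures with flat curving; equivalently, one must show that their difference, which defines a class in $H^1(X; \Omega^1_\Pi \to \Omega^{2,cl}_\Pi)$, vanishes canonically. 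First I would fix a representative $(\pi_t, H)$ as in \ref{subsection: BGNT} together with a classical connective structure $\Conn$ with curving of curvature $H$ on $\mathcal{S}$, and present both sides as assignments $\mathcal{S}\ni L \mapsto (\text{torsor}, \text{curving})$ into $(\Omega^1_\Pi \to \Omega^{2,cl}_\Pi)[1]$. The comparison then becomes a comparison, object by object, of the two resulting curving maps on the torsors $\Conn(L)$.

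The comparison is local, so I would pass to a contractible $U$ on which $\mathcal{C}$ is trivialized by an object $L$ with $\shEnd_\mathcal{C}(L) = (\mathcal{O}_U[[t]], \star)$ a special star product and on which $\mathcal{S}$ is trivial. Here the quasi-classical side is governed by the subprincipal curvature map $c\colon \Sigma_1(\shEnd_\mathcal{C}(L)) \to \Omega^{2,cl}_\Pi$ of \ref{subsection: Subprincipal symbols}, which by the proof of Lemma \ref{lemma: curvature divisible by t} is the antisymmetrization of the second-order bidifferential operator $P_2$ of $\star$. The heart of the argument is to express this antisymmetrization through the formality morphism $U = \sum_n U_n$: to second order one has $P_2 = U_1(\pi_2) + \tfrac12 U_2(\pi_1,\pi_1)$, so that $c(\phi)$ splits into the term $\pi_2$ (from the Hochschild--Kostant--Rosenberg map $U_1$ applied to $\pi_2$) plus the antisymmetrized ``wheel'' contribution $\tfrac12 U_2(\pi_1,\pi_1)$. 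I would then show that, under the identification of the classical connective structure of $\mathcal{S}$ with the $H$-part of the Maurer--Cartan element, the wheel contribution is precisely $\widetilde{\pi}(c_L)$ modulo $d_\Pi$-exact forms, so that $c(\phi)$ agrees with $\widetilde{\pi}(c_L) - \pi_2$ up to the coboundaries that are absorbed by the torsor structure. This is the step where the second Taylor coefficient $U_2$ of the (gerbe-twisted) formality morphism of \cite{BGNT1} enters, and it is the \emph{main obstacle}: one must track how the twisting by $H$ feeds into $U_2(\pi_1,\pi_1)$ and verify that the cross-terms assemble into exactly the pushforward of the classical curvature $c_L$ used in \ref{subsection: Pi-conn from MC}, rather than into some a priori different $2$-form.

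Having matched the two curving data locally, I would glue. Both constructions are compatible with the transition isomorphisms of the gerbe --- for the quasi-classical side this is Proposition \ref{prop: sigma one composition} together with Lemma \ref{lemma: sp std sec T-torsor}, part (\ref{lemma: sp std sec T-torsor.3}), while for the other side it is the functoriality of the pushforward $\widetilde{\pi}\Conn(-)$ --- so once the local identity is established the local equivalences are forced to agree on overlaps, yielding a global equivalence of $\Pi$-connective structures with flat curving.

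Finally, for independence of the formality isomorphism, I would argue that two choices of formality morphism are related by an $L_\infty$-homotopy, which moves the representative $(\pi_t, H)$ within its equivalence class, i.e. by an $\Omega^1_X\widehat{\otimes} t\mathbb{C}[[t]]$-valued transformation acting on $\pi_t$ together with an exact adjustment of $H$. It then suffices to check that the construction of \ref{subsection: Pi-conn from MC} descends to equivalence classes of pairs $(\pi_t, H)$ --- concretely, that gauge-equivalent Maurer--Cartan elements yield equivalent $\Pi$-connective structures with flat curving. Since this equivalence relation is already the one underlying the bijection of \cite{BGNT1}, the task reduces to verifying that the assignment $L \mapsto (\widetilde{\pi}\Conn(L),\, \widetilde{\pi}(c_L) - \pi_2)$ changes only by a natural isomorphism under such a gauge transformation, which would complete the argument.
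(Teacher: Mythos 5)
The first thing to note is that the paper does \emph{not} prove this statement: it is stated as a Conjecture, and the introduction says explicitly that Section \ref{section: formality} ``concludes with a conjecture.'' There is therefore no proof in the paper to compare yours against, and your proposal has to stand on its own. It does not: it is a strategy outline whose central step is deferred rather than carried out. The heart of the matter --- showing that the antisymmetrization of $P_2$, written via the formality morphism as $U_1(\pi_2)+\tfrac12 U_2(\pi_1,\pi_1)$, agrees with $\widetilde{\pi}(c_L)-\pi_2$ up to the appropriate coboundaries --- is exactly the content of the conjecture, and at precisely this point you write ``I would then show that\ldots'' and label it the main obstacle. Nothing in the proposal actually engages with the structure of the formality morphism of \cite{BGNT1}: the bijection of \ref{subsection: BGNT} is not given by a naive local Kontsevich expansion but by a globalized, gerbe-twisted formality (Theorem 6.5 and Remark 6.6 of loc.\ cit.), in which $H$ enters through the twisting of the controlling DGLA and the locally defined star-products are glued through bimodules; your identity $P_2 = U_1(\pi_2)+\tfrac12 U_2(\pi_1,\pi_1)$ is the untwisted local formula, and the ``cross-terms'' you mention are precisely what one has no control over without unwinding that machinery. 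In effect the proposal reduces the conjecture to itself.

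Two further points would need repair even if the local computation were done. First, the gluing step is too quick: knowing that both constructions are compatible with the transition isomorphisms of the gerbe does not force the locally constructed equivalences to agree on overlaps; their discrepancies form a \v{C}ech $1$-cocycle with values in $\Omega^{1,cl}_\Pi$ (the automorphisms of objects of $(\Omega^1_\Pi \to \Omega^{2,cl}_\Pi)[1]$), and one must either trivialize this cocycle canonically or construct the equivalence globally from the start. This is also where ``canonical'' (needed for the second assertion of the conjecture) versus ``exists'' (vanishing of a class in $H^1(X;\Omega^1_\Pi \to \Omega^{2,cl}_\Pi)$) matters; your opening paragraph conflates the two. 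Second, for independence of the formality isomorphism it is not enough that the construction of \ref{subsection: Pi-conn from MC} descends to gauge-equivalence classes of pairs $(\pi_t,H)$: one must show that the comparison equivalence built in the first part intertwines the equivalences of $\Pi$-connective structures induced by the $L_\infty$-homotopy relating the two formality morphisms, which again requires explicit control of that homotopy and not merely its existence.
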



\end{document}